\documentclass[12pt]{amsart}
   \newtheorem{thm}{Theorem}[section]
  \newtheorem{prop}[thm]{Proposition}
  \newtheorem{lem}[thm]{Lemma}
  \newtheorem{cor}[thm]{Corollary}
  \theoremstyle{definition}
 \theoremstyle{remark}
 \newtheorem{rmk}[thm]{Remark}
  
 \newtheorem{examp}[thm]{Example}
 
 \newtheorem{claim}[thm]{Claim}

 \usepackage{amsfonts}
 \usepackage{amsmath}
 \usepackage{calligra}
 \usepackage{amssymb}
 \usepackage{verbatim}
 \usepackage[colorlinks]{hyperref}
 \usepackage[T1]{fontenc}
 \usepackage{mathrsfs}
\usepackage{color}
 \usepackage[utf8]{inputenc}
 \usepackage[makeroom]{cancel}
 \usepackage{tikz-cd}
 \input xy
 \xyoption{all}
 
   \usepackage{mathtools}

 \newcommand{\e}{\mathfrak{e}}
 
 \newcommand{\g}{\mathfrak{g}}
 \newcommand{\h}{\mathfrak{h}}
 \renewcommand{\k}{{\mathfrak k}}
 \renewcommand{\l}{\mathfrak l}
 
 \newcommand{\p}{\mathfrak p}
 \newcommand{\q}{\mathfrak q}
 \newcommand{\s}{\mathfrak s}
 \newcommand{\z}{\mathfrak z}
 \renewcommand{\t}{{\mathfrak t}}
 \renewcommand{\u}{{\mathfrak u}}
 
 \newcommand{\C}{{\mathbb C}}

 \newcommand{\R}{{\mathbb R}}
 \newcommand{\Z}{{\mathbb Z}}

 \numberwithin{equation}{section}
 \DeclareFontFamily{U}{mathx}{\hyphenchar\font45}
 \DeclareFontShape{U}{mathx}{m}{n}{
       <5> <6> <7> <8> <9> <10>
       <10.95> <12> <14.4> <17.28> <20.74> <24.88>
       mathx10
       }{}
 \DeclareSymbolFont{mathx}{U}{mathx}{m}{n}
 \DeclareFontSubstitution{U}{mathx}{m}{n}
 \DeclareMathAccent{\widecheck}{0}{mathx}{"71}
 \DeclareMathAccent{\wideparen}{0}{mathx}{"75}

\begin{document}
 \title{ Symmetry breaking differential operators and Discrete Series}
\author{  Bent {\O}rsted,   Jorge A.  Vargas}
\thanks{Partially supported by Aarhus University  (Denmark),  CONICET
(Argentina)  }
\date{\today }
 \keywords{Intertwining operators, Admissible restriction, Branching laws, Holomorphic Discrete Series}
\subjclass[2010]{Primary 22E46; Secondary 17B10}
 \address{ Mathematics Department, Aarhus University, Denmark; FAMAF-CIEM, Ciudad Universitaria, 5000 C\'ordoba, Argentine}
\email{orsted@imf.au.dk,  jorge.a.vargas@unc.edu.ar}

\begin{abstract}
For a semisimple Lie group $G$ satisfying the
equal rank condition, the most basic family of unitary
irreducible representations is the Discrete Series found
by Harish-Chandra. In this paper, we study the structure of symmetry breaking operators  for Discrete Series when restricted to a subgroup
$H$ of the same type by combining classical results
with recent work of T. Kobayashi, Nakahama and Pevzner. This we do by using reproducing
kernels for the representations and our previous duality principle in order to find some explicit
details on the nature of the differential operators representing symmetry breaking operators,
in particular to what extent  they are given by differentiations in normal directions to the $H$-orbit in $G/K$.
\end{abstract}
\maketitle
\markboth{{\O}rsted- Vargas}{Diff. operators and Discrete Series}

\tableofcontents
\section{Introduction}

 Differential operators play several key roles in representation theory of Lie groups and in the study of
symmetries. In this paper we are motivated by and interested in the setting of holomorphic vector bundles
over complex homogeneous spaces $X$ and differential operators acting on their sections. Here $X$
could be a Hermitian symmetric space, thought of as a bounded symmetric domain in an $n$-dimensional
complex linear space, so that $X = G/K$ for a semisimple Lie group $G$. We also consider a closed subgroup
$H$ of $G$ of the same type and its orbit $Y$ through the origin in $X$, so that we have the pair of symmetric domains
$$Y = H/L \subset X = G/K. $$
We wish to find differential operators $D$ acting on (sections over) $X$ that exhibit symmetries with respect to $H$,
namely in the sense that composing with restriction to $Y$ yields an $H$-equivariant linear operator, a
so-called symmetry breaking operator. Note here the key result Proposition~\ref{prop:diffarehomo} to the effect that all differential
symmetry breaking operators are in a precise sense homogeneous.

Based on our previous work on branching theory we study the nature in detail of such differential operators $D$,
in particular to what extent they can be thought of as purely tangential  to $Y$, normal to $Y$, or a mixture.
It turns out that for low-order operators the detailed structure, using root systems, of the domains can
be effectively applied to such questions. A key notion is here is for the case of $H$ being  a symmetric subgroup
of $G$ where we consider a certain natural dual symmetric subgroup $H_0$.

Some of the basic results are of an algebraic nature, perhaps of independent interest, and also giving insight
about the geometry of the pair of symmetric domains.

 More precisely, in this paper we continue our study of the program of T. Kobayashi on symmetry breaking; our aim is to
make explicit the construction of differential operators leading to symmetry breaking in the
setting of discrete series representations. We focus mainly on holomorphic discrete series representations,
as already here there are many structural facts about their realization available; we combine some geometric
results about the imbedding of one bounded symmetric domain into another one, such as coordinates that
are partly normal and partly tangential, and also consider the algebraic relations between the respective root systems.
The main results are allowing finding in particular purely normal differential symmetry breaking operators.

This paper consists of several sections, in the   section 4 we elaborate on the algebraic counterpart of symmetry breaking operators represented by differential operators, in    section 3 we apply the results obtained in   sections 4 to the actual study of the symmetry breaking operators. In section 2, we present  background material.

An aim of this paper   is to continue the initial work of Kobayashi-Pevzner \cite{KP1}\cite{KP2}\cite{KP3}  to understand the
structure of symmetry breaking operators in the realm of
      $H$-admissible Discrete Series. To describe our results, we present some considerations on the structure of symmetry breaking operators. To begin with, we recall that Discrete Series for $G$ (resp. $H$) admits a realization as space of  square integrable smooth sections $H^2(G,\tau)$ of homogeneous vector bundles   $G\times_\tau W \rightarrow G/K$ (resp. $ H\times_\sigma Z \rightarrow H/L$) attached to the corresponding lowest $K$-type  (resp. $L$-type). We fix   a symmetry breaking operator $S:H^2(G,\tau)\mapsto H^2(H,\sigma)$, represented by the kernel $K_S :G\times H \rightarrow Hom_\C( W, Z)$ and recall  the subspace $Z_S:=Image(Z\ni z \mapsto K_{S^*}(e, \cdot) (z)=K_S(\cdot, e)^\star z \in H^2(G,\tau))$, $Z_S$ is a $L$-irreducible subspace \ref{sub:zs}, \cite{OV3}. Whence, either $Z_S \cap \mathcal U(\h_0)W=\{0\}$ or $Z_S \subset \mathcal U(\h_0)W$. In \cite{OV3}, under the hypothesis of the restriction of the representation $H^2(G,\tau)$ is $H$-admissible, it is shown that in the latter case $S$ is represented by a normal derivative differential operator, whereas in the former case $S$ is represented by a differential operator that never will be a normal derivative differential operator. To be more precise, we write

    $Hom_H(V,V_\sigma) =\{ S: Z_S \subset \mathcal U(\h_0)W\}\cup \{ S: Z_S \cap \mathcal U(\h_0)W=\{0\}\}$.

  After ignoring the zero operator,  this is a disjoint union,  the first subset is the one that contains the symmetry breaking operators that are represented by normal derivatives operators and the second subset is its complement. To continue, we recall the subspace   $\mathcal L_{W,H}^c \subset H^2(G,\tau)_{K-fin}$ determinate by the linear span of the subspaces $Z_S$ where $(\sigma, Z)$ runs over the set of lowest $L$-types of the irreducible factors of the restriction of $H^2(G,\tau)$ to $H$, and, $S$ runs over the totality of symmetry breaking operators for $H^2(G,\tau)$.

   Roughly speaking,  the first subset in the decomposition of above consists of the lowest $L$-type subspaces contained in     $ \mathcal L_{W,H}^c\cap \mathcal U(\h_0)W  $ and measures the "quantity" of symmetry breaking operators   represented by normal derivatives operators, whereas, $ \mathcal L_{W,H}^c \backslash (\mathcal L_{W,H}^c\cap \mathcal U(\h_0)W)$,  "measures"  the totality  of "non normal" derivative symmetry breaking operators. Note here formula \ref{eq:sasdfo} which links a symmetry breaking operator to a differential operator representing it and  \ref{sub:tech}, \ref{sub:disjoint}, Proposition~\ref{prop:everynorfortriple}.

  Obviously,  if  $H^2(H,\sigma)$ has multiplicity one in $H^2(G,\tau)$, the symmetry breaking operators in $Hom_H(V_\tau^G,V_\sigma^H)$ are either normal derivatives differential operators or not.\cite{KP1}, \cite{KP2} has constructed examples of both possibilities. For a scalar holomorphic  representation $\tau$ and $(\g,\h)$ a real rank one pair,  in \cite[Theorem 5.3]{KP2} it is shown: Either every   nontrivial symmetry breaking operators is represented by normal derivative operators or none is represented by normal derivative operator's. We can complement  their result, for either scalar or non scalar holomorphic representations in Proposition~\ref{prop:equal},  Proposition~\ref{prop:sonn-1} and  Proposition~\ref{prop:nu1nonzero} Table B.

 Before to continue, we recall a result in \cite{OV2}: If some nonzero symmetry breaking operator from $H^2(G,\tau)$ is represented by a differential operator, then the restriction   of $H^2(G,\tau)$ to $H$ is discretely decomposable. Besides, if every symmetry breaking operator is represented via differential operators, then, the restriction is $H$-admissible.  We will consider  an answer to the following:  if   $H^2(H,\sigma)$ has multiplicity  larger than one  in $H^2(G,\tau)$, the symmetry breaking operators attached to this subrepresentation are all  normal derivatives differential operators or all are not or a mix. In the following paragraphs we outline  our partial answer to this question for the case of holomorphic representations.

We always have $ W \subset \mathcal L_{W,H}^c \cap \mathcal U(\h_0)W $. In Proposition~\ref{prop:allsbarenormal} we show a necessary and sufficient condition for the statement: every symmetry breaking operator is normal.   In \ref{sub:degree}  we define the order of a symmetry breaking operator, and we show in Proposition~\ref{prop:everynorfortriple} the fact: For a given nonscalar lowest $K$-type,   if every symmetry breaking operator of a fixed order bigger or equal to two is normal derivative, then the triple condition $[[\p_\h^+, \p_{\h_0}^-], \p_{\h_0}^-]=\{0\}$ holds. As a partial converse, we show that the validity of triple bracket equal to zero,  allows us to construct nonzero normal derivative operators. For this, we apply Engel's Theorem to a convenient nilpotent Lie subalgebra of $\k_\C$ and a non scalar representation $\tau$, see Proposition~\ref{prop:existencenormal2}.
For the treatment of first order operators we do not have  to consider  the triple bracket condition, in Proposition~\ref{prop:nonormal}, we show: whenever,    every first order symmetry breaking operator is normal, then, generally the lowest $K$-type is one dimensional with a few exceptions. In order to produce first order normal operators, once again, we appeal to Engel's Theorem, except  for a few very simple examples, we are not able to complete a picture on the first order normal operators, however, see Proposition~\ref{prop:first}. We present a conjecture on the structure of first order normal operators   in \ref{sub:w0irred} paragraph b).

    A consequence of   example \ref{exa:example1}, \ref{exa:example2} is that the following inclusions  might be proper,
  $ \{0\}\subseteq   \mathcal L_{W,H}^c \cap \mathcal V^{(1)} \cap \mathcal U(\h_0)W \subseteq  \mathcal L_{W,H}^c\cap \mathcal V^{(1)}  $. Therefore, we have examples where the subset of first order normal differential operators is a proper subset of the set of first order symmetry breaking operators.   However, in general, we have not been able to elucidate on the nature of the inclusions $\{0\} \subseteq Hom_H(V_\sigma^H, V_\tau^G)_{normal\,diff}\subseteq Hom_H(V_\sigma^H, V_\tau^G)_{diff}=Hom_H(V_\sigma^H, V_\tau^G)$. A result on the properness of the first inclusion is shown in Proposition~\ref{prop:nu1nonzero}.  We finally point to the recent authoritative paper on admissibility \cite{Os}, indeed we hope to extend our
results from the holomorphic to a more general setting of admissible representations.

\section{Background and notations}\label{sec:notation} This paper, essentially deals with holomorphic Discrete Series representations, the aim of this section is to set up the hypothesis and notation needed in order to state and show the results. To begin with $G$ (resp $H$) denotes a {\it simple},  connected, noncompact, matrix Lie group (resp. $H$ is a connected reductive group) so that $H$ is a closed subgroup of $G$. We fix maximal compact subgroups $K,L$ of $G$, $H$  and maximal torus $T\subset K$, $U\subset L$,  satisfying $K\cap H=L, T\cap H=U$. Henceforth, we assume both $G/K$, (resp.$ H/L$) carries a $G$-invariant (resp. $H$-invariant) holomorphic structure equivalent to respective bounded symmetric domains $\mathcal D_G$,   $\mathcal D_H$. Thus, $T$ as well $U$ are compact Cartan subgroups of respectively $G,H$. Furthermore, we assume
$$ The \, inclusion \, H/L \hookrightarrow G/K \, is \,a \,holomorphic\, map.$$
Our last   assumption is: $(G,H)$ is a {\it symmetric pair}, that is, there exists an involution $\sigma$ of $G$ so that the connected component, at the identity, of the fix points of $\sigma$ is $H$. The maximal compact subgroup $K$ gives rise to a Cartan involution $\theta$ so that $K$ is the set of fix points of $\theta$,  and $\theta$ commutes with $\sigma$. The Lie algebra of a Lie group is  denoted by the corresponding lower case German letter.  To avoid notation, the complexification of  the Lie algebra of a Lie group is  also denoted by the corresponding German letter without any subscript or sometimes with the subscript $\C$.  The same criteria will be applied to a vector space.     $V^\star $ denotes the dual space to a vector space $V.$ The Cartan decomposition associated to the Cartan involution  is denoted by $\mathfrak g=\mathfrak k +\mathfrak p_\g.$   We have similar decomposition for $\h=\mathfrak l +\p_\h=\k \cap \h + \p \cap \h$. The involution $\sigma$ yields the decomposition $\g=\h+\q$. Here, $\q=\{ x \in \g : \sigma(X)=-X\}$. Since $\sigma \theta$ is again an involution of $G$, we consider the subalgebra $\h_0 :=\{X \in \g: \sigma \theta(X)=X  \}= \l +\p_\g \cap \h_0=\l +\p_{\h_0}=\k \cap \h + \q \cap \p_\g$. $(\g, \h_0)$ is called the associated pair to $(\g, \h)$. The hypothesis $(G,K)$, as well as $(H,L)$, is a holomorphic pair, and the hypothesis $H/L \rightarrow G/K$ is a holomorphic map   yields:  the splitting  into $Ad(K)$-invariant subspaces $\p_\g= \p_\g^+ \oplus \p_\g^-$  (resp  into $Ad(L)$-invariant subspaces $\p_\h= \p_\h^+ \oplus \p_\h^-$, $\p_{\h_0}= \p_{\h_0}^+ \oplus \p_{\h_0}^-$) so that
$\p_\cdot^-$ $ =\overline{\p_\cdot^+}    $, (here $ x \rightarrow \bar{x}$ denotes de conjugation of $\g_\C$ with respect to the real form $\g$ and $\cdot$ represents the algebras $\g, \h, \h_0$). Besides, the Lie bracket $[\p_\cdot^+, \p_\cdot^+]=\{0\}$ and $\p_\g^+= \p_\h^+ \oplus \p_{\h_0}^+$. Our hypothesis of $G$ simple and holomorphicity yields the center of $\k$, $\z_\k$, is contained in $\l$, for a proof \cite[3.6, Table 3.6.1]{Kob9}.

Let $\Phi(\mathfrak g,\mathfrak t)=\Phi_\g $  denote the root system attached to the Cartan subalgebra $\mathfrak t.$ Hence, $\Phi(\mathfrak g,\mathfrak t) =\Phi_c(\mathfrak g, \mathfrak t) \cup \Phi_n (\mathfrak g, \mathfrak t) =\Phi_c \cup \Phi_n(\g)=  \Phi_c \cup \Phi_n$ splits up as the union the set of compact roots and the set of noncompact roots. We have similar root decomposition for $\h, \h_0$. Our hypothesis implies the existence of a system of positive roots $\Psi=\Psi_\g  \subset \Phi(\g,\t) $ so that $\p_\g^+$ is equal to the sum of root spaces corresponding to each root in $\Psi \cap \Phi_n(\g)$. Similarly, we have a system of positive roots $\Psi_\h$ (resp. $\Psi_{\h_0}$) in $\Phi(\h,\u)$ (resp. in $\Phi(\h_0, \u)$). When $U=T$, our hypotheses force $\Psi_\h =\Psi_\g \cap \Phi(\h,\t)$, and $\Psi_{\h_0} =\Psi_\g \cap \Phi(\h_0,\t)$. For $U\not= T$, latter on, we will write a description of $\Psi_\h, \Psi_{\h_0}$ from $\Psi_\g$.

We have the $Ad(L)$-invariant decomposition $\k_\C  =\l_\C \oplus  (\q_\C \cap \k_\C)$, whence, we may consider the set of weights $\Psi(\q_\C \cap \k_\C),\u) \subset \u_\C^\star $   of $\u$ in $(\q_\C \cap \k_\C)$, it readily follows that any of this weight  is the restriction of a root in $\Phi(\g,\t)$. Thus, $\Psi_\g$ determinate a subset $\Psi(\q_\C \cap \k_\C),\u) \subset \u_\C^\star $  of those weights that are restriction of roots in $\Psi_\g$, we denote by $(\q_\C \cap \k_\C)^\pm$ the sum of the weights spaces for weights in  $\pm \Psi(\q_\C \cap \k_\C),\u)$    and we have the decomposition $\q_\C \cap \k_\C= (\q_\C \cap \k_\C)^+\oplus (\q_\C \cap \k_\C)^-$. When $\u=\t$ each weight is just a root in $\Phi(\g, \t)$ and each $(\q_\C \cap \k_\C)^\pm$ is a sum of root spaces.  Even though $(K,L)$ is a symmetric pair,   it   follows from the classification of the pairs $(G,H)$ \cite{Kob9}\cite{KOadv} that $(\q_\C \cap \k_\C)^\pm$  is  not always $Ad(L)$-invariant, whence,  $(K,L)$ is not always  a hermitian pair. Our hypotheses and, in particular,  $\g$ is a simple Lie algebra, imply the center of $\k$, denoted by $\z_\k$ is contained in $\l$ \cite[Section 3.6]{Kob9}. Hence,  $(\q_\C \cap \k_\C)$ is a subset of the semisimple factor $\k_{ss}$ of $\k$, whence,  we conclude that when $\g$ is a simple Lie algebra we have both $(\q_\C \cap \k_\C)^\pm$ is an abelian  Lie algebra consisting of nilpotent elements of $\k_{ss}$. This fact,   let as apply Engel's theorem.

\subsubsection{Holomorphic Discrete Series}\label{sub:holds} A realization for the holomorphic Discrete Series representation of a group $G$ is in space of holomorphic functions from $\mathcal D_G$ into its lowest $K$-type $(\tau, W)$. To be more precise, in the following,    we recall the standard description
of a symmetric complex domain realization of the holomorphic Discrete Series.   (reference for details are \cite{FOO}, \cite{JV}, \cite[XII.5]{Neeb})

 $G$ a semisimple Lie group and a maximal compact subgroup $K$ of $G$.

 $\bigl(G,H=(G^\sigma)_0\bigr)$ is a symmetric pair.

 $H_0:=(G^{\sigma \theta})_0$ associated subgroup to $H$.

 $L=K\cap H$  a maximal compact subgroup of $H$.

 $H/L \rightarrow G/K$   a holomorphic embedding.

 $(L_\cdot^\tau ,V_\tau)=(L_\cdot^\tau ,V_\tau^G)$ is a holomorphic Discrete Series for $G$ of lowest $K$-type $(\tau,W)$, realized as holomorphic $W$-valued functions in the bounded symmetric domain $\mathcal D\cong G/K$.      The action, $L_\cdot^\tau $ of $\mathfrak g$,  is recalled bellow.

 $(L_\cdot^\sigma ,V_\sigma)=(L_\cdot^\sigma ,V_\sigma^H )$ is a holomorphic Discrete Series for $H$ of lowest $L$-type $(\sigma, Z)$, realized as holomorphic $Z$-valued functions in the bounded symmetric domain $\mathcal D_\mathfrak h \cong H/L$. We sometimes refer to these simply as holomorphic representations.

Next, we recall the definition of {\it holomorphic embedding}. First of all, the symmetric space $G/K$ ($G$ simple) admits a $G$-invariant holomorphic structure if and only if the center of $K$ is one dimensional torus.
Similarly, for $H/L$, $H$ simple. The natural inclusion $H/L\rightarrow G/K$ is a {\it holomorphic embedding} if both symmetric spaces admits an ($H$-invariant) $G$-invariant complex structure and
the natural inclusion is holomorphic. The inclusion is holomorphic is equivalent to: the holomorphic tangent bundle of $H/L$ is a subset of the holomorphic tangent bundle for $G/K$. The   inclusion $H/L\rightarrow G/K$ being holomorphic, and $\g$ a simple Lie algebra, implies the center of $K$ is contained in the center of $L$. For a proof \cite[Table 2.1.3]{Kob9} and references therein.

We recall   $ T$ is a maximal torus of $K$ so that $U=T\cap H$ is a maximal torus of $L$.  $o :=$the coset $eK$. The above defined holomorphic system  $\Psi$   in $\Phi(\mathfrak g,\mathfrak t)$, $\Psi_\h, \Psi_{\h_0}$ holomorphic systems in $\Phi(\mathfrak h,\mathfrak u), \Phi(\mathfrak h_0,\mathfrak u)$   so that
 $\sum_{\beta \in \Psi^n} \mathfrak g_\beta:=\mathfrak p^+=\mathfrak p_\mathfrak g^+= \mathfrak p_{\mathfrak h}^+ + \mathfrak p_{\mathfrak h_0}^+$ are isomorphic to the respective   holomorphic tangent spaces at $o$. Since $\k_\C +\p^+$ is a parabolic subgroup of the complexification for $G$, we have the decomposition $G\subset P_+K_\mathbb C P_{-}$, and we have the Harish-Chandra realization as bounded symmetric domain  $G/K \cong \mathcal D\subset \mathfrak p^+, H/L \cong \mathcal D_\mathfrak h \subset \mathfrak p_\mathfrak h^+$, $ H_0/L \cong \mathcal D_{\mathfrak h_0} \subset \mathfrak p_{\mathfrak h_0}^+$. For this, we recall the triangular  decomposition

 \phantom{xxxx} $P_+K_\mathbb C P_{-} \ni x=exp (x_+) x_0 exp (x_-), x_\pm \in \mathfrak p^\pm, x_0 \in K_\mathbb C$.\\ Then, the Harish-Chandra embeddings are:  $G/K \ni xK \mapsto  x_+\in \mathcal D\subset \mathfrak p^+$,   $H/L \ni hL   \mapsto  h_+\in \mathcal D_\mathfrak h \subset \mathcal D \subset  \mathfrak p^+$, $H_0/L \ni hL   \mapsto  h_+\in \mathcal D_{\mathfrak h_0} \subset \mathcal D $.

   The cocycle $c_\tau$ and the reproducing kernel $K_\tau^c$ that defines the Hilbert structure and the representation on  $V_\tau$ are:  for $g\in G, z,w \in \mathcal D$,
 \begin{equation} \label{eq:ctauktauhol} c_\tau(g,z)=\tau ((g exp z)_0),\,\, K_\tau^c (w,z)=\tau  (((exp  \bar w)^{-1}exp z)_0)^{-1}. \end{equation}
Here, we take the conjugation with respect to the real form $\mathfrak g$.

 We consider the space $L_\tau^2 (\mathcal D,W):=L_{c_\tau}^2(\mathcal D,W)$ and the subspace
 \begin{multline} \label{eq:vtau} V_\tau :=V_\tau^G= \{ f\in  \mathcal O(\mathcal D,W) : \\ \int_\mathcal D (K_\tau^c (w,w) f(w), f(w))_W dm_{G/K} (w) <\infty \}.  \end{multline}
 From now on, we assume the space $V_\tau^G$ is nonzero.

The action $L_g^\tau :=\pi_{c_\tau}(g), g\in G$ in $\mathcal O(\mathcal D,W)$ is defined    by the formula

\phantom{xxxx}$L_g^\tau(f)(z)=\tau ( (g^{-1}exp z)_0)^{-1} f( (g^{-1}exp z)_+), g\in G, z\in \mathfrak p^+$.

 Then, \cite{Neeb}, it can be shown that $(L_\cdot^\tau, V_\tau)$ is a unitary, irreducible representation for $G$. Due that any matrix coefficient of $(L_\cdot^\tau, V_\tau)$ is square integrable for the Haar measure on $G$, henceforth, we refer to:

\smallskip
 \phantom{xxxxxxx}$(L_\cdot^\tau, V_\tau)$ is {\it a holomorphic Discrete Series} for $G$.

\smallskip
 We recall    the subspace $(V_\tau)_{K-fin}$ of $K$-finite vectors in $V_\tau$ is \begin{equation}\label{eq:1} (V_\tau)_{K-fin}=V_{K-fin}=\mathcal P(\mathfrak p^+,W)  \cong \mathcal U(\mathfrak g)\otimes_{\mathcal U(\mathfrak k +\mathfrak p^+)} W \cong S(\mathfrak p^-)\otimes W \end{equation}

 Here, we have identified $W$ with the subspace of constant functions.

 The isomorphisms in \ref{eq:1} are $L_D^\tau (w)\leftarrow [D\otimes w] \leftarrow D\otimes w$.
The first isomorphism is a $\g$-map, whereas the second is a $K$-isomorphism.
 Here, we use that $\mathfrak p^\pm$ is an abelian Lie algebra, and hence, the symmetrization map may be thought as the identity map. \ref{eq:1} reflects that the Harish-Chandra module of a holomorphic Discrete Series representation is equivalent to a Verma module determinate by the parabolic subalgebra $\k_\C +\p_\g^+$, the inducing representation is the lowest $K$-type representation extended by zero to the nilpotent $\p_\g^+$.

  \subsubsection{}\label{sub:jv} In  \cite{JV} \cite{FOO}  they have computed the action of $\mathfrak g$ (resp.$K$) in $\mathcal O(\mathcal D,W)$. For this we define $(\delta(x)f)(v)=\lim_{t\rightarrow 0} \frac{ f(v+tx)-f(v)}{t}$, then, for $p\in \mathcal O(\mathcal D,W)$,

 and for $x\in \mathfrak k, L_x^\tau(p)(v)=\tau (x)(p(v))-(\delta([x,v]) p)(v)$,

 and for $x \in \mathfrak p^+, L_x^\tau(p)(v)=-(\delta(x) p)(v)$,

 and for $x \in \mathfrak p^-, L_x^\tau(p)(v)=\tau([x,v])(p(v))-\frac12(\delta([[x,v],v]) p)(v)$,

 and for $k \in K, L_k^\tau (p)(v)=\tau(k)(p(k^{-1}v))$.

We set $V_\tau^{\mathfrak p_\g^+}=\{p \in V_{K-fin}: L_x^\tau(p)=\delta(x)p=0, \forall x \in \mathfrak p_\mathfrak g^+ \}$. Similarly, for $\p_\h^+$  we define $V_\sigma^{\p_\h^+}$, $V_\tau^{\mathfrak p_{\h}^+ }$.
 It readily follows that \begin{equation*}V_\tau^{\mathfrak p^+}=W , \,\,\, V_\tau^{\mathfrak p_\mathfrak h^+}=\{p \in V_{K-fin}: \delta(x)p=0, \forall x \in \mathfrak p_\mathfrak h^+ \}=\mathcal P(\mathfrak p_{\mathfrak h_0}^+,W).\end{equation*}

 \smallskip
 The hypothesis, the inclusion $H/L \rightarrow G/K$ is a holomorphic embedding and $V_\tau$ is a holomorphic Discrete Series  representation  yields that the restriction of $(L_\cdot^\tau , V_\tau)$ to $H$ is an $H$-admissible representation and the  totality of irreducible factors are holomorphic Discrete Series for $H$ for a reference \cite{Kob9}. Therefore, after we write $$res_H(V_\tau)=\oplus_{\mu \in Spec(res_H(V_\tau))} V_\tau[V_\mu^H],$$  the subspace $\mathcal L_{W,H}^c$  defined in \cite{Va}\cite{OV3},  equal to the sum   of the lowest $L$-type's of the totality of irreducible $H$-factors of $res_H(V_\tau)$, is equal to $\mathcal P(\mathfrak p_{\mathfrak h_0}^+,W)$. That is,

 \begin{equation}\label{eq:Lwh} \begin{split} \mathcal L_{W,H}^c   \ &  =\oplus_{\mu \in Spec(res_H(V_\tau))} V_\tau[V_\mu^H][V_{\mu^H +\rho_n^H}^L] \\ & =\{p \in V_{K-fin}: L_x^\tau(p)=\delta(x)p=0, \forall x \in \mathfrak p_{\mathfrak h}^+ \}=\mathcal P(\mathfrak p_{\mathfrak h_0}^+,W). \end{split} \end{equation}

 Via the Killing form, $b$, $\mathfrak p_{\mathfrak h_0}^-$ is in duality with $\mathfrak p_{\mathfrak h_0}^+$, which provides an isomorphism between $\mathcal P(\mathfrak p_{\mathfrak h_0}^+,W)$ and $S(\mathfrak p_{\mathfrak h_0}^-)\otimes W$. That is, the inverse  map to

 \phantom{xx} $\mathfrak p_{\mathfrak h_0}^- \otimes W  \ni Y\otimes w \mapsto (\mathfrak p_{\mathfrak h_0}^+ \ni X \stackrel{p_Y}{\longrightarrow} b(X,Y)w)\in \mathcal P(\mathfrak p_{\mathfrak h_0}^+,W),$

  extends to a   $L$-equivariant isomorphism

 \begin{equation*} D_0 :    \mathcal P(\mathfrak p_{\mathfrak h_0}^+,W) \rightarrow S(\mathfrak p_{\mathfrak h_0}^-)\otimes W .\end{equation*} The action of $L$ in $S(\mathfrak p_{\mathfrak h_0}^-)\otimes W$ is   tensor product   action.

 We recall $W$ is identified with the constant functions, the equality $\mathfrak h_0=\mathfrak p_{\mathfrak h_0}^- +\mathfrak l + \mathfrak p_{\mathfrak h_0}^+$ and that $V_\tau$ is a holomorphic Discrete Series  representation. Then, as in \cite{Va}\cite{OV3} we consider the subspace  \begin{equation*} \begin{split} \mathcal U(\mathfrak h_0)W  & :=\{L_D^\tau  w, D\in \mathcal U(\mathfrak h_0), w\in W\} \\ & =\{L_D^\tau  w, D\in \mathcal U(\mathfrak p_{\mathfrak h_0}^-), w\in W\}\end{split} \end{equation*} and  the map $D_1$ defined by
 $$\mathcal U(\mathfrak p_{\mathfrak h_0}^-)\otimes W \ni D\otimes w \stackrel{D_1}{\longmapsto} L_D^\tau  w \in \mathcal U(\mathfrak h_0)W.$$

 \subsubsection{}\label{sub:D1iso}Then,   $D_1$ is a $L$-equivariant isomorphism, as it follows from \ref{eq:1}.

 Finally, we recall $\mathfrak p_{\mathfrak h_0}^-$ is an abelian Lie algebra, hence, the symmetrization map from $S(\mathfrak p_{\mathfrak h_0}^-)$ onto $\mathcal U(\mathfrak p_{\mathfrak h_0}^-)$ may be thought of as an identification. Thus, composition isomorphisms,  we have given a   proof of the $L$-isomorphism \cite{Va} \cite[Proposition 1]{OV3} \cite{KP2} in  the holomorphic setting. \begin{equation} \mathcal L_{W,H}^c \stackrel{D}\cong \mathcal U(\h_0)W \end{equation}

In the sequel, the resulting $\g$-isomorphism (resp.   $K$-isomorphism,  $\h_0$-isomorphism, $L$-isomorphism) will be useful.
\begin{equation}  V_{K-fin}\ni L_D^\tau (w) \leftarrow   [D\otimes w]\in  \mathcal U(\g) \otimes_{\mathcal U(\k_\C +\p_\g^+)} W   \end{equation}
\begin{equation}     \mathcal U(\g) \otimes_{\mathcal U(\k_\C +\p_\g^+)} W \ni [D\otimes w]\leftarrow  D\otimes w \in S(\p_{\g}^-)\otimes W   \end{equation}
\begin{equation}  \mathcal U(\h_0)W \ni L_D^\tau (w) \leftarrow [D\otimes w]\in  \mathcal U(\h_0) \otimes_{\mathcal U(\k_\C +\p_\g^+)} W   \end{equation}
\begin{equation}      \mathcal U(\h_0) \otimes_{\mathcal U(\k_\C +\p_\g^+)} W \ni [D\otimes w] \leftarrow D\otimes w \in  S(\p_{\h_0^-})\otimes W   \end{equation}

 \section{Symmetry breaking operators represented via differential operators} \label{do}

In this section we analyze in detail the structure of symmetry breaking operators between admissible holomorphic Discrete Series representation, in particular, we will work out several facts on the nature of the involved differential operators when we describe    symmetry breaking operators by means of differential operators. This section is a continuation of ground work of Kobayashi-Pevzner \cite{KP1}\cite{KP2}, Nakahama \cite{Na} as well as latter work of   \cite{OV2}\cite{OV3}\cite{OV4} and references therein.

T. Kobayashi has coined every $H$-intertwining, continuous, linear map from a representation of $G$ into any of a discrete $H$-factor   by the term {\it Symmetry Breaking operator}. To follow, we recall definitions and facts required in order to describe a connection between symmetry breaking operators and differential operators.

 For $X\in \mathcal U(\mathfrak g)$, let $R_X$ denote  infinitesimal right derivative by $X$.  Let $(G,H)$ be an {\it arbitrary reductive pair}, $ K, L:=K\cap H$ respective maximal compact subgroups,  $(\tau, W)$ is (resp.    $(\sigma,Z)$ ) irreducible representations of $K,L $. We assume there exists  Discrete Series representations for $G, H$ and we realize them as respective spaces  $H^2(G,\tau), H^2(H,\sigma)$ of smooth sections of respective bundles $G\times_\tau W \rightarrow G/K$, $H\times_\sigma Z \rightarrow H/L$. Then,
 a definition of   differential operator is based on the fact (for a proof, \cite[chap V]{Wal}): A linear map $ D :\Gamma^\infty(G\times_\tau W) \rightarrow  \Gamma^\infty(G\times_\sigma Z)$ is a {\it differential operator} if and only if there exists finitely many smooth functions $c_\alpha :G\rightarrow Hom_\mathbb C(W,Z)$ so that $D(f)=\sum c_\alpha R_{X_1^{\alpha_1}\cdots X_n^{\alpha_n}}(f)$ or a similar expression by means of left derivatives. Such a $D$ is invariant by left translations by $G$ if and only if, for every $\alpha$,  $c_\alpha$ is a constant function and $\sum_\alpha c_a X_1^{\alpha_1}\cdots X_n^{\alpha_n} \in (Hom_\mathbb C(W,Z)\otimes \mathcal U(\mathfrak g))^L$.

 As in \cite{KP1}\cite[4.0.1]{OV3}    $S : H^2(G,\tau) \rightarrow H^2(H,\sigma)$ is {\it represented by a  differential operator}  if there exists a $G$-invariant differential operator   $D:   \Gamma^\infty(G\times_\tau W) \rightarrow     \Gamma^\infty(G\times_\sigma Z)$ so that $ \forall \,  f \in H^2(G,\tau)$ we have $  S(f)= res (D(f))$. Here, $res$ is the restriction map $ res: \Gamma^\infty(G\times_\sigma Z) \rightarrow \Gamma^\infty(H\times_\sigma Z)$. According to Theorem \cite[Theorem 2.5]{OV4}, in order to show that $S$ is the restriction of a differential operator,  it suffices to show $S$ is the restriction of a global differential operator.

  The Hilbert spaces $H^2(G,\tau), H^2(H,\sigma)$ have the property that its elements are smooth functions, thus, point evaluation is defined in each space, since each space is the $L^2$-kernel of an elliptic differential property, we have that point evaluation is a continuous linear map. This fact, allows to define the kernel of a symmetry breaking operator as follows: We fix $z \in Z, h \in H$, then the linear map $H^2(G,\tau)\ni f \mapsto (Sf(h),z)_Z \in \C$ is continuous, so there exists $K_S(\cdot, h)^\star \in H^2(G,\tau)$ so that $(Sf(h),z)_Z=\int_G (f(x), K_S(x, h)^\star (z))_W dx \, \forall f$. Thus, $S(f)(h)=\int_G K_S(x,h) f(x) dx$. For more details see \cite{OV2}\cite{OV4}\cite{Kob9}.

   We also recall that in \cite[Lemma 4.2]{OV2} it is shown that $S$ is the restriction of a differential operator if and only if $K_S(\cdot,e)^\star z$ is a $K$-finite vector for each $z\in Z$. Actually, we always have $K_S(\cdot,e)^\star z \in V_\tau^G[V_\sigma^H][Z]$, the isotypic component of $(\sigma,Z)$ in the isotypic component $V_\tau^G[V_\sigma^H]$ \cite{OV4}.

  \subsection{Symmetry breaking operators and  normal derivatives} For this subsection $(G,H)$ is a symmetric pair and $V_\tau^G$ is a   square integrable representation. Our aim is to generalize a result  in   \cite[Theorem 5.1]{Na}. In \cite{KP2} are considered symmetry breaking operators expressed by means of normal derivatives, they obtain results for holomorphic embedding of a rank one symmetric pairs.   As before, $H_0=G^{\sigma \theta}$ is the associated subgroup. We recall $\mathfrak h\cap \mathfrak p$ is orthogonal to $\mathfrak h_0\cap \mathfrak p$ and that $\mathfrak h\cap \mathfrak p\equiv T_{eL}(H/L)$, $\mathfrak h_0\cap \mathfrak p\equiv T_{eL}(H_0/L)$. Hence, for $X \in \mathfrak h_0\cap \mathfrak p$, more generally for $X \in \mathcal U(\mathfrak h_0)$, we say $R_X$ is a normal derivative  to $H/L$ differential operator. For short, {\it normal derivative}.  This allows us to write: a symmetry breaking operator $S$ is represented by a {\it normal derivative differential operator}, if there exists $\sum_\alpha c_\alpha X^{\alpha_1}\cdots X^{\alpha_s}\in (Hom_\C(W,Z)\otimes \mathcal U(\h_0))^L$ so the associated differential operator agrees with $S $ on the subspace $V_\tau^G$. Other ingredient necessary for the next Proposition are the subspaces $\mathcal L_{W,H}^c$ (\ref{eq:Lwh}) and $\mathcal U(\mathfrak h_0)W$. The latter subspace is contained in the subspace of $K$-finite vectors. Since, owing to our hypothesis, $res_H(V_\tau^G)$ is $H$-admissible $\mathcal L_{W,H}^c$ is contained in the subspace of $K$-finite vectors \cite[Prop 1.6]{Kobdd3}. However, it might not be equal to $\mathcal U(\mathfrak h_0)W$ as we will verify. The next Proposition and its converse, dealt with consequences of the equality $\mathcal L_{W,H}^c = \mathcal U(\mathfrak h_0)W$. We would like to recall that due to the definition of $\mathcal L_{W,H}^c $, we always have the equality of isotypic components $\mathcal L_{W,H}^c [Z]=  V_\tau^G[V_\sigma^H][Z]$.
\begin{prop}\cite[Proposition 5]{OV3} We assume $(G,H)$ is a symmetric pair.  We also assume there exists a irreducible representation $(\sigma, Z)$ of $L$ so that $V_\sigma^H$ is a irreducible factor of $res_H(V_\tau^G)$ and  $ V_\tau^G[V_\sigma^H][Z]=\mathcal L_{W,H}^c[Z]= \mathcal U(\mathfrak h_0)(W)[Z]=L_{\mathcal U(\mathfrak h_0) }^\tau( W)[Z]$. Then,   any symmetry breaking operator from $V_\tau^G$ into $V_\sigma^H $ is represented by a normal derivative  differential operator. Conversely.  If every element
 in $Hom_H(V_\tau^G, V_\sigma^H) $ has a expression as differential operator by means of "normal derivatives",  then, the equality  $\mathcal L_{W,H}^c[Z]= V_\tau^G[V_\sigma^H][Z]=\mathcal U(\mathfrak h_0)W[Z]$ holds.
\end{prop}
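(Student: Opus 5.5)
The plan is to translate both directions into statements about the kernel vectors $K_S(\cdot,e)^\star z$, using two facts recalled above. First, $K_S(\cdot,e)^\star z\in V_\tau^G[V_\sigma^H][Z]$. Second, by \cite[Lemma 4.2]{OV2}, $S$ is represented by a differential operator exactly when these vectors are $K$-finite. In the admissible setting they are always $K$-finite, since $\mathcal L_{W,H}^c\subset (V_\tau^G)_{K-fin}$.

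The key dictionary I would establish first is the following. For $D=\sum_\alpha c_\alpha X^\alpha\in (Hom_\C(W,Z)\otimes\mathcal U(\g))^L$, the operator $f\mapsto res(D f)$ evaluated at $e$ gives
\[
(S f(e),z)_Z=\sum_\alpha (R_{X^\alpha}f(e),c_\alpha^\star z)_W .
\]
By the reproducing property of $H^2(G,\tau)$ at $e$ (the kernel of evaluation at $e$ is $W$, the lowest $K$-type), the right side equals $(f, L_{\check D}^\tau(c^\star z))$. Here $\check D$ is the image of $D$ under the principal anti-automorphism, conjugated as needed. Hence
\[
K_S(\cdot,e)^\star z=\sum_\alpha L^\tau_{(X^\alpha)^\vee}(c_\alpha^\star z)\in L^\tau_{\mathcal U(\g)}W .
\]
The crucial observation is that this is compatible with subalgebras: if $D$ has its $\mathcal U(\g)$-part in $\mathcal U(\h_0)$, then $K_S(\cdot,e)^\star z\in \mathcal U(\h_0)W$. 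Conversely, any element of $\mathcal U(\h_0)W[Z]$ on which $L$ acts via an $L$-map from $Z$ can be written this way with $L$-invariant coefficients. For this step I would use the $L$-isomorphism $D_1:\mathcal U(\p_{\h_0}^-)\otimes W\to \mathcal U(\h_0)W$ of \ref{sub:D1iso} and average over $L$ to get $L$-invariance.

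For the direct implication, fix $S$. The map $z\mapsto K_S(\cdot,e)^\star z$ is an $L$-map from $Z$ into $V_\tau^G[V_\sigma^H][Z]$, which by hypothesis equals $\mathcal U(\h_0)W[Z]$. Pulling back through $D_1^{-1}$ gives an $L$-map $Z\to \mathcal U(\p_{\h_0}^-)\otimes W$, that is, an element of $(Hom_\C(W,Z)\otimes \mathcal U(\h_0))^L$ after dualizing. The resulting normal derivative operator $D$ has, by the dictionary, the same kernel at $e$ as $S$. Both $S$ and $res\circ D$ are $H$-equivariant, so their kernels are determined by the values at $e$, via $K(x,h)=K(h^{-1}x,e)$ up to $\sigma(h)$. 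Hence $S=res\circ D$ on $V_\tau^G$.

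For the converse, every $S$ being represented by a normal derivative operator gives, by the dictionary, $K_S(\cdot,e)^\star Z\subset \mathcal U(\h_0)W$. The main point is to show that the spaces $K_S(\cdot,e)^\star Z$, as $S$ runs over $Hom_H(V_\tau^G,V_\sigma^H)$, span all of $V_\tau^G[V_\sigma^H][Z]$. I would argue as follows. If $v$ in this isotypic component were orthogonal to all of them, then $(Sv)(e)=0$ for all $S$. Applying this to $L$-translates, and using that the $L$-isotypic $Z$-part of $V_\tau^G[V_\sigma^H]$ consists of lowest $L$-type vectors of the $H$-factors, the projections onto each factor isomorphic to $V_\sigma^H$ would vanish at $e$ on their lowest $L$-type. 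Hence they vanish, so $v=0$. This gives $V_\tau^G[V_\sigma^H][Z]\subset\mathcal U(\h_0)W[Z]$. The reverse inclusion into $\mathcal L_{W,H}^c[Z]$ follows from $\mathcal U(\h_0)W\subset \mathcal L_{W,H}^c=\mathcal P(\p_{\h_0}^+,W)$, because $\p_\h^+$ commutes with $\p_{\h_0}^-$ modulo $\k$-terms that kill $W$ together with the $\delta$-description.

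I expect the main obstacle to be the dictionary step. This means carefully identifying the kernel vector of $res\circ D$ with $L^\tau_{\check D}(c^\star z)$, including the anti-automorphism and the conjugation, and checking that $L$-invariance is preserved in both directions.
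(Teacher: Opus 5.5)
Your direct implication and the first half of the converse are sound, and they follow the paper's own technique. In the direct implication you pull the kernel vectors $K_S(\cdot,e)^\star z$ back through $D_1$ to obtain a symbol in $\mathcal U(\p_{\h_0}^-)\otimes W$, exactly as in \ref{sub:tech} and formula \eqref{eq:sasdfo}. In the converse, your orthogonality argument correctly shows that the spaces $Z_S$ span $V_\tau^G[V_\sigma^H][Z]$. The paper uses the same fact, $\mathcal L_{W,H}^c[Z]=\sum_S Z_S$, in the proof of Proposition~\ref{prop:everynorfortriple}. This gives the inclusion $V_\tau^G[V_\sigma^H][Z]\subseteq\mathcal U(\h_0)W[Z]$.

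The gap is in your last step. The containment $\mathcal U(\h_0)W\subset\mathcal L_{W,H}^c$ that you invoke is false in general. The spaces $\mathcal U(\h_0)W$ and $\mathcal L_{W,H}^c=\mathcal P(\p_{\h_0}^+,W)$ are $L$-isomorphic, but they are different subspaces of $(V_\tau^G)_{K-fin}$.

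\begin{itemize}
\item The brackets $[\p_\h^+,\p_{\h_0}^-]\subset\q_\C\cap\k_\C$ do not kill $W$ unless they lie in $Ker(\tau)$. For $X\in\p_\h^+$ and $Y\in\p_{\h_0}^-$ one has $L_X^\tau[Y\otimes w]=[1\otimes\tau([X,Y])w]$; compare Example~\ref{exa:example1}.
\item Already in degree two, the terms $[[X,Y_1],Y_2]\in\p_\h^-$ survive whenever the triple bracket is nonzero, even for scalar $\tau$.
\end{itemize}

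Deciding when this inclusion holds is precisely the content of Propositions~\ref{prop:orderkisnormal} and \ref{prop:equal}, so it cannot be taken for free.

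What closes the argument is a dimension count.
\begin{itemize}
\item Composing $D_0$ and $D_1$ (\ref{sub:D1iso}) gives an $L$-isomorphism $\mathcal L_{W,H}^c\cong\mathcal U(\h_0)W$.
\item Hence $\dim\mathcal L_{W,H}^c[Z]=\dim\mathcal U(\h_0)W[Z]$, and this is finite by $H$-admissibility.
\item Since $\mathcal L_{W,H}^c[Z]=V_\tau^G[V_\sigma^H][Z]$, your inclusion is between spaces of equal finite dimension, so it is an equality.
\end{itemize}
This is how the paper argues; see the proof of Proposition~\ref{prop:everynorfortriple} and the opening of the proof of Proposition~\ref{prop:equal}.
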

\subsubsection{Examples}\label{sub:zs} We present four examples, for two of the examples we obtain quite precise results, whereas the second and fourth example shows the difficulty for computing the subspace of first order normal derivative operators inside the space of first order symmetry breaking operators. We begin with a few general considerations and then move to develop the examples. We fix  a symmetry breaking operator $S:V_\tau^G \mapsto V_\sigma^H$, represented by the kernel $K_S :G\times H \rightarrow Hom_\mathbb C( W, Z)$ and we recall  the subspace $Z_S:=Image(Z\ni z \mapsto K_{S^*}(e, \cdot) (z)=K_S(\cdot, e)^\star z \in V_\tau^G [V_\sigma^H][Z]))$ is a $L$-irreducible subspace contained in $\mathcal L_{W,H}^c$ \cite[Proposition 2.1]{OV3}. Thus, either $Z_S \cap \mathcal U(\mathfrak h_0)W=\{0\}$ or $Z_S \subset \mathcal U(\mathfrak h_0)W$. Our hypothesis is that $(L_\cdot^\tau, V_\tau^G)$ is an $H$-admissible representation, hence,  every symmetry breaking operator is represented by a differential operator  \cite{KP1}, \ref{eq:sasdfo},   \cite[Proposition 4.4]{OV2}. We recall in \cite[Proposition 6.1]{OV3}, it is shown that in the case  $Z_S \subset \mathcal U(\mathfrak h_0)W$,  $S$ is represented by a normal derivative differential operator, see \ref{sub:tech}, whereas in the   case $Z_S \cap \mathcal U(\mathfrak h_0)W=\{0\}$,  $S$ is represented by a differential operator   that never will be a normal derivative differential operator.

 The first order symmetry breaking operators $S$ are those $S$ such that $Z_S \subset \mathcal V^{(1)}$ (see \ref{sub:vn}, \ref{sub:disjoint}). More precisely, are those $S \in Hom_H(V_\tau^G, V_\sigma^H)$ so that

 $Z_S=Im(Z\ni z \mapsto  K_S(\cdot, e)^\star z \in V_\tau^G[V_\sigma^H][Z])
\subseteq \mathcal L_{W,H}^c \cap \mathcal V^{(1)}$

 \phantom{xxxxxxxxxxxx} $=\{ Y:=\sum_{\gamma \in \Psi_n(\g)} X_{-\gamma} \otimes w_\gamma : L_X^\tau(Y)=0 \, \forall X \, \in \p_\h^+ \}  $.

  Whereas, the first order normal derivative symmetry breaking operators are those $S$ so that

 $Z_S=Im(Z\ni z \mapsto  K_S(\cdot, e)^\star z \in V_\tau^G[V_\sigma^H][Z]) \subseteq \mathcal L_{W,H}^c \cap \mathcal U(\h_0)W \cap  \mathcal V^{(1)}$

\phantom{xxxxxxxxxxxx} $=\{ Y:=\sum_{\gamma \in \Psi_n(\h_0)} X_{-\gamma} \otimes w_\gamma : L_X^\tau(Y)=0 \, \forall \, X \in \p_\h^+ \}  $.

  Here, $X_\gamma$ is a root vector for the root $\gamma$ and $w_\gamma \in W$.
For the actual computation for the first two pairs  (see    \ref{exa:example1},\ref{exa:example2}), for the third and fourth pair see \ref{eq:so2n2n-1}

 The pairs we consider are:\\ a)($\g=\mathfrak{su}(n,1),   \h =\mathfrak s(\mathfrak u( 1 )\oplus \mathfrak{u}(n-1,1))$. In this case for $\tau$ a scalar representation we have the equality $\mathcal L_{W,H}^c \cap \mathcal U(\h_0)W\cap \mathcal V^{(1)}=\mathcal L_{W,H}^c \cap \mathcal V^{(1)}$, For $\tau$ a nonscalar representation we  obtain $\mathcal L_{W,H}^c \cap \mathcal U(\h_0)W\cap \mathcal V^{(1)}$ is an irreducible representation for $L$ and a proper subspace of $\mathcal L_{W,H}^c \cap \mathcal V^{(1)}$. We compute the dimension of the operators in $\dim Hom_H(V_\tau, V_\sigma)$ minus the dimension of the subspace of normal derivative operators is equal to $\dim V_{\lambda +\rho_n}^K -\dim V_{w_L(W_K(\lambda +\rho_n-\rho_c))+\rho_L}^L$.

b)$(\g=\mathfrak{su}(n,1),   \h =\mathfrak s(\mathfrak u( n-1)\oplus \mathfrak{u}( 1,1))$. Once again, for $\tau$ a scalar representation we have every first order symmetry breaking operator is normal derivative. For a nonscalar $\tau$ we have the subspace of first order normal symmetry operators is a nontrivial proper subspace of $Hom_H(V_\tau, V_\sigma) $. We only have a conjecture how large is the subspace of normal derivative operators. see \ref{sub:w0irred},\ref{exa:example2}

c)$(\g=\mathfrak{so}(2,2n+1),   \h =\mathfrak{so}(2,2n)+\mathfrak o( 1 ))$.
For $\tau$ non scalar representation we have there is no first order normal derivative operator, this follows from $\mathcal V^{(1)}\cap \mathcal L_{W, \mathfrak{so}(2,2n)+\mathfrak o( 1 )}^c \cap \mathcal U(\mathfrak{so}(2,1)+\mathfrak {so}(2n))W=\{0\}$. \ref{eq:so2n21},\ref{prop:sonn-1}.

d)$(\g=\mathfrak{so}(2,2n+1),   \h =\mathfrak{so}(2,1)+\mathfrak {so}(2n))$. This is the associate pair to the pair in c). In this case there exists nonzero first order normal derivative operator, owing to  $\mathcal V^{(1)}\cap \mathcal L_{W, \mathfrak{so}(2,1)+\mathfrak {so}(2n)  }^c \cap \mathcal U(\mathfrak{so}(2,2n)+\mathfrak o( 1 ) )W\not=\{0\}$. See Table A,  \ref{prop:sonn-1}.

  \subsubsection{Technique to represent $S$ as a differential operator}\label{sub:tech} A technique to find a differential operator that represents $S$ knowing its kernel $K_S$ has been developed in \cite{OV3}\cite{OV4} is as follows: we fix a orthonormal basis $\{z_p \}_{1\leq p \leq\dim  Z}$ for $Z$. The hypothesis $res_H(V)$ is admissible gives  $V_\tau^G[V_\sigma^H][Z] \subset L_ {\mathcal U(\mathfrak g)}(V_\tau^G[W])[Z])$ \cite[Prop. 1.6]{Kobdd3}, since  $K_S(\cdot,e)^\star z \in V_\tau^G[V_\sigma^H][Z], \forall z \in Z$, we obtain for     each  $p$, there exists   $D_{p,i}\in \mathcal U(\mathfrak g)$) and $w_{p,i} \in W$ so that $K_S(\cdot,e)^\star z_p=\sum_i L_{D_{p,i}}^\tau K_\tau(\cdot,e)^\star w_{p,i}$. Here, the sum over $i$ is a finite sum, and, depends on $p$. Then,  for any $f \in V_\tau^G$    \begin{equation}\label{eq:sasdfo} S(f)(h)= \sum_{1\leq p \leq  \dim  Z} \sum_i ( R_{\check D_{p,i}^\star}f(h),w_{p,i})_W \,\, z_p. \end{equation} Since  $D_{p,i} \in \mathcal U(\mathfrak g)$)  such a expression of $S $ is a representation of $S$ in terms of  in terms of differential operators.

In our case, $V_\tau^G$ is a $H$-admissible holomorphic Discrete Series representation, whence $(V_\tau^G)_{K-fin}= \mathcal U(\p_\g^-)W$, and, then, we may choose  each $D_{p,i} \in \mathcal U(\p_\g^-) $.  Since, $\check D_{p,i}^\star \in \mathcal U(\p_\g^+)$  and in \cite[IX,\S 5]{L} we find a proof of: right infinitesimal derivative in $\mathcal D_G \equiv G/K$ with respect to a root vector $E_\alpha$ in $\p_\g^+$ is essentially the partial derivative $\partial_{z_\alpha}$, we have outlined $S$ is represented by a holomorphic differential operator, a result of \cite{KP1}. In case $K_S(\cdot,e)^\star z \in V_\tau^G[V_\sigma^H][Z]  \in      L_ {\mathcal U(\mathfrak h_0)}^\tau(V_\tau^G[W])[Z]$, we further obtain each $D_{p,i} \in \mathcal U(\mathfrak p_{\h_0}^-)$, in consequence,  an expression of $S $   in terms of normal derivatives. Thus,  the equality $\mathcal L_{W,H}^c = \mathcal U(\mathfrak h_0)W$ implies: {\it every symmetry breaking operator into an arbitrary irreducible  $H$-factor of the restriction   of the holomorphic Discrete Series $V_\tau^G$, to $H$, is represented by a normal differential operator}.

To follow, we resume a consequence of the statements in Section~\ref{sec:algebraic}. Subsequently to  the statement of each Proposition we explicit the meaning of such a statement  and the relation with Section~\ref{sec:algebraic}.

\begin{prop}\label{prop:allsbarenormal} We keep the assumption $\g$ is a simple Lie algebra and $H/L\rightarrow G/K$ is a holomorphic immersion, then

a)When $ \k_{ss}$ is a simple Lie algebra,   $\mathcal L_{W,H}^c = \mathcal U(\mathfrak h_0)W$ if and only if $[[\p_\h^+,\p_{\h_0}^-],\p_{\h_0}^-]=\{0\}$ and $(\tau, W)$ is a one dimensional representation.

b) When $ \k_{ss}$ is not a simple Lie algebra,   $\mathcal L_{W,H}^c = \mathcal U(\mathfrak h_0)W$ if and only if $[[\p_\h^+,\p_{\h_0}^-],\p_{\h_0}^-]=\{0\}$ and $(\tau, W)$ is so that one of the simple factors of  $ \k_{ss}$ is contained in $Ker(\tau)$.
\end{prop}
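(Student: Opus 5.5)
The plan is to compare the two subspaces inside $V_{K-fin}=\mathcal U(\p_\g^-)W\cong S(\p_\g^-)\otimes W$. By \ref{eq:Lwh}, $\mathcal L_{W,H}^c=\{p\in V_{K-fin}: L^\tau_X p=0,\ \forall X\in\p_\h^+\}$. By \ref{sub:D1iso}, both $\mathcal L_{W,H}^c$ and $\mathcal U(\h_0)W$ are $L$-isomorphic to $S(\p_{\h_0}^-)\otimes W$, and both are graded by degree in $S(\p_\g^-)$ with finite-dimensional pieces of equal dimension. Hence the equality $\mathcal L_{W,H}^c=\mathcal U(\h_0)W$ is equivalent to the inclusion $\mathcal U(\h_0)W\subset\mathcal L_{W,H}^c$. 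That inclusion in turn means
\[
L^\tau_X L^\tau_{Y_1\cdots Y_n}w=0 \quad\text{for all } X\in\p_\h^+,\ Y_i\in\p_{\h_0}^-,\ w\in W .
\]
So the whole proof reduces to computing $L^\tau_X L^\tau_{Y_1\cdots Y_n}w$ by commuting $X$ to the right, where it kills $w$ because $\p^+W=0$.

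First I will record the bracket relations forced by $\sigma$ and $\theta$. For $X\in\p_\h^+$ and $Y\in\p_{\h_0}^-$ we have $[X,Y]\in\k_\C\cap\q_\C$. For $Y'\in\p_{\h_0}^-$ we get $[[X,Y],Y']\in[\q\cap\k,\q\cap\p]\cap\p^-\subset\p_\h^-$, and all further brackets with elements of $\p^-$ vanish since $\p^-$ is abelian. The commutation then gives
\[
X Y_1\cdots Y_n=\sum_i Y_1\cdots\widehat{Y_i}\cdots Y_n[X,Y_i]+\sum_{i<j}Y_1\cdots\widehat{Y_i}\cdots\widehat{Y_j}\cdots Y_n[[X,Y_i],Y_j]+(\text{terms in }\mathcal U(\g)\p^+).
\]
Applied to $w$ and read in $S(\p_\g^-)\otimes W$, the first sum is the element $\sum_i Y_1\cdots\widehat{Y_i}\cdots Y_n\otimes\tau([X,Y_i])w$, which lies in $S(\p_{\h_0}^-)\otimes W$. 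The second sum lies in $S^{n-2}(\p_{\h_0}^-)\,\p_\h^-\otimes W$. Because $S(\p_\g^-)=S(\p_{\h_0}^-)\otimes S(\p_\h^-)$, these two pieces are linearly independent, so vanishing for all $n$ is equivalent to two separate conditions. The case $n=1$ gives the first: $\tau([\p_\h^+,\p_{\h_0}^-])=0$. The case $n=2$, choosing $w$ with $\tau$ already vanishing on the first piece, gives the second: $[[\p_\h^+,\p_{\h_0}^-],\p_{\h_0}^-]=0$. Conversely, these two conditions make every term vanish for all $n$, which proves sufficiency.

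It remains to translate $\tau([\p_\h^+,\p_{\h_0}^-])=0$ into the conditions on $W$ stated in a) and b). Here I will show that the subspace $[\p_\h^+,\p_{\h_0}^-]$ generates, as a Lie algebra, the ideal $\mathfrak j:=\q_\C\cap\k_\C+[\q_\C\cap\k_\C,\q_\C\cap\k_\C]$ of $\k_\C$. This set is an ideal of $\k_\C$ because it is $\l$-stable and stable under brackets with $\q\cap\k$. Moreover $\mathfrak j\subset\k_{ss}$, since $\z_\k\subset\l$.
\begin{itemize}
\item In case a), $\mathfrak j$ is nonzero (for $\q\cap\k\neq0$; the degenerate case is handled separately), so $\mathfrak j=\k_{ss}$. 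Then $\tau$ is trivial on $\k_{ss}$, hence one-dimensional by irreducibility.
\item In case b), $\k_{ss}$ has two simple factors from the classification, and $\mathfrak j$ is a sum of simple factors. I will check from \cite{Kob9} Table 3.6.1 that it is exactly one of them, giving $\tau$ trivial on that simple factor. Conversely, a $\tau$ killing the relevant factor kills $[\p_\h^+,\p_{\h_0}^-]$.
\end{itemize}

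The main obstacle is to show that $[\p_\h^+,\p_{\h_0}^-]$ spans all of $\q_\C\cap\k_\C$, and not a smaller $L$-submodule. I would try a root-theoretic argument first: for each root $\beta$ in $\q\cap\k$, choose noncompact roots $\gamma\in\Phi_n(\h)$ and $\delta\in\Phi_n(\h_0)$ with $\gamma-\delta=\beta$, using that $\p_\g^+$ is an irreducible $K$-module when $\g$ is simple and that $\q\cap\k$ acts nontrivially between the $L$-modules $\p_\h^+$ and $\p_{\h_0}^+$. If this fails in some case, I would fall back on the classification of holomorphic symmetric pairs in \cite{Kob9}\cite{KOadv} to identify $\mathfrak j$ and the relevant simple factor case by case.
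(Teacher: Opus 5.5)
Your first half is correct. The graded dimension count reduces the equality to the inclusion $\mathcal U(\h_0)W\subset\mathcal L_{W,H}^c$. Your commutation formula, together with the observation $[[X,Y],Y']\in[\q\cap\k,\q\cap\p]\cap\p^-\subset\p_\h^-$, then shows the inclusion is equivalent to $\tau([\p_\h^+,\p_{\h_0}^-])=0$ plus vanishing of the triple bracket. This is what the paper obtains by combining Propositions~\ref{sub:7}, \ref{prop:orderkisnormal} and \ref{prop:equal}.

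The gap is in translating $\tau([\p_\h^+,\p_{\h_0}^-])=0$. You single out as the main obstacle the claim that $[\p_\h^+,\p_{\h_0}^-]$ spans $\q_\C\cap\k_\C$ and generates $\mathfrak j$ as a Lie algebra. This is false exactly in the cases where the equality can hold. When $[[\p_\h^+,\p_{\h_0}^-],\p_{\h_0}^-]=\{0\}$ one has $[\p_\h^+,\p_{\h_0}^-]=(\q_\C\cap\k_\C)^-$ (see \ref{sub:bracketphh0}), an abelian proper subspace, so the Lie algebra it generates is itself. For instance, for $(\mathfrak{so}(2m,2),\mathfrak u(m,1))$ every difference $(\delta-e_i)-(\delta+e_j)=-(e_i+e_j)$ is a negative root. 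So your root-theoretic argument can never reach the roots of $(\q_\C\cap\k_\C)^+$. As written, the step ``$\tau$ vanishes on $\mathfrak j$'' is unjustified.

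What you need instead is that $Ker(\tau)$ is an ideal of $\k_\C$. Then $\tau([\p_\h^+,\p_{\h_0}^-])=0$ if and only if $Ker(\tau)$ contains the ideal generated by $[\p_\h^+,\p_{\h_0}^-]$. For this only $[\p_\h^+,\p_{\h_0}^-]\neq\{0\}$ and $[\p_\h^+,\p_{\h_0}^-]\subset\k_{ss}$ matter. Nonvanishing follows, as in the proof of Proposition~\ref{sub:7}, from $\k_\C=[\p^+,\p^-]$ and $[\p^\pm,\p^\pm]=0$. These give $\q_\C\cap\k_\C=[\p_\h^+,\p_{\h_0}^-]+\overline{[\p_\h^+,\p_{\h_0}^-]}\neq\{0\}$. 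In case a), simplicity of $\k_{ss}$ then forces $\k_{ss}\subset Ker(\tau)$.

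In case b), your claim that the relevant ideal is always a single simple factor is also false. For $\h=\mathfrak{su}(k,l)+\mathfrak{su}(m-k,n-l)+\mathfrak u(1)$ with $1\le k<m$, $1\le l<n$, and for $\mathfrak{sp}(n,\R)$ and $\mathfrak{so}^\star(2n)$ inside $\mathfrak{su}(n,n)$, $[\p_\h^+,\p_{\h_0}^-]$ meets both factors (\ref{sub:b)}, \ref{sub:forb1}). For those pairs $\tau$ would have to be one-dimensional. The statement survives only because these pairs have nonzero triple bracket. So you must impose the triple-bracket condition first. By \ref{sub:triplepairszero} this leaves $\h=\mathfrak{su}(m,l)+\mathfrak{su}(n-l)+\mathfrak u(1)$, up to interchanging $m$ and $n$. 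Only then does the root computation of \ref{sub:b)} place $[\p_\h^+,\p_{\h_0}^-]$ inside one specific factor. Your formulation ``the relevant factor'' is then the correct one, and it is sharper than ``one of the simple factors'' in the statement.
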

  In \cite[4.6]{Va}, see \ref{sub:triplepairszero},  we find the list of symmetric   pairs $(\g,\h)$ that satisfies the condition $[[ \p_{\h_0}^-,\p_\h^+ ],\p_{\h_0}^-]=\overline{[[ \p_{\h_0}^+, \p_\h^- ],
\p_{\h_0}^+]}=\{0\}$, they  are:

  a) Pairs with $ \k_{ss}$ is   a simple Lie algebra: $ (\mathfrak{su}(1,n), \mathfrak{su} (1,l) +\mathfrak{su} ( n-l)+\mathfrak u(1)) $; $(\mathfrak{so}(2m,2), \mathfrak u(m,1)) $;   $(\mathfrak{so}^\star (2n), \mathfrak u(1,n-1)) $; $(\mathfrak{so}^\star(2n),  \mathfrak{so}(2) +\mathfrak{so}^\star(2n-2)) $;
$(\mathfrak e_{6(-14)}, \mathfrak{so}(2,8)+\mathfrak{so}(2)).$

b) $ \k_{ss}$ is not a simple Lie algebra: $ m>1,n>1, (\mathfrak{su}(m,n), \mathfrak{su} (m,l) +\mathfrak{su} ( n-l)+\mathfrak u(1)).$

Therefore for the pairs listed in a) any symmetry breaking operator with domain an scalar holomorphic Discrete Series representation is represented by a normal derivative differential operator, whereas, for the pair listed in b) the same statement holds, and, besides  for representations   $(\tau, W)$ so that $Ker(\tau)$ contains at least one irreducible factor of $\k_{ss}\cong \mathfrak{su}(m)+\mathfrak{su}(m)$.

Among the pairs in a) b), the rank one pairs are:  $(\mathfrak{so}(2m,2), \mathfrak u(m,1)) $;  $(\mathfrak{so}^\star(2n),  \mathfrak{so}(2) +\mathfrak{so}^\star(2n-2)) $; $   (\mathfrak{su}(m,n), \mathfrak{su} (m,n-1) +\mathfrak{su} ( n-1)+\mathfrak u(1)).$ Thus, for the rank one pairs, the statement in Proposition~ \ref{prop:allsbarenormal} agrees with \cite[Theorem 5.3]{KP2}.

Up to now, we have analyzed a extreme case, that is, every symmetry breaking operator is represented via normal derivatives. To follow, we present some results of extreme cases and non extremal cases.  We begin with an example: (note that in this  example $\g$ is not a simple Lie algebra).  We have for
  $G=SL_2(\R)\times SL_2(\R), H=SL_2(\R)$, that, $\mathcal L_{W,H}^c \cap \mathcal U(\h_0)W =W+\mathcal V^{(1)}\cap \mathcal L_{W,H}^c \cap \mathcal U(\h_0)W $.
Thus, every symmetry breaking operator represented by a differential operator of order
  zero or one is represented by a normal derivative differential operator, meanwhile, a symmetry breaking operator represented by a differential operator of order bigger than one is not represented  by a normal derivative differential operator.

  On order to continue, we point out that  the representation of a symmetry breaking operator by a differential operator is not unique because if $D=\sum_\alpha c_\alpha R_{X_1^{\alpha_1}\cdots X_{2q}^{\alpha_{2q}}})\in (Hom_\C (W,Z)\otimes \mathcal U(\g))^L$ represents $S$, also does $D + (\Omega_\g -c)^k$ for $c$ equal to the value of the central infinitesimal character of $V_\tau^G$ in $\Omega_\g$, and any $k\geq 1$, this is due  to any element of $V_\tau^G$ is a smooth function either en $G$ or in $\mathcal D_G$. However, in \cite[Proposition 2.12]{OV4} we have shown,
 \begin{prop} \label{prop:uniqness} We assume $H/L \rightarrow G/K$ is a holomorphic embedding and both $V_\tau^G, V_\sigma^H$ are holomorphic Discrete Series representations. Let $S : V_\tau^G \rightarrow V_\sigma^H$ be a symmetry breaking operator. Then, there exists an unique linear function $Z\ni z\mapsto D_z:=\sum_i D_z^i \otimes w_i \in \mathcal U(\mathfrak p_\g^-)\otimes W$ so that $\forall z \in Z, \,K_S^c(\cdot,e)^\star z = \sum_i L_{D_z^i}^\tau (K_\tau^c(\cdot, e)^\star w_i)$.
 \end{prop}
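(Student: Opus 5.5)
Existence and uniqueness are handled separately. For existence, fix $z\in Z$. By the remark recalled before \ref{sub:tech}, $K_S^c(\cdot,e)^\star z\in V_\tau^G[V_\sigma^H][Z]\subset\mathcal L_{W,H}^c$. The restriction to $H$ is admissible, so this vector is $K$-finite (\cite[Prop 1.6]{Kobdd3}), and hence lies in $(V_\tau)_{K-fin}=\mathcal P(\p^+,W)$. Next, $K_\tau^c(\cdot,e)^\star w$ is the reproducing kernel at the origin $o$ (i.e. $z=0$). By \eqref{eq:ctauktauhol}, $K_\tau^c(0,\cdot)$ is the constant $\tau(e)^{-1}=\mathrm{Id}$, so $K_\tau^c(\cdot,e)^\star w=w$ is a constant function, i.e. an element of $W\subset V_\tau^{\p^+}$. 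The isomorphism \eqref{eq:1},
\[
S(\p_\g^-)\otimes W\cong\mathcal U(\p_\g^-)\otimes W\longrightarrow (V_\tau)_{K-fin},\qquad D\otimes w\mapsto L_D^\tau w ,
\]
is surjective. Therefore there exist finitely many $D_z^i\in\mathcal U(\p_\g^-)$ and $w_i\in W$ with $K_S^c(\cdot,e)^\star z=\sum_i L^\tau_{D_z^i}(K_\tau^c(\cdot,e)^\star w_i)$.

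For uniqueness, the map in \eqref{eq:1} is an isomorphism and not merely a surjection. Holomorphic Discrete Series have Harish-Chandra module equal to the generalized Verma module $\mathcal U(\g)\otimes_{\mathcal U(\k+\p^+)}W$, which is irreducible in the holomorphic discrete series range. By PBW and the abelianness of $\p^-$ it is free over $\mathcal U(\p^-)=S(\p^-)$. Consequently the element $D_z:=\sum_i D_z^i\otimes w_i\in\mathcal U(\p_\g^-)\otimes W$ is uniquely determined by $K_S^c(\cdot,e)^\star z$, since it is just the preimage under the isomorphism. Note that only the tensor $\sum_i D_z^i\otimes w_i$ is unique, not the individual factors, and the statement is phrased that way.

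Linearity of $z\mapsto D_z$ follows because $z\mapsto K_S^c(\cdot,e)^\star z$ is conjugate-linear or linear, depending on the convention for $\star$, and composing with the inverse of a linear isomorphism preserves this. I would adjust the convention so that the map is linear, in agreement with the definition of $Z_S$. As a bonus, the map is $L$-equivariant, because $z\mapsto K_S(\cdot,e)^\star z$ is $L$-equivariant and the isomorphism is $K$-equivariant.

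The main obstacle is justifying that \eqref{eq:1} is injective, i.e. that no nonzero element of $S(\p^-)\otimes W$ acts trivially on $W$. I would verify this concretely using the formulas of \ref{sub:jv}. A homogeneous $D\in S^k(\p^-)\otimes W$ yields a polynomial of degree $k$ whose top-degree part comes from iterating $L_x^\tau(p)(v)=\tau([x,v])p(v)-\tfrac12\delta([[x,v],v])p(v)$, where the first term raises degree by one. Alternatively, I would cite \eqref{eq:1} directly as given, since the paper asserts it as an isomorphism. A secondary point is making sure that the admissibility hypothesis is what gives $K$-finiteness of the kernel; this is already recalled in the text.
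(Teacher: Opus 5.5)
Your proposal is correct and follows essentially the route the paper relies on: the paper defers to \cite[Proposition 2.12]{OV4} and the technique of \ref{sub:tech}, getting existence from the $K$-finiteness of $K_S^c(\cdot,e)^\star z$ (forced by $H$-admissibility) together with $(V_\tau^G)_{K-fin}=\mathcal U(\p_\g^-)W$, where the kernel at the origin gives the constants $W$, and uniqueness from the isomorphism \eqref{eq:1}. One small correction concerns your optional hands-on check: for $x\in\p_\g^-$ both terms of $L_x^\tau$ raise the degree by exactly one, since the $\delta$-term differentiates along the quadratic field $[[x,v],v]$, so injectivity is better obtained from surjectivity in each degree together with $\dim S^k(\p_\g^-)\otimes W=\dim\mathcal P^k(\p_\g^+,W)$; and because the $w_i$ in the statement form a fixed basis, the individual $D_z^i$ are unique, not just the tensor.
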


Here, $w_i, i=1\cdots , \dim W$ is an ordered linear basis for $W$.
\subsubsection{Degree of a symmetry breaking operator}\label{sub:degree} We now proceed as in \ref{sub:tech} and express  $S$ as a differential operator. Therefore, we may and we  define the {\it degree of S} as  the minimum $k$ so that $D_z \in \sum_{0\leq j \leq k} \mathcal V^{(j)}$. For the definition of $\mathcal V^{(j)}$ see \ref{sub:vn}. In  \ref{sub:disjoint} we find that the  isotypic component $V_\tau^G[V_\sigma^H][Z] =\mathcal L_{W,H}^c[Z]$ is contained in some $\mathcal V^{(k)}$ for each irreducible factor $V_\sigma^H$ of $res_H(V_\tau^G)$, of course, $k$ depends on $V_\sigma^H$. Thus, the inclusion  $V_\tau^G[V_\sigma^H][Z] \subset \mathcal V^{(k)}$ for some $k$, yields all the symmetry breaking operators onto the same Discrete Series $V_\sigma^H$ are represented by homogeneous differential operators and they share  the same degree.

The {\it zero degree symmetry breaking operators} are constructed as follows: we fix a decomposition in irreducible representations for $L$ of $res_L(W)=\sum_j (\sigma_j, Z_j)$, let $P_j$ denote the projector onto $Z_j$ along $\sum_{i\not= j}Z_i$.  Let $r$ denote restriction of functions  from $G$ (resp. $\mathcal D_G$) to functions on $H$ (resp. $\mathcal D_H$) Then, $P_j \circ r$ maps $ V_\tau^G$ onto $V_{\sigma_j}^H$ and is a zero order operator. In this case $K_S(\cdot,e)^\star z =P_j(K_\tau(\cdot,e)^\star z) \in \mathcal V^{(0)}$, that is, it is a homogeneous operator.

{\it First order symmetry breaking operators} are the one's so that

 $K_S(\cdot,e)^\star z \in \mathcal V^{(0)}+ \mathcal V^{(1)}, \forall z \in Z$. Now in \cite[Proposition 1]{OV3}, see \ref{sub:disjoint}, it is shown that the $L$-modules $res_L(\mathcal V^{(0)})=res_L(W)$, $res_L(\mathcal V^{(1)})$ are disjoint, that is, they do not have a $L$-type in common, together with the fact that the map $z \mapsto K_S(\cdot,e)^\star z$ is a $L$-map, let us obtain
 $K_S(\cdot,e)^\star z \in   \mathcal V^{(1)}, \forall z \in Z$. That is, the {\it first order symmetry breaking operators} are "homogeneous" differential operators. We explicit that formula \ref{eq:sasdfo} together with the comment after the same, let us conclude that in holomorphic coordinates, $\sum_{\beta \in \Psi_n^\h} z_\beta E_\beta + \sum_{\gamma \in \Psi_n^{\h_0}} w_\gamma E_\gamma \in \p_\g^+$, a first order symmetry breaking operator has the shape \begin{equation}\label{eq:firstorder}  res_{\mathcal D_H}(\sum_{\beta \in \Psi_n(\h)} c_\beta(z,w)  \partial_{z_\beta} + \sum_{\gamma \in \Psi_n(\h_0)} d_\gamma(z,w) \partial_{w_\gamma} ) . \end{equation} Here, $c_\beta, d_\gamma $ are in  $\mathcal O(\mathcal D_G, Hom_\C( W,Z))$. The normal derivative operator is when  that $c_\beta $ vanishes for every $\beta$. \cite{KP1} has shown the deeper result under our hypothesis and holomorphic setting that every symmetry breaking operator is represented by a constant coefficient holomorphic operator.

 In \ref{sec:algebraic},  \ref{sub:7}   we have analyzed the inclusion  of  the subspace of first order normal derivative differential operators in the space of first order symmetry breaking operators, that is, we studied the intersection   $\mathcal U(\h_0)W
\cap \mathcal V^{(1)}\cap   \mathcal L_{W,H}^c$. To follow, we describe the results. We split up the presentation according to $(G,H)$ is so that $\k_{ss}$ is a simple Lie algebra, that is, $\g\ncong \mathfrak{su}(m,n), m>1,n>1$.   Proposition~\ref{prop:first} address the case  $\k_{ss}$ is the sum of two simple Lie algebras.
\begin{prop}\label{prop:nonormal}Our hypotheses
  are: $\g$ is a simple Lie algebra, $(G,H)$ a symmetric pair, $(G,H_0)$ associated pair, $H/L \rightarrow G/L$ is a holomorphic immersion, $(\tau, W)$(resp. $(\sigma, Z)$) lowest $K$-type (resp. lowest $L$-type) of holomorphic Discrete Series $V_\tau^G$ (resp. $V_\sigma^H$).

a) When $\k_{ss}$ is a simple Lie algebra (equivalently $\g\ncong \mathfrak{su}(m,n), n>1,m>1$) we have that every first order symmetry breaking operator is represented via normal derivatives if and only if $(\tau, W)$ is a one dimensional representation.

b) When  $\k_{ss}$ is a simple Lie algebra, $ [[\p_h^+, \p_{h_0}^-],\p_{h_0}^-]=\{0\}$ and $\dim W>1$. Then,   the subspace of first order symmetry breaking operator represented via normal derivatives is a nonzero and  proper subspace of the space of first order symmetry breaking operators.

c) When  $\k_{ss}$ is a simple Lie algebra, $ [[\p_h^+, \p_{h_0}^-],\p_{h_0}^-]\not=\{0\}$,  $\dim W>1$,  Table B shows   the pairs  so that there is no nonzero first order normal derivative symmetry breaking operator.

d) For the pair $(\mathfrak{sp}(n+1, \R), \mathfrak{sp}(n, \R)+\mathfrak{sp}(1, \R))$ in \cite[Theorem 5.3]{KP2} it is shown there is no normal derivative symmetry breaking operator of positive order.
\end{prop}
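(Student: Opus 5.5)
The plan is to reduce every item to a statement about the $L$-modules $\mathcal L_{W,H}^c\cap\mathcal V^{(1)}$ and $\mathcal L_{W,H}^c\cap\mathcal U(\h_0)W\cap\mathcal V^{(1)}$. By \ref{sub:zs} and \ref{sub:degree}, a first order symmetry breaking operator $S$ corresponds to the $L$-irreducible subspace $Z_S\subset\mathcal L_{W,H}^c\cap\mathcal V^{(1)}$, and $S$ is normal exactly when $Z_S\subset\mathcal U(\h_0)W$. Hence ``every first order operator is normal'' is equivalent to the equality
$$\mathcal L_{W,H}^c\cap\mathcal V^{(1)}=\mathcal L_{W,H}^c\cap\mathcal U(\h_0)W\cap\mathcal V^{(1)}.$$
Writing an element of $\mathcal V^{(1)}$ as $Y=\sum_{\gamma\in\Psi_n(\g)}X_{-\gamma}\otimes w_\gamma$, the formulas in \ref{sub:jv} show that, for $X\in\p_\h^+$, the condition $L^\tau_X(Y)=0$ becomes the linear system $\sum_\gamma \tau([X,X_{-\gamma}])w_\gamma=0$ (up to sign), with $[X,X_{-\gamma}]\in\k_\C$. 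Split $Y=Y_\h+Y_{\h_0}$ according to $\p_\g^-=\p_\h^-\oplus\p_{\h_0}^-$. Then $[\p_\h^+,\p_\h^-]\subset\l_\C$ and $[\p_\h^+,\p_{\h_0}^-]\subset\q_\C\cap\k_\C$. The normal elements are those with $Y_\h=0$, and for them the condition reads $\tau([X,X_{-\gamma}])$ summed over $\gamma\in\Psi_n(\h_0)$.

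For a), I first take $\tau$ one dimensional. Then $\tau$ vanishes on $\k_{ss}\supset\q_\C\cap\k_\C$ and is a character of $\z_\k\subset\l$. The $\h_0$-components therefore give no conditions, while the $\h$-components give $\tau([X,X_{-\beta}])$, which is a nonzero multiple of the scalar $\tau(H_\beta)$ on the diagonal. Since $\tau$ is a nonzero scalar on the central direction present in each $H_\beta$ for $\beta$ noncompact (this is where the existence of $V_\tau^G$ enters), a weight argument in the variables $\beta\in\Psi_n(\h)$ forces $Y_\h=0$, so every first order operator is normal. For the converse, with $\dim W>1$ and $\k_{ss}$ simple, I look for a first order operator that is not normal, that is, some $Y\in\mathcal L_{W,H}^c\cap\mathcal V^{(1)}$ with $Y_\h\neq0$. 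I would obtain one by counting $L$-types. By \ref{sub:disjoint}, $\mathcal L_{W,H}^c\cap\mathcal V^{(1)}$ is the $\p_\h^+$-annihilated part of $\p_\g^-\otimes W$, and I compare it with $\p_{\h_0}^-\otimes W$ restricted to $L$. Since $\k_{ss}$ is simple and $\tau$ is nontrivial on it, $\tau$ is nontrivial on $\q_\C\cap\k_\C$, because $\q\cap\k$ generates $\k_{ss}$ as an ideal. So some $\tau([X,X_{-\gamma}])$ is nonzero, and the normal part is strictly cut down. The mixed vector is then obtained by taking the projection of $\p_\g^-\otimes W$ onto a lowest $L$-type of the form $X_{-\beta}\otimes w$ and correcting it through the linear system.

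For b), the triple bracket hypothesis $[[\p_\h^+,\p_{\h_0}^-],\p_{\h_0}^-]=0$ makes $\mathrm{ad}$ of $[\p_\h^+,\p_{\h_0}^-]$ act on $\p_{\h_0}^-$ as zero, and it consists of nilpotent elements of $\k_{ss}$ by Section \ref{sec:notation}. I apply Engel's theorem to the nilpotent Lie subalgebra of $\k_\C$ that they generate (cf.\ Proposition~\ref{prop:existencenormal2}) to find a common null vector $w_0\in W$. With the right $L$-weight choice, $X_{-\gamma}\otimes w_0$ then lies in $\mathcal L^c_{W,H}\cap\mathcal U(\h_0)W\cap\mathcal V^{(1)}$ and is nonzero. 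Properness follows from the converse argument in a). Part c) is a case-by-case verification: for each pair in the classification with nonvanishing triple bracket and nonscalar $\tau$, I would show that the linear system restricted to $\gamma\in\Psi_n(\h_0)$ has only the trivial solution, and tabulate the result as Table B. Part d) is quoted from \cite[Theorem 5.3]{KP2}.

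The main obstacle is the converse in a), namely producing a genuinely non-normal first order operator for every nonscalar $\tau$. I would first try to handle it uniformly by a highest-weight argument, taking $w$ to be the lowest weight vector of $W$ and $\beta$ the lowest noncompact root in $\Psi_n(\h)$. If that does not work, I would fall back on the classification.
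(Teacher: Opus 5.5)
Your reduction to comparing $\mathcal V^{(1)}\cap\mathcal L_{W,H}^c$ with $\mathcal V^{(1)}\cap\mathcal L_{W,H}^c\cap\mathcal U(\h_0)W$ is right. Your direct argument for the ``if'' half of a) is also valid: for one dimensional $\tau$ the system in the $\p_\h^-$-components is diagonal with nonzero entries $\tau(H_\beta)$, so $Y_\h=0$.

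The gap is the ``only if'' half of a), and with it the properness claim in b), which relies on it. What you actually prove there is that some $\tau([X,Y])$, with $X\in\p_\h^+$ and $Y\in\p_{\h_0}^-$, is nonzero, i.e.\ that $\mathcal V^{(1)}\cap\mathcal U(\h_0)W\not\subseteq\mathcal L_{W,H}^c$. That says a normal first order element fails to be $\p_\h^+$-annihilated. It does not say that some element of $\mathcal V^{(1)}\cap\mathcal L_{W,H}^c$ fails to be normal, which is what a) needs. You then leave the construction of such an element (projection and correction, a lowest weight guess, or the classification) as an open obstacle.

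The missing idea is equidimensionality. By (\ref{eq:Lwh}), $\mathcal L_{W,H}^c=\mathcal P(\p_{\h_0}^+,W)$, and by \ref{sub:vn} $\mathcal V^{(1)}$ is the space of linear polynomials. Hence $\dim(\mathcal V^{(1)}\cap\mathcal L_{W,H}^c)=\dim\p_{\h_0}^+\cdot\dim W=\dim(\mathcal V^{(1)}\cap\mathcal U(\h_0)W)$. Equivalently, use the degree preserving $L$-isomorphism of \ref{sub:D1iso}; your ``compare with $\p_{\h_0}^-\otimes W$'' points this way, but you never draw the conclusion. It follows that $\mathcal V^{(1)}\cap\mathcal L_{W,H}^c\subseteq\mathcal U(\h_0)W$ holds iff the two spaces are equal, iff $\mathcal V^{(1)}\cap\mathcal U(\h_0)W\subseteq\mathcal L_{W,H}^c$. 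By $L_X^\tau[Y\otimes w]=[1\otimes\tau([X,Y])w]$, this last condition means $[\p_\h^+,\p_{\h_0}^-]\subseteq Ker(\tau)$. Now $[\p_\h^+,\p_{\h_0}^-]$ is nonzero (otherwise $\q_\C\cap\k_\C$, spanned by it and its conjugate, would vanish). It lies in $\k_{ss}$, and $Ker(\tau)$ is an ideal, so simplicity of $\k_{ss}$ forces $\tau$ to be one dimensional. This is the paper's argument in the proof of Proposition~\ref{sub:7}. It gives both directions of a) at once and, together with the nonzero part, the properness in b), with no construction of a non-normal vector.

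Two smaller points. In b), Section~\ref{sec:notation} only tells you that $(\q_\C\cap\k_\C)^\pm$ consist of nilpotent elements. In general $[\p_\h^+,\p_{\h_0}^-]=\q_\C\cap\k_\C$, which is not nilpotent. To apply Engel's theorem you need the identification $[\p_\h^+,\p_{\h_0}^-]=(\q_\C\cap\k_\C)^-$ under the vanishing triple bracket, which the paper checks pair by pair in \ref{sub:bracketphh0}; citing Proposition~\ref{prop:existencenormal2} covers this. Once you have $w_0$, every $[Y\otimes w_0]$ with $Y\in\p_{\h_0}^-$ works, so no weight choice is needed.

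In c), you cannot expect the linear system to have only the trivial solution for every pair with nonvanishing triple bracket. Most such pairs are in Table A, e.g.\ $(\mathfrak{sp}(n,\R),\mathfrak{u}(m,n-m))$ and the $\e_{7(-25)}$ triples. There a set $\Sigma\subset\Psi_n(\h_0)$ with nilpotent span produces nonzero normal first order operators. The vanishing is only established for the \S\ entries of Table B.
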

A proof of the Proposition follows from Proposition~\ref{prop:nu1nonzero} Table A and Proposition~\ref{sub:7}. We recall that among  the admissible holomorphic pairs $(G,H)$, and  $\g$ is simple,   the unique   Lie algebra so that $\k_{ss}$   is a direct sum of two simple ideals is $\g \cong \mathfrak{su}(m,n), m>1,n>1$, in this case $\k_{ss} \cong \mathfrak{su}(m)+\mathfrak{su}(n)$.    These follows from   the list  \cite[Table 3]{OV4}\cite[Table 2]{KOadv}. To complement the previous Proposition
we  present, for the totality of  symmetric pairs $(\mathfrak{su}(m,n),\h), m>1,  n>1 $, some results  on first order normal symmetry breaking operators.
\begin{prop}\label{prop:first} The hypothesis is: $\g \cong \mathfrak{su}(m,n), m>1,n>1$, $(G,H)$ a symmetric pair, $(G,H_0)$ associated pair, $H/L \rightarrow G/L$ is a holomorphic immersion, $(\tau, W)$(resp. $(\sigma, Z)$) lowest $K$-type (resp. lowest $L$-type) of holomorphic Discrete Series $V_\tau^G$ (resp. $V_\sigma^H$).

d1)  Every first order symmetry breaking operator from \\ \phantom{xxxccccc}$V_\tau^{SU(m,n)} $ into $ V_\sigma^{SU(p,q)\times SU(m-p,n-q)},\,\,1\leq p <m, 1\leq q <n$ \\ is represented via a normal derivative differential operator if and only if  $(\tau,W)$ is a scalar representation.

d2)   Every first order symmetry breaking operator from $V_{\tau}^{SU(m,n)} $ to $ V_{\sigma}^{SU(m,q)\times SU(n-q)}, \, 1\leq q <n$ is represented via a normal derivative differential operator if and only if  $(\tau,W)$ is a scalar representation or $Ker(\tau)$ contains one of the simple ideals of $k_{ss}$. Equivalently, if and only if $(\tau, W)$ restricted to   $\k_{ss}$ is not a faithful representation.

d3) When $(\tau, W)$  restricted to  $\k_{ss}\cong  \mathfrak{su}(m)+\mathfrak{su}(n)$ is a faithful representation, the subspace of first order symmetry breaking operators represented  via a normal differential operator from $V_\tau^{SU(m,n)}\rightarrow V_\sigma^{SU(p,q)\times SU(m-p,n-q)}$ is nonzero and a proper subspace of the space of  first order symmetry breaking  operators.

d4) Every first order symmetry breaking operator from $ V_\tau^{SU(n,n)} \rightarrow V_\sigma^{Sp(n,\R)}$, or, from  $  V_\tau^{SU(n,n)}  \rightarrow V_\sigma^{SO^\star(2n)}$, is represented via a normal derivative operator if and only if $(\tau,W)$ is a scalar representation.

d5) For    a  non scalar representation    $(\tau, W)$  of $\k\cong \mathfrak z_\k +\mathfrak{su}(n)+\mathfrak{su}(n)$, and  first order symmetry breaking operators from $ V_\tau^{SU(n,n)} \rightarrow V_\sigma^{Sp(n,\R)}$, or, from $  V_\tau^{SU(n,n)}  \rightarrow V_\sigma^{SO^\star(2n)}$, we have that   the subspace of first order symmetry breaking operators  represented by means of a normal differential operators is nonzero and a proper subspace of the space of the first order symmetry breaking  operators.
\end{prop}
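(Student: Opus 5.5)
Everything reduces to the algebraic criterion of \ref{sub:zs}. Every first order symmetry breaking operator is normal exactly when
$$\mathcal L_{W,H}^c\cap\mathcal V^{(1)}=\mathcal L_{W,H}^c\cap\mathcal U(\h_0)W\cap\mathcal V^{(1)},$$
and the subspace of normal first order operators is nonzero (resp. proper) exactly when the right hand side is nonzero (resp. strictly smaller than the left). This uses that $Z_S$ is $L$-irreducible, so either $Z_S\subset\mathcal U(\h_0)W$ or $Z_S\cap\mathcal U(\h_0)W=\{0\}$.

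An element $Y=\sum_{\gamma\in\Psi_n(\g)}X_{-\gamma}\otimes w_\gamma$ lies in $\mathcal L_{W,H}^c$ iff $L^\tau_X Y=0$ for all $X\in\p_\h^+$. By the formulas in \ref{sub:jv} (equivalently the Verma module relations) this means
$$\sum_\gamma \tau([X,X_{-\gamma}])\,w_\gamma=0 .$$
Split $Y=Y_\h+Y_{\h_0}$ according to $\p_\g^-=\p_\h^-\oplus\p_{\h_0}^-$. Then $[\p_\h^+,\p_{\h_0}^-]\subset\q_\C\cap\k_\C$ and $[\p_\h^+,\p_\h^-]\subset\l_\C$, so the condition becomes a system in $\k_\C$. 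Normality is $Y_\h=0$, so the question is whether $\tau([\p_\h^+,\p_{\h_0}^-])$ annihilates the relevant $w_\gamma$'s. For $\g=\mathfrak{su}(m,n)$ we have $\k_{ss}=\mathfrak{su}(m)+\mathfrak{su}(n)$. For each listed $\h$ I would compute $[\p_\h^+,\p_{\h_0}^-]$ explicitly in matrix form, identifying $\p^+\cong M_{m\times n}(\C)$ and $\p_\h^+$, $\p_{\h_0}^+$ as the block or (skew)symmetric pieces.

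For d1) and d4) I expect $[\p_\h^+,\p_{\h_0}^-]$ to have nonzero projection onto both simple ideals of $\k_{ss}$, e.g.\ $\mathfrak{su}(p,q)\times\mathfrak{su}(m-p,n-q)$ where $\q\cap\k$ meets both $\mathfrak{su}(m)$ and $\mathfrak{su}(n)$. The scalar direction then uses that $\tau(\k_{ss})=0$ kills the constraint. The converse uses the same Engel type argument as in Proposition~\ref{prop:nonormal}: $(\q_\C\cap\k_\C)^-$ is an abelian nilpotent subalgebra of $\k_{ss}$, so if $\tau$ is nontrivial on a simple ideal, a suitable $w$ (a lowest weight vector for the nilpotent action) produces a first order $Y$ in $\mathcal L_{W,H}^c$ with $Y_\h\ne 0$ that cannot be corrected into $\mathcal U(\h_0)W$. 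This argument is to be carried out as in \ref{sub:7}.

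For d2), with $\h=\mathfrak{su}(m,q)+\mathfrak{su}(n-q)+\mathfrak u(1)$, the bracket $[\p_\h^+,\p_{\h_0}^-]$ lands only in $\mathfrak{su}(n)$, which is the triple bracket vanishing case of \ref{sub:triplepairszero}. So $\ker\tau\supset\mathfrak{su}(n)$ gives normality. When $\ker\tau\supset\mathfrak{su}(m)$ instead, I would use the $m\leftrightarrow n$ symmetric role, i.e.\ Proposition~\ref{prop:allsbarenormal} b). If the restriction to $\k_{ss}$ is faithful, the Engel construction again gives a non-normal operator.

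For d3) and d5), nonvanishing follows by exhibiting an explicit normal first order element: take $Y=\sum_{\gamma\in\Psi_n(\h_0)}X_{-\gamma}\otimes w_\gamma$ with $w_\gamma$ chosen, via Engel's theorem applied to the nilpotent algebra $\tau([\p_\h^+,\p_{\h_0}^-])$, as simultaneous kernel vectors, in the manner of Proposition~\ref{prop:existencenormal2}. Properness is the non-normal element from the previous step. The main obstacle is the root bookkeeping showing that the Engel-produced non-normal $Y$ really lies in $\mathcal L_{W,H}^c$, which requires solving the coupled equations for $Y_\h$ and $Y_{\h_0}$. I would handle it by weight considerations, taking extremal weights of $W$.
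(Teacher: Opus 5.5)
Your reduction to comparing $\mathcal V^{(1)}\cap\mathcal L_{W,H}^c$ with $\mathcal V^{(1)}\cap\mathcal L_{W,H}^c\cap\mathcal U(\h_0)W$ is correct. What is missing is the dimension count $\dim(\mathcal V^{(1)}\cap\mathcal L_{W,H}^c)=\dim(\mathcal V^{(1)}\cap\mathcal U(\h_0)W)=\dim\p_{\h_0}^+\cdot\dim W$, which follows from $\mathcal L_{W,H}^c=\mathcal P(\p_{\h_0}^+,W)$ in \ref{eq:Lwh}. This count drives the paper's proof via Proposition~\ref{sub:7}. With it, the chain of equivalences is:
\begin{itemize}
\item ``every first order operator is normal'' means $\mathcal V^{(1)}\cap\mathcal L_{W,H}^c\subseteq\mathcal U(\h_0)W$;
\item by equal dimensions this is equivalent to the reverse inclusion $\mathcal V^{(1)}\cap\mathcal U(\h_0)W\subseteq\mathcal L_{W,H}^c$;
\item by $L_X^\tau[Y\otimes w]=[1\otimes\tau([X,Y])w]$ this is equivalent to $[\p_\h^+,\p_{\h_0}^-]\subseteq Ker(\tau)$.
\end{itemize}
Everything then reduces to the ideal generated by $[\p_\h^+,\p_{\h_0}^-]$, which is all of $\k_{ss}$ in d1 and d4 (\ref{sub:b)}, \ref{sub:forb1}). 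Properness in d3 and d5 is then immediate.

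Without this count, both of your directions have holes. In the scalar case, $\tau(\k_{ss})=0$ only gives $\mathcal V^{(1)}\cap\mathcal U(\h_0)W\subseteq\mathcal L_{W,H}^c$, which is the wrong inclusion: you must show that $Y\in\mathcal L_{W,H}^c$ forces $Y_\h=0$. For the converse and for properness you need an element of $\mathcal V^{(1)}\cap\mathcal L_{W,H}^c$ with $Y_\h\neq 0$, and you never construct one. Engel's theorem produces common kernel vectors, which give \emph{normal} elements. The ``coupled equations'' you call the main obstacle need not be solved. Once $\tau([X,Y])w\neq 0$ for some $X\in\p_\h^+$, $Y\in\p_{\h_0}^-$, $w\in W$, the space $\mathcal V^{(1)}\cap\mathcal L_{W,H}^c$ has the same dimension as $\mathcal V^{(1)}\cap\mathcal U(\h_0)W$ but differs from it, so it cannot be contained in it.

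Two further steps fail.

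\textbf{d3 and d5 (nonvanishing).} Here $[\p_\h^+,\p_{\h_0}^-]$ is not nilpotent. These pairs do not have vanishing triple bracket (they are not in \ref{sub:triplepairszero}), and $[\p_\h^+,\p_{\h_0}^-]=\q_\C\cap\k_\C$ contains opposite root vectors. In d3 it contains both $E_{\delta_b-\delta_j}$ and $E_{\delta_j-\delta_b}$ for $j\le q<b$. So it generates $\k_{ss}$, and its common kernel in $W$ is $\{0\}$ for nonscalar $\tau$; Proposition~\ref{prop:existencenormal2} does not apply. The paper instead supports $Y$ on a subset $\Sigma\subset\Psi_n(\h_0)$ for which $\{E_{\beta-\gamma}:\beta\in\Psi_n(\h),\gamma\in\Sigma\}$ spans an abelian algebra, and applies Engel only to that set (Proposition~\ref{prop:nu1nonzero}, Table A). For example, $\Sigma=\Psi_n(\mathfrak{su}(p,n-q))$ works in d3, and a singleton works in d5.

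\textbf{d2 (the case $\mathfrak{su}(m)\subseteq Ker(\tau)$).} The swap $m\leftrightarrow n$ does not preserve the pair $(\mathfrak{su}(m,n),\mathfrak{su}(m,q)+\mathfrak{su}(n-q)+\mathfrak u(1))$; it produces a pair of type $\mathfrak{su}(l)+\mathfrak{su}(m-l,n)$. By \ref{sub:b)}, $[\p_\h^+,\p_{\h_0}^-]$ is spanned by the $E_{\delta_b-\delta_j}$ with $j\le q<b$, and these generate exactly $\mathfrak{su}(n)_\C$. So if $\tau$ is nontrivial on $\mathfrak{su}(n)$, non-normal first order operators exist even when $\mathfrak{su}(m)\subseteq Ker(\tau)$. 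What can actually be proved is the criterion with the specific ideal $\mathfrak{su}(n)$. Proposition~\ref{prop:allsbarenormal} b) cannot give more, because it rests on the same inclusion $[\p_\h^+,\p_{\h_0}^-]\subseteq Ker(\tau)$.
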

The proof of the Proposition follows from Proposition~\ref{prop:nu1nonzero} Table A and Proposition~\ref{sub:7}.
\begin{cor} For any semisimple, not necessary simple, Lie algebra,  and $(\tau, W)$ a one dimensional representation for $\k$, we have that every first order symmetry breaking operator from $V_\tau^G$ is represented via a homogeneous normal derivative operator of degree one.
\end{cor}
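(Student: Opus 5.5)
The plan is to argue directly at the level of the $L$-module $\mathcal L_{W,H}^c\cap\mathcal V^{(1)}$ rather than through the classification. By the discussion in \ref{sub:degree}, a first order symmetry breaking operator $S$ satisfies $Z_S\subset \mathcal L_{W,H}^c\cap \mathcal V^{(1)}$. By \ref{sub:tech} and \cite[Proposition 6.1]{OV3}, $S$ is represented by a normal derivative operator exactly when $Z_S\subset \mathcal U(\h_0)W$. The first order normal operators are then homogeneous of degree one, by the homogeneity argument of \ref{sub:degree}, since $res_L(\mathcal V^{(0)})$ and $res_L(\mathcal V^{(1)})$ are disjoint. Hence it suffices to prove the inclusion
\[
\mathcal L_{W,H}^c\cap \mathcal V^{(1)}\subset \mathcal U(\h_0)W\cap\mathcal V^{(1)}
\]
when $\dim W=1$.

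Write an element of $\mathcal V^{(1)}$ as $Y=\sum_{\gamma\in\Psi_n(\g)}X_{-\gamma}\otimes w_\gamma$, with $X_{-\gamma}$ running over a basis of $\p_\g^-=\p_\h^-\oplus\p_{\h_0}^-$. Since $\tau$ is a character, $W=\C w_0$, so $Y=L^\tau_{X}w_0$ for a unique $X=X_\h+X_{\h_0}\in \p_\g^-$, with $X_\h\in\p_\h^-$ and $X_{\h_0}\in\p_{\h_0}^-$. Uniqueness comes from the isomorphism $S(\p_\g^-)\otimes W\cong V_{K-fin}$ in \ref{eq:1}.

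For $Z\in\p_\h^+$, a direct computation in the Verma-module picture gives
\[
L^\tau_Z L^\tau_X w_0 = L^\tau_{[Z,X]}w_0 + L^\tau_X L^\tau_Z w_0 = \tau([Z,X])\,w_0 ,
\]
because $\p_\g^+$ annihilates $W$ and $[Z,X]\in\k_\C$. So the condition $Y\in\mathcal L_{W,H}^c$, i.e. $L_Z^\tau Y=0$ for all $Z\in \p_\h^+$, becomes
\[
d\tau([Z,X_\h])+d\tau([Z,X_{\h_0}])=0 \quad\text{for all } Z\in\p_\h^+ .
\]
Now $[\p_\h^+,\p_{\h_0}^-]\subset \k_\C\cap\q_\C$, and $\k_\C\cap\q_\C\subset\k_{ss}$, since $\z_\k\subset\l$ as recalled in Section~\ref{sec:notation}. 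A one dimensional $\tau$ kills $\k_{ss}$, so the second term vanishes. Consequently $d\tau([Z,X_\h])=0$ for all $Z\in\p_\h^+$.

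The key step, and the one I expect to be the main obstacle, is to deduce from this that $X_\h=0$. On $[\p_\h^+,\p_\h^-]\subset\l_\C$ the character $d\tau$ is a nonzero multiple of the Killing form against the central element of $\k$, which lies in $\z_\k\subset \l$ and acts by a nonzero scalar on $\p_\h^\pm$. The pairing $(Z,X)\mapsto d\tau([Z,X])$ should therefore be a nondegenerate multiple of $b$ on $\p_\h^+\times\p_\h^-$. For this I first need $d\tau$ restricted to $\z_\k$ to be nonzero. This is where I would invoke the standing hypothesis that $V_\tau^G\neq 0$: a character of $K$ vanishing on $\z_\k$ would be trivial, and the trivial character gives no holomorphic Discrete Series.

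For a merely semisimple $\g$, I would run the argument factor by factor on the simple ideals of $\g$. On each factor the central element acts on its own $\p^\pm$ by a nonzero scalar, by the holomorphic Discrete Series condition on that factor. Hence $X_\h=0$, so $Y=L^\tau_{X_{\h_0}}w_0\in\mathcal U(\h_0)W$. This gives the inclusion, and each first order $S$ is then a homogeneous normal derivative operator of degree one.
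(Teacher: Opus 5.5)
\textbf{The non-simple case has a genuine gap, and the statement itself fails there.} Your argument is sound whenever $\z_\k\subset\l$, in particular for $\g$ simple. Two of your steps use $\z_\k\subset\l$: killing $d\tau([Z,X_{\h_0}])$ via $\k_\C\cap\q_\C\subset\k_{ss}$, and the ``factor by factor'' reduction to simple ideals.

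When $\sigma$ interchanges two simple ideals, as in the tensor product setting of \ref{sub:tensorpro}, both steps break. The ideals are not $\sigma$-stable, so $\p_\h^\pm$ and $\p_{\h_0}^\pm$ do not split along them. Moreover $\z_\k=\z_\l\oplus(\z_\k\cap\q)$, hence $[\p_\h^+,\p_{\h_0}^-]=\q_\C\cap\k_\C\supset(\z_\k)_\C\cap\q_\C$. A character of $\k$ need not vanish there, so the two terms in your condition can cancel with $X_\h\neq 0$.

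Concretely, take $G_0=SU(1,1)$ and $\tau=\chi_{k_1}\otimes\chi_{k_2}$. Let $E_i\in\p^+$, $F_i\in\p^-$ be the root vectors of the $i$-th factor, normalized by $\tau([E_i,F_i])=k_i$. Then $\p_\h^+=\C(E_1+E_2)$ and $\p_{\h_0}^-=\C(F_1-F_2)$. One computes $L^\tau_{E_1+E_2}[(aF_1+bF_2)\otimes w_0]=(ak_1+bk_2)w_0$. Hence $\mathcal V^{(1)}\cap\mathcal L^c_{W,H}=\C[(k_2F_1-k_1F_2)\otimes w_0]$, while $\mathcal V^{(1)}\cap\mathcal U(\h_0)W=\C[(F_1-F_2)\otimes w_0]$. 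So for $k_1\neq k_2$ the first order symmetry breaking operator (the first Rankin--Cohen bracket) is not a normal derivative operator.

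This is exactly the paper's own Proposition~\ref{prop:nu1fortensor} and the $SU(1,1)$ example after it: equality also requires $(\z_\k)_\C\cap\q_\C\subset\ker\tau$. No argument can give the statement in the generality ``any semisimple $\g$''. You should restrict to the case $\z_\k\subset\l$, e.g.\ $\g$ simple, or $\sigma$ stabilizing every simple ideal, where your factor-by-factor reduction is legitimate. Alternatively, add the hypothesis that $\tau$ is trivial on $(\z_\k)_\C\cap\q_\C$.

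\textbf{In the restricted range your proof is correct, by a different route from the paper.} The paper, in the converse part of Proposition~\ref{sub:7}, proves the opposite inclusion $\mathcal V^{(1)}\cap\mathcal U(\h_0)W\subset\mathcal V^{(1)}\cap\mathcal L^c_{W,H}$. That needs only $\tau([\p_\h^+,\p_{\h_0}^-])=0$. It then closes by equality of dimensions: both spaces have dimension $\dim\p_{\h_0}^+\dim W$, via $\mathcal L^c_{W,H}=\mathcal P(\p_{\h_0}^+,W)$.

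You instead prove directly the inclusion actually needed. Your tool is the nondegeneracy of $(Z,X)\mapsto d\tau([Z,X])=c\,b(Z,X)$ on $\p_\h^+\times\p_\h^-$, with $c\neq 0$ exactly because $d\tau|_{\z_\k}\neq 0$. This avoids the dimension count and shows explicitly where nontriviality of the character on the center is used.
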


\subsection{Higher order symmetry breaking operators} In this subsection we show that among the representations of symmetry breaking operator via a differential operators, we may choose a homogeneous differential operator.  For this, we first determinate  that the $L$-isotypic component  of the restriction of representation of $H$ in the $H$-isotypic component  $V_\tau^G[V_\sigma^H]$, is  homogeneous, in the sense that it lies on a subspace of a homogeneous component $\mathcal V^{(\cdot)}$. Next, we analyse some consequences.
\subsubsection{Isotypic components for $L$}\label{sub:disjoint} In order to analyze the $L$-types of $res_H(V_\tau^G)$, among the subspaces   we have to consider are:

$(V_\tau^G)_{K-fin}[(V_\sigma^H)_{L-fin}][Z] \subseteq (V_\tau^G)_{K-fin}[Z] \subseteq (V_\tau^G)^{\infty}][Z]\subseteq V_\tau^G[Z]$.

\noindent
In \cite{Kobdd3} it is shown that the $H$-admissibility hypothesis yields the second and third inclusion are actual  equalities. Moreover,   the subspace $(V_\tau^G)_{K-fin}[(V_\sigma^H)_{L-fin}][Z]$ is equal to $(V_\tau^G)_{K-fin}[(V_\sigma^H)_{L-fin}] \cap V_\tau^G[Z]$ and is the subspace   generated by the lowest $L$-type subspace of each $H$-factor of $V_\tau^G$ isomorphic to $V_\sigma^H$.
   To follow, we show the subspace $(V_\tau^G)_{K-fin}[(V_\sigma^H)_{L-fin}][Z]$ is contained in  a homogeneous subspace: \begin{lem} We assume $\g$ is a simple Lie algebra and   $H$-admissibility of $(L_\cdot^\tau , V_\tau^G))$. Then, the isotopic component $(V_\tau^G)_{K-fin}[(V_\sigma^H)_{L-fin}][Z]$, determined by the lowest $L$-types  of the representation of $(\h,L)$  in the  isotypic component $(V_\tau^G)_{K-fin}[(V_\sigma^H)_{L-fin}]$   is contained in a homogeneous subspace $\mathcal V^{(k)} =L_{ S^k(\p_\g^- )}^\tau ( W) .$ \end{lem}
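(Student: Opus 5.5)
The plan is to use the center of $\k$, which the paper has already shown lies inside $\l$ under our hypotheses, as a grading operator. Fix a generator $Z_0$ of $\z_\k$, normalized so that $ad(Z_0)$ acts on $\p_\g^+$ by the scalar $i$, and hence on $\p_\g^-$ by $-i$. This normalization is available because $\p_\g^\pm$ are the $\pm i$-eigenspaces of the element defining the complex structure. Since $\z_\k\subset\l$, the element $Z_0$ lies in $\l$, and it commutes with $\l\subset\k$. So $Z_0$ is central in $\l$.

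\emph{Step 1: $Z_0$ acts on each $\mathcal V^{(k)}$ by a scalar.} Since $(\tau,W)$ is irreducible, Schur's lemma gives $\tau(Z_0)=\chi\,Id_W$ for some $\chi\in\C$. We work with the $\g$-isomorphism $V_{K-fin}\cong S(\p_\g^-)\otimes W$ from (\ref{eq:1}), so that $V_{K-fin}=\bigoplus_{k\ge 0}\mathcal V^{(k)}$ with $\mathcal V^{(k)}=L^\tau_{S^k(\p_\g^-)}(W)$. For $D=Y_1\cdots Y_k$ with $Y_j\in\p_\g^-$ and $w\in W$, we have
\[
L^\tau_{Z_0}L^\tau_D w=L^\tau_{[Z_0,D]}w+L^\tau_D\tau(Z_0)w=(\chi-ik)\,L^\tau_D w .
\]
Hence $L^\tau_{Z_0}$ acts on $\mathcal V^{(k)}$ by the scalar $\chi-ik$. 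These scalars are pairwise distinct for distinct $k$. Therefore each $\mathcal V^{(k)}$ is exactly the $(\chi-ik)$-eigenspace of $L^\tau_{Z_0}$ in $V_{K-fin}$.

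\emph{Step 2: the isotypic component lies in a single eigenspace.} Let $(\sigma,Z)$ be irreducible for $L$. Since $Z_0$ is central in $\l$, Schur's lemma gives $\sigma(Z_0)=c_\sigma Id_Z$. Every vector in the $L$-isotypic component $(V_\tau^G)_{K-fin}[Z]$ is therefore an eigenvector of $L^\tau_{Z_0}$ with eigenvalue $c_\sigma$. By Step 1 this forces $(V_\tau^G)_{K-fin}[Z]\subset\mathcal V^{(k)}$ for the unique $k$ with $\chi-ik=c_\sigma$. If no such $k$ exists, the isotypic component is zero. In particular this yields the weaker inclusion claimed in the Lemma:
\[
(V_\tau^G)_{K-fin}[(V_\sigma^H)_{L-fin}][Z]\subseteq (V_\tau^G)_{K-fin}[Z]\subseteq\mathcal V^{(k)} .
\]
The first inclusion is by definition, and both spaces consist of $K$-finite vectors by $H$-admissibility \cite{Kobdd3}.

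\emph{Main obstacle.} The only delicate point is the justification that $\z_\k\subset\l$ and that $Z_0$ acts on $\p_\g^-$ by a single nonzero scalar. Both facts rest on $\g$ simple and on the holomorphy of $H/L\hookrightarrow G/K$, and both were recorded in Section~\ref{sec:notation} (citing \cite[3.6]{Kob9}). With these facts in hand, the rest is the eigenvalue bookkeeping above. One should also check that the identification (\ref{eq:1}) is compatible with the $\k$-action. This holds because $L^\tau_x$ for $x\in\k$ corresponds to $ad(x)\otimes1+1\otimes\tau(x)$ on $S(\p_\g^-)\otimes W$.
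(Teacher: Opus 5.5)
Your proof is correct and is essentially the paper's argument: the paper also uses that $\z_\k\subset\l$ (from $\g$ simple and the holomorphic embedding), that $\z_\k$ acts on $S^k(\p_\g^-)\otimes W$ by the character $k\beta+\chi_\tau$ (which separates the degrees), and Schur's lemma for $\z_\l\supseteq\z_\k$ on an $L$-isotypic component. Your version, via the eigenvalues $\chi-ik$ of a single generator $Z_0$, is a concrete normalization of the same bookkeeping, and it makes explicit the nonvanishing of $\beta$ that the paper uses only implicitly.
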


 \begin{proof} The Lemma follows after we show that for $k \not= t$,  $S^k(\p_\g^-)\otimes W$   and  $S^t(\p_\g^-)\otimes W$ have no irreducible $L$-factor in common. This happens, owing to the subspace of $K$-finite vectors in $V_\tau^G$, $(V_\tau^G)_{K-fin}$, is $K$-isomorphic to $S(\p_\g^-)\otimes W=\oplus_{k\geq 0} \, S^k(\p_\g^-)\otimes W$. A particular  $K$-isomorphism is $L_D^\tau (D\otimes w)\leftarrow D\otimes W \in S(\p_\g^-)\otimes W$ and in \ref{sub:vn} we have defined  $\mathcal V^{(k)}= L_{S^k(\p_\g^-)}^\tau(W)$. The action of $K$ in $S(\p_\g^-)\otimes W$ is
 the one that is obtained via the Adjoint representation of $K$ in $\p_\g^-$ tensor with the action $\tau$ of $K$ in $W$. Now, since $G/K$ and $\g$ is a simple Lie algebra the representation $K$ in $\p_\g^+$ is irreducible and the center $\z_\k$ of $\k$ is one dimensional. Shur's lemma gives  $\z_\k$ acts by one dimensional representation either on $\p_\g^-$ or $W$, they are respectively: $\z_\k \ni Y \mapsto \beta (Y), Y \mapsto \chi_\tau (Y)$. Thus, $\z_\k $ acts in $S^k(\p_\g^-)\otimes W$ by the representation $Y \mapsto k\beta (Y)+ \chi_\tau (Y)$. Whence, we conclude: the representations of $K$ in  $S^k(\p_\g^-)\otimes W$ and  $S^t(\p_\g^-)\otimes W$ for $t\not= k$ are {\it not   equivalent} due that respective representations of $\z_\k$ are not equivalent.

 To follow we consider a holomorphic immersion $H/L \rightarrow G/K$ and a holomorphic representation $V_\tau^G$ of $G$ so that its restriction to $H$ is admissible. We claim: {\it for each $L$-isotypic component $V_\tau^G [V_\sigma^H][Z]$, there exists $k$ so that $V_\tau^G [V_\sigma^H][Z]\subseteq S^k(\p_\g^-)\otimes W$}. Indeed, the hypothesis holomorphic immersion, $\g$ is a simple Lie algebra and $H$-admissibility of $res_H(V_\tau^G) $ implies $\z_\k \subset \l $. For a proof see \cite[Table 2.1.3]{Kob9}. Since we are considering the $L$-isotypic component $V_\tau^G [V_\sigma^H][Z]$ we have that center of $\l$, $\z_\l$, acts by  a {\it fixed} one dimensional representation   on $V_\tau^G [V_\sigma^H][Z]$. Since, $\z_\k \subseteq \z_\l$, the previous observations on the action of $\z_\k$ on $S^\cdot(\p_\g^-)\otimes W$ yields $V_\tau^G [V_\sigma^H][Z]\subseteq S^k(\p_\g^-)\otimes W$ for some $k$. \end{proof}

\subsubsection{A structural fact on symmetry breaking operators}
 We deduce that every symmetry breaking operator, under our hypothesis, admits a representation via a homogeneous differential operator
 \begin{prop}\label{prop:diffarehomo}For $\g$ a simple Lie algebra, every symmetry breaking linear operator from $V_\tau^G$ onto $V_\sigma^H$ is represented by a homogeneous differential operator.
 \end{prop}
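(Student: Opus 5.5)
The plan is to combine the technique of \ref{sub:tech} with the uniqueness statement Proposition~\ref{prop:uniqness} and the Lemma of \ref{sub:disjoint}. Fix a symmetry breaking operator $S:V_\tau^G\rightarrow V_\sigma^H$. By Proposition~\ref{prop:uniqness} there is a unique linear map $Z\ni z\mapsto D_z=\sum_i D_z^i\otimes w_i\in \mathcal U(\p_\g^-)\otimes W$ with $K_S(\cdot,e)^\star z=\sum_i L^\tau_{D_z^i}(K_\tau(\cdot,e)^\star w_i)$. Inserting this into formula \ref{eq:sasdfo} expresses $S$ as $S(f)(h)=\sum_p\sum_i (R_{\check{(D_{z_p}^i)}^\star}f(h),w_i)_W\, z_p$, with each $\check{(D_{z_p}^i)}^\star\in\mathcal U(\p_\g^+)$. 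So everything reduces to showing that every $D_z^i$ can be taken in a single graded piece $S^k(\p_\g^-)$, with $k$ independent of $z$ and $i$.

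The key step is to locate the vectors $K_S(\cdot,e)^\star z$. They lie in $V_\tau^G[V_\sigma^H][Z]$ (recalled before \ref{sub:zs}), and by admissibility this isotypic component consists of $K$-finite vectors, so it equals $(V_\tau^G)_{K-fin}[(V_\sigma^H)_{L-fin}][Z]$. The Lemma of \ref{sub:disjoint} then places it inside one homogeneous subspace $\mathcal V^{(k)}=L^\tau_{S^k(\p_\g^-)}(W)$. Concretely, $\z_\k\subset\z_\l$ acts on $S^j(\p_\g^-)\otimes W$ by the character $j\beta+\chi_\tau$, and these characters are distinct for distinct $j$.

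Next, decompose each $D_z$ by degree, writing $D_z=\sum_j D_z^{(j)}$ with $D_z^{(j)}\in S^j(\p_\g^-)\otimes W$. Since $\p_\g^-$ is abelian, symmetrization is the identity, and the map $S(\p_\g^-)\otimes W\rightarrow (V_\tau^G)_{K-fin}$, $D\otimes w\mapsto L^\tau_D w$, is a $K$-isomorphism that respects the grading. The vector $K_\tau(\cdot,e)^\star w$ is the constant function $w$, up to identification, so the relation above says $K_S(\cdot,e)^\star z$ is the image of $D_z$. The projection onto $\mathcal V^{(j)}$ for $j\ne k$ must vanish, and injectivity then forces $D_z^{(j)}=0$ for $j\ne k$. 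Hence $D_z\in S^k(\p_\g^-)\otimes W$, which is also consistent with the uniqueness in Proposition~\ref{prop:uniqness}.

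Finally, $\check{(\cdot)}^\star$ maps $S^k(\p_\g^-)$ into $S^k(\p_\g^+)$. Right derivatives by elements of $\p_\g^+$ correspond to the holomorphic partial derivatives $\partial_{z_\alpha}$ (cf.\ \cite{L}), so $S$ is represented by a constant coefficient operator all of whose terms have order exactly $k$, that is, a homogeneous differential operator.

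The main obstacle I expect is the bookkeeping that identifies $K_\tau(\cdot,e)^\star w$ with $w\in\mathcal V^{(0)}$ and makes sure the passage through \ref{eq:sasdfo} preserves degree. The alternative would be to check directly that the $L$-equivariance of $z\mapsto D_z$ together with the $\z_\k$-character argument already forces homogeneity.
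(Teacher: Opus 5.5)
Your proposal is correct and follows the paper's own proof. The paper also combines three ingredients: the Lemma of \ref{sub:disjoint}, which uses the $\z_\k$-character argument to place $V_\tau^G[V_\sigma^H][Z]$ inside a single $\mathcal V^{(k)}$; the fact that $K_S(\cdot,e)^\star z\in V_\tau^G[V_\sigma^H][Z]$; and the technique of \ref{sub:tech}, via formula \ref{eq:sasdfo} and Proposition~\ref{prop:uniqness}, to build a homogeneous operator. The only difference is that you spell out the degree bookkeeping the paper leaves implicit, namely that injectivity of the graded isomorphism $S(\p_\g^-)\otimes W\to (V_\tau^G)_{K-fin}$ forces $D_z\in S^k(\p_\g^-)\otimes W$.
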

 \begin{proof}In \ref{sub:disjoint} we have shown the inclusion $V_\tau^G [V_\sigma^H][Z]\subseteq S^k(\p_\g^-)\otimes W$. In \cite[Lemma 4.2]{OV2},\cite{OV4} we obtained that for symmetry breaking operator $ S: V_\tau^G \rightarrow V_\sigma^H$ the kernel $K_S$ that represents $S$ as integral operator satisfies $K_S(\cdot, e)^\star z \in V_\tau^G [V_\sigma^H][Z]$ for each $z\in Z$. Thus, $Z_S \subset V_\tau^G [V_\sigma^H][Z]$, next,   the technique in \ref{sub:tech} let us to construct  by means of  \ref{eq:sasdfo},\ref{prop:uniqness} a homogeneous differential operator that represents $S$.
 \end{proof}

\subsubsection{Sufficient condition for existence of normal derivative operators} In previous paragraphs we have analyzed the existence of first order normal derivative symmetry breaking operators. To follow, we consider higher order normal derivative operators.
\begin{prop}\label{prop:existencenormal2} We assume $\g$ is a simple Lie algebra, the triple bracket $[\p_\h^+, \p_{\h_0}^-],\p_{\h_0}^-] =\{0\}$ and $(\tau, W)$ is a nonscalar representation. Then, for every $n\geq 1$,  there exists an irreducible factor $(L_\cdot^H, V_\sigma^H)$ for the restriction   of $(L_\cdot^G,   V_\tau^G) $ to $H$, and a nonzero normal derivative operator  of order $n$ in $Hom_H( V_\tau^G, V_\sigma^H)$.
\end{prop}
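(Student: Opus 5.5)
The plan is to produce, for each $n\geq 1$, a nonzero $L$-irreducible subspace $Z_n\subset \mathcal L_{W,H}^c\cap \mathcal U(\h_0)W\cap \mathcal V^{(n)}$. By \ref{sub:zs}, \ref{sub:tech} and \cite[Proposition 6.1]{OV3}, such a $Z_n\cong Z$ is the lowest $L$-type of an $H$-factor $V_\sigma^H$ of $res_H(V_\tau^G)$. The corresponding symmetry breaking operator $S$, with $Z_S=Z_n$, is then represented through \ref{eq:sasdfo} by a normal derivative operator with all $D_{p,i}\in S^n(\p_{\h_0}^-)$, hence of order exactly $n$ by \ref{sub:degree}. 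By \ref{eq:Lwh}, membership in $\mathcal L_{W,H}^c$ means annihilation by $L_X^\tau$ for all $X\in\p_\h^+$. So the task is to find nonzero $Y=\sum L^\tau_{D_j}w_j$ with $D_j\in S^n(\p_{\h_0}^-)$, $w_j\in W$, such that $L_X^\tau Y=0$ for every $X\in \p_\h^+$. Any $L$-irreducible subspace of the $L$-stable space of such $Y$ then serves as $Z_n$.

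First I compute $L_X^\tau$ on $\mathcal U(\p_{\h_0}^-)W$. Since $\p_\g^-$ is abelian and $X$ kills $W$, commuting $X$ to the right gives
\[
X\,Y_1\cdots Y_n\, w=\sum_i Y_1\cdots \widehat{Y_i}\cdots Y_n[X,Y_i]w+\sum_{i<j}Y_1\cdots\widehat{Y_i}\cdots\widehat{Y_j}\cdots Y_n\,[[X,Y_i],Y_j]w
\]
with $Y_i\in\p_{\h_0}^-$. Now $[X,Y_i]\in \k_\C\cap\q_\C$, because $X\in\h$, $Y_i\in\q$, and $\k\cap\q$ lies in $\k_{ss}$ since $\z_\k\subset\l$. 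The triple bracket hypothesis kills the second sum. The remaining condition becomes
\[
\sum_i Y_1\cdots\widehat{Y_i}\cdots Y_n\,\tau([X,Y_i])w=0,
\]
read in $S^{n-1}(\p_{\h_0}^-)\otimes W$ through the isomorphism $D_1$ of \ref{sub:D1iso}.

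To solve it, I take all $Y_i$ equal to a single $Y\in\p_{\h_0}^-$ and look for $Y^n\otimes w$. The condition reduces to $n\,Y^{n-1}\otimes \tau([X,Y])w=0$, that is, $\tau([X,Y])w=0$ for all $X\in\p_\h^+$. Next, $[\p_\h^+,Y]$ is a subspace of $\k_{ss}\cap\q_\C$. Choosing $Y=X_{-\gamma}$ a root vector (or a $\u$-weight vector when $U\neq T$) for a highest noncompact root $\gamma$ of $\h_0$ should place $[\p_\h^+,Y]$ inside the abelian subalgebra $(\q_\C\cap\k_\C)^+$ (or its negative) of nilpotent elements of $\k_{ss}$ noted in Section~\ref{sec:notation}. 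By Engel's theorem there is then a nonzero $w\in W$ annihilated by $\tau$ of this nilpotent subalgebra; for instance, $w$ a highest (or lowest) weight vector of $W$ relative to $\Psi_\g$ works. Hence $Y^n\otimes w\neq 0$ lies in the required intersection for every $n$, and its $L$-span contains an $L$-irreducible $Z_n$.

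The main obstacle is the sign and weight bookkeeping in that last step: checking that for the chosen $Y$ all brackets $[X_\beta,Y]$ with $\beta\in\Psi_n(\h)$ are raising (or all lowering) operators in $\k$, or at least lie in one nilpotent subalgebra. When $U\neq T$ I also need a simultaneous Engel vector. If the single-root choice fails, I would fall back to the subalgebra $\mathfrak n$ generated by $[\p_\h^+,\p_{\h_0}^-]$. The triple bracket hypothesis suggests that $\mathfrak n$ commutes appropriately with $\p_{\h_0}^-$, so I would apply Engel's theorem to $\mathfrak n$ acting on $W$ and take $Y\otimes w$ with $w$ an $\mathfrak n$-null vector. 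The nonscalar assumption on $\tau$ is used only to make the resulting operator genuinely of order $n$ rather than coming from the scalar case. If needed, I would also argue that the space $Z_n$ differs from the degree-$n$ piece of $\mathcal L^c_{W,H}$ for the scalar twist.
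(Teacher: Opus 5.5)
Your architecture is the paper's: produce nonzero vectors of the form $[Y^n\otimes w]$, or more generally $S^n(\p_{\h_0}^-)\otimes W_0$, in $\mathcal V^{(n)}\cap\mathcal L_{W,H}^c\cap\mathcal U(\h_0)W$, and then extract an irreducible $Z_S$. The triple bracket hypothesis is what kills the quadratic terms of \ref{eq:formulapiX}; compare Proposition~\ref{prop:exisnormal} and Note~\ref{sub:w0irred}\,b). However, the step that carries all the weight is left open.

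\textbf{The gap.} You need a nonzero $w$ with $\tau([X,Y])w=0$ for all $X\in\p_\h^+$. Engel's theorem gives such a $w$ only once you know that $\tau([\p_\h^+,Y])$, or $\tau(\mathfrak n)$, is a family of nilpotent operators spanning a nilpotent Lie algebra. This is not automatic. In \ref{sub:interequalceronosigma} the only choice is $Y=E_{-\epsilon}$, and $[\p_\h^+,Y]$ is spanned by the $E_{\pm\delta_i}$. These generate a non-nilpotent algebra, and for nonscalar $\tau$ no such $w$ exists.

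Your two attempts at this step do not close it:
\begin{itemize}
\item The ``highest noncompact root of $\h_0$'' choice is a heuristic, not an argument. Root ordering alone makes all differences $\beta-\gamma$ of one sign only when $\gamma$ is the maximal root or the noncompact simple root of $\Psi$, and nothing in your argument puts either of them in $\Psi_n(\h_0)$.
\item The fallback appeals to $\mathfrak n$ commuting with $\p_{\h_0}^-$. That property was already spent on killing the second sum, and you do not derive from it the nilpotency of $\tau(\mathfrak n)$ that Engel's theorem requires.
\end{itemize}
So the triple bracket hypothesis is never used at the Engel step, although the conclusion fails without it.

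\textbf{How the paper closes it.} The paper uses the hypothesis a second time. For every pair in the list \ref{sub:triplepairszero}, the computation \ref{sub:bracketphh0} gives $[\p_\h^+,\p_{\h_0}^-]=(\q_\C\cap\k_\C)^-$, an abelian algebra of nilpotent elements of $\k_{ss}$. Hence any $Y$ works, $W_0=\{w:\tau([\p_\h^+,\p_{\h_0}^-])w=0\}\neq\{0\}$ by Engel's theorem, and $S^n(\p_{\h_0}^-)\otimes W_0$ lies in the intersection.

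\textbf{A classification-free alternative.} You can also complete your fallback directly. Take $Z\in[\p_\h^+,\p_{\h_0}^-]\subset\q_\C\cap\k_\C\subset\k_{ss}$. Then $\mathrm{ad}Z$ kills $\p_{\h_0}^-$ by the hypothesis, and maps $\p_\h^-$ into $\q_\C\cap\p_\g^-=\p_{\h_0}^-$. Hence $\mathrm{ad}Z_1\,\mathrm{ad}Z_2=0$ on $\p_\g^-$ for all such $Z_1,Z_2$. Since $\k_\C$ acts faithfully on $\p_\g^-$ ($\g$ simple), two things follow:
\begin{itemize}
\item $[\p_\h^+,\p_{\h_0}^-]$ is abelian;
\item by preservation of Jordan decomposition in the semisimple $\k_{ss}$, it consists of nilpotent elements.
\end{itemize}
Therefore the $\tau([X,Y])$ are commuting nilpotent operators, and they have a common nonzero null vector.

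\textbf{Minor points.} The nonscalar hypothesis plays no role in existence: for scalar $\tau$ one simply has $W_0=W$. So drop the remarks about a ``scalar twist''. In the fallback you also need $Y^n\otimes w$, not $Y\otimes w$.
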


In fact, under the above hypothesis in Proposition~\ref{prop:exisnormal}, Proposition~\ref{prop:nu1nonzero} we show that $\mathcal V^{(n)}\cap \mathcal  L_{W,H}^c \cap \mathcal U(\h_0)W \not= \{0\}$. By definition $\mathcal L_{W,H}^c=\cup_{\sigma \in \hat L: V_\sigma^H \hookrightarrow V_\tau^G} \cup_{S \in Hom_H( V_\tau^G, V_\sigma^H)}  Z_S$. Besides, $Z_S$ is an irreducible $L$-module, thus, for each $n$ there exists $\sigma$ and $S$ such that $Z_S \subset \mathcal V^{(n)}\cap \mathcal  L_{W,H}^c \cap \mathcal U(\h_0)W$. Whence, we have verified   Proposition~\ref{prop:existencenormal2}.

We would like to point out, that under the hypothesis of   Proposition~\ref{prop:existencenormal2}, not every symmetry breaking operator of order $n$ is normal derivative operator as Proposition~\ref{prop:nonormal} shows. Next, we analyze this matter. For this we recall that under our setting each symmetry breaking operator is represented by a differential operator.

The size of  quotient of the set of order $n$ symmetry breaking operators
\begin{equation*}  \cup_{\sigma \in \widehat L: V_\sigma^H \hookrightarrow V_\tau^G}\,\,    Hom_H( V_\tau^G, V_\sigma^H)_{order\, n}     \end{equation*}
\noindent
  divided by the subset of symmetry breaking operators represented via normal derivative operators \begin{equation*}  \cup_{\sigma \in \widehat L: V_\sigma^H \hookrightarrow V_\tau^G}   \,\,\, Hom_H( V_\tau^G, V_\sigma^H)_{normal\,order\, n}      \end{equation*} is measured  by

\begin{center}$\mathcal V^{(n)}\cap(\sum_{ (\sigma,Z) \in \widehat L: V_\sigma^H \hookrightarrow V_\tau^G),  \,    S \in Hom_H( V_\tau^G, V_\sigma^H)  }    \, Z_S)$\end{center} divided by

\begin{center}$\mathcal V^{(n)}\cap(\sum_{ \{ (\sigma,Z) \in \widehat L: V_\sigma^H \hookrightarrow V_\tau^G),  \, S \in Hom_H( V_\tau^G, V_\sigma^H) :\, Z_S \subset \mathcal U(\h_0)W   \} } Z_S).$ \end{center}

\noindent
The above quotient    is the quotient \\ \phantom{xxxxxxxxxxxxxxxx}$(\mathcal V^{(n)}\cap \mathcal L_{W,H}^c )/\mathcal V^{(n)}\cap (\mathcal L_{W,H}^c \cap \mathcal U(\h_0)W)$.

\noindent
In \ref{sub:w0irred} we show that under the hypothesis in Proposition~\ref{prop:existencenormal2}, the subspace $\mathcal V^{(n)}\cap (\mathcal L_{W,H}^c \cap \mathcal U(\h_0)W)$ contains the irreducible $L$-module of lowest weight equal to the lowest weight of $\tau$ and formulate a conjecture on an eventual structure for $\mathcal V^{(1)}\cap (\mathcal L_{W,H}^c \cap \mathcal U(\h_0)W)$.

\smallskip
The next Proposition complements Proposition~\ref{prop:existencenormal2}.
  The Proposition  establishes a   consequence of assuming that every symmetry breaking operator of a fixed order greater than one is represented by means of normal differential operators.
\begin{prop}\label{prop:everynorfortriple}We assume there exists $k\geq 2$ so that  \begin{multline*} \cup_{\sigma \in \widehat L: V_\sigma^H \hookrightarrow V_\tau^G}\,\,    Hom_H( V_\tau^G, V_\sigma^H)_{order\, k} \\ = \cup_{\sigma \in \widehat L: V_\sigma^H \hookrightarrow V_\tau^G}\,\,    Hom_H( V_\tau^G, V_\sigma^H)_{normal\, order\, k}.\end{multline*}
Then,  $[\p_\h^+, \p_{\h_0}^-] \subset Ker(\tau)$ and $[[\p_\h^+, \p_{\h_0}^-], \p_{\h_0}^-]=\{0\}$.
\end{prop}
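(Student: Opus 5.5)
The plan is to translate the hypothesis into an equality of subspaces and then test it on explicit elements. By the discussion following Proposition~\ref{prop:existencenormal2}, the quotient of order $k$ symmetry breaking operators by the normal ones is measured by $(\mathcal V^{(k)}\cap \mathcal L_{W,H}^c)/(\mathcal V^{(k)}\cap \mathcal L_{W,H}^c\cap \mathcal U(\h_0)W)$. Since each $Z_S$ is $L$-irreducible, and either lies in $\mathcal U(\h_0)W$ or meets it trivially, the hypothesis amounts to
\[
\mathcal V^{(k)}\cap \mathcal L_{W,H}^c \subseteq \mathcal U(\h_0)W .
\]
By \eqref{eq:Lwh}, $\mathcal L_{W,H}^c=\mathcal P(\p_{\h_0}^+,W)$, i.e.\ the $W$-valued polynomials in the normal variables, annihilated by $\delta(\p_\h^+)$. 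Hence every homogeneous polynomial $p$ of degree $k$ on $\p_{\h_0}^+$ with values in $W$ must be of the form $L^\tau_D w$ with $D\in S^k(\p_{\h_0}^-)$.

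Next I compute $L^\tau_D w$ explicitly for $D=Y_1\cdots Y_k$ with $Y_i\in\p_{\h_0}^-$, using the formulas in \ref{sub:jv}:
\[
L^\tau_Y p(v)=\tau([Y,v])p(v)-\tfrac12\,\delta([[Y,v],v])p(v).
\]
For $v=v_\h+v_0$ with $v_\h\in\p_\h^+$ and $v_0\in\p_{\h_0}^+$, the elements of $\mathcal U(\h_0)W$ are polynomials on all of $\p_\g^+$. On the other hand, elements of $\mathcal L^c_{W,H}$ depend only on $v_0$. So the inclusion forces each $L^\tau_D w$ that we need to be independent of $v_\h$.

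The argument then proceeds by dimension. $\mathcal V^{(k)}\cap \mathcal L_{W,H}^c\cong S^k(\p_{\h_0}^-)\otimes W$ as $L$-modules, and the map $D_1: S^k(\p_{\h_0}^-)\otimes W\to \mathcal U(\h_0)W\cap\mathcal V^{(k)}$ is an isomorphism by \ref{sub:D1iso}. So the inclusion is an equality, and \emph{every} $L^\tau_D w$ with $D\in S^k(\p_{\h_0}^-)$ lies in $\mathcal P(\p_{\h_0}^+,W)$, i.e.\ is killed by $\delta(X)$ for all $X\in\p_\h^+$. Since $\delta(X)=-L^\tau_X$, this says $L^\tau_X L^\tau_D w=0$ for $X\in\p_\h^+$.

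I then commute $X$ through $D$ inside $\mathcal U(\g)\otimes_{\mathcal U(\k+\p^+)}W$. Using $X w=0$ and that $\p^-$ is abelian, I get
\[
X Y_1\cdots Y_k\otimes w=\sum_i Y_1\cdots\widehat{Y_i}\cdots Y_k\,[X,Y_i]\otimes w .
\]
Here $[X,Y_i]\in\k$, and moving it to the right produces terms
\[
\sum_{i\ne j}Y\cdots[[X,Y_i],Y_j]\cdots\otimes w\;+\;\sum_i Y_1\cdots\widehat{Y_i}\cdots Y_k\otimes\tau([X,Y_i])w ,
\]
where $[[X,Y_i],Y_j]\in\p^-$. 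The first sum has degree $k-1$ in $\p^-$, with exactly one factor in $[[\p_\h^+,\p_{\h_0}^-],\p_{\h_0}^-]\subset\p_\h^-$ and the remaining $k-2$ factors in $\p_{\h_0}^-$. The second sum has all $k-1$ factors in $\p_{\h_0}^-$. By the PBW-type freeness of $S(\p^-)\otimes W=S(\p_\h^-)\otimes S(\p_{\h_0}^-)\otimes W$, these two parts live in different bigraded pieces, namely $\p_\h^-$-degree $1$ versus $0$. This separation uses $k\ge2$, so that a term with $[[X,Y_i],Y_j]$ actually occurs. Hence each part must vanish separately.

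Taking all $Y_i=Y$ (polarization), the vanishing of the first part gives $k(k-1)\,Y^{k-2}[[X,Y],Y]\otimes w=0$, so $[[X,Y],Y]=0$; polarizing in $Y$ gives the triple bracket condition. The vanishing of the second part gives $k\,Y^{k-1}\otimes\tau([X,Y])w=0$, so $\tau([X,Y])=0$; linearity in $Y$ gives $[\p_\h^+,\p_{\h_0}^-]\subset Ker(\tau)$.

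The main obstacle I expect is justifying the reduction from the operator-level hypothesis to the subspace equality. In particular, one must check that the union of the $Z_S$ spans $\mathcal V^{(k)}\cap\mathcal L^c_{W,H}$, and that a $Z_S$ contained in a sum of spaces inside $\mathcal U(\h_0)W$ is itself inside it; this follows from $L$-irreducibility of $Z_S$ and the dichotomy in \ref{sub:zs}. The grading argument is routine by comparison.
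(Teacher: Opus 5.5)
Your proposal is correct and follows the paper's route. You turn the hypothesis into $\mathcal V^{(k)}\cap\mathcal L_{W,H}^c\subseteq\mathcal U(\h_0)W$, upgrade it to equality by the dimension count (both sides are $L$-isomorphic to $S^k(\p_{\h_0}^-)\otimes W$), and then run the computation of Proposition~\ref{prop:orderkisnormal}; your specialization to $D=Y^k$ followed by polarization is a tidier version of the paper's case-by-case linear-independence argument (the cross term should be $\sum_{i<j}$, giving $\binom{k}{2}$ rather than $k(k-1)$, which is harmless). The one justification to correct is why $\mathcal V^{(k)}\cap\mathcal L_{W,H}^c$ is spanned by the $Z_S$ of order $k$: it follows from $\mathcal L_{W,H}^c$ being the span of all $Z_S$ together with the homogeneity lemma of \ref{sub:disjoint} (distinct $\mathcal V^{(j)}$ share no $L$-type because $\z_\k\subset\l$ acts on $S^j(\p_\g^-)\otimes W$ by $j\beta+\chi_\tau$), not from the dichotomy in \ref{sub:zs}.
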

\begin{rmk}a) The triples $(\g, \h, \h_0)$ so that $[[\p_\h^+, \p_{\h_0}^-], \p_{\h_0}^-]=\{0\}$ are listed in \ref{sub:triplepairszero}. \\ b) Whenever $\k_{ss}$ is a simple Lie algebra, and the triple bracket vanishes,  hence, $(\g,\h)$ is one of the: $(\mathfrak{so}(2m,2), \mathfrak u(m,1)),$   $(\mathfrak{so}^\star (2n), \mathfrak u(1,n-1)),$ $(\mathfrak{so}^\star(2n),  \mathfrak{so}(2) +\mathfrak{so}^\star(2n-2)),$
$(\mathfrak e_{6(-14)}, \mathfrak{so}(2,8)+\mathfrak{so}(2)).$   the inclusion $[\p_\h^+, \p_{\h_0}^-] \subset Ker(\tau)$  forces   $\tau$ to be a one dimensional representation. \\ c)  For $\g\cong \mathfrak{su}(m,n), m>1,n>1 $ and  $\h=\s(\mathfrak{su}(m,p)+\mathfrak{su}(n-p)+\u(1))$,  then,  $\tau$ is either a scalar representation or $\tau \cong \pi^{\z_\k}\otimes \pi^{\mathfrak{su}(m)}\otimes \pi_0^{\mathfrak{su}(n)}$.
\end{rmk}
\begin{proof} The duality Theorem \cite[Theorem 1]{OV3} \cite[Theorem 4.6]{OV4} yields $\dim \mathcal V^{(k)} \cap \mathcal L_{W,H}^c= \dim \mathcal V^{(k)}\cap \, \mathcal U(\h_0)W$. Our hypothesis is that every symmetry breaking operator $S$ of order $k$ is normal, whence,   for every symmetry breaking operator $S$ of order $k$ we have  $Z_S \subset \mathcal V^{(k)}\cap \, \mathcal U(\h_0)W$. Now, the fact that  subset of $V_\tau^G$ determinate for each lowest $L$-type for the $H$-isotypic component for $res_H(V_\tau^G$ is contained in a same  homogeneous  component (\ref{sub:disjoint}) together with Proposition~\ref{prop:uniqness}, yields $\mathcal V^{(k)} \cap \mathcal L_{W,H}^c=\sum_S Z_S$ where $S$ runs over the totality of symmetry breaking operators of order $k$. Thus,  $\mathcal V^{(k)} \cap \mathcal L_{W,H}^c\subset  \mathcal V^{(k)}\cap \, \mathcal U(\h_0)W$.  The equality of dimension forces they are equal, and hence, we are in the hypothesis of  Proposition~\ref{prop:orderkisnormal} (see next section),     whence, we obtain the conclusion. \end{proof}
\begin{rmk} The same proof as the one for the Proposition yields: If every first order symmetry breaking operator is represented via a normal differential operator, then,
 $[\p_\h^+, \p_{\h_0}^-] \subset Ker(\tau)$. Hence, when $\k_{ss}$ is a simple Lie algebra,  $\tau$ is a scalar representation, and, for    $\g\cong \mathfrak{su}(m,n), m>1,n>1 $, in Prop's \ref{prop:nonormal}, \ref{prop:first}   we present an explicit answer.\\
\end{rmk}
\begin{rmk}We consider $(\tau, W)$ so that every first order symmetry breaking operator is represented via normal operators (\ref{prop:nonormal}), then, it holds the equivalence: {\it Every symmetry breaking operator is represented by normal differential operator if and only if the triple bracket $[[\p_\h^+, \p_{\h_0}^-], \p_{\h_0}^-] $ vanishes}. The statement readily  follows from Proposition~\ref{prop:equal}.
\end{rmk}
\begin{rmk} In Proposition~\ref{prop:nu1nonzero} we carry out a case by case  analysis on the triples $(\g,\h,\h_0)$ of when there exist (resp. does not exist) first order normal differential symmetry breaking operators for a given   nonscalar representation $(\tau,W)$ . We would like to point out that both Table A,  Table B in \ref{prop:nu1nonzero} are non symmetric on   $\h, \h_0 $.
\end{rmk}
The next Proposition complements a result in  \cite[Theorem 5.3]{KP2}.
\begin{prop}\label{prop:sonn-1}For the triple $(\g=\mathfrak{so}(n,2), \h=\mathfrak{so}(n-1,2)+\mathfrak{so}(1), \h_0=\mathfrak{so}(n-1)+\mathfrak{so}(1,2))$ we have

a) For a scalar representation of $K$, a  symmetry breaking operator is a normal derivative operator if and only if the order of the symmetry breaking operator is zero or one.

b)  For a non scalar irreducible representation of $K$, a  symmetry breaking operator is a normal derivative operator if and only if the order of the symmetry breaking operator is zero.
\end{prop}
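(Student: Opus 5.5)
The plan is to use the criterion recalled in \ref{sub:zs} and \ref{sub:tech}. A symmetry breaking operator $S$ is normal exactly when $Z_S\subset \mathcal U(\h_0)W$, and otherwise $Z_S\cap \mathcal U(\h_0)W=\{0\}$. By \ref{sub:disjoint} and Proposition~\ref{prop:diffarehomo}, an operator of order $k$ has $Z_S\subset \mathcal V^{(k)}\cap\mathcal L_{W,H}^c$, and $\mathcal L_{W,H}^c$ is the space annihilated by $L^\tau_{\p_\h^+}$. So everything reduces to computing
$$\mathcal V^{(k)}\cap \mathcal L_{W,H}^c\cap \mathcal U(\h_0)W \quad\text{for each } k.$$

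The structure of the triple makes this explicit. We have $\p_\g^+\cong\C^n$, the standard representation of $\mathfrak{so}(n)$ twisted by the center; $\p_\h^+$ is the span of the first $n-1$ coordinate directions, and $\p_{\h_0}^+=\C X_\gamma$ is one dimensional. Hence
$$\mathcal V^{(k)}\cap\mathcal U(\h_0)W=\{X_{-\gamma}^k w : w\in W\},$$
and it suffices to decide for which $w$ we have $L^\tau_X(X_{-\gamma}^k w)=0$ for all $X\in\p_\h^+$. Order zero operators (Section~\ref{sub:degree}) are trivially normal, so only $k\ge 1$ needs work.

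Working in the Verma model $\mathcal U(\g)\otimes_{\mathcal U(\k+\p^+)}W$, where $X w=0$, commute $X$ through $X_{-\gamma}^k$. This gives
$$X X_{-\gamma}^k w = k\,X_{-\gamma}^{k-1}\,\tau(Y)w + \tfrac{k(k-1)}{2}\,X_{-\gamma}^{k-2}\,[Y,X_{-\gamma}]\,w,$$
where $Y=[X,X_{-\gamma}]\in \k\cap\q$ is (up to the center) the rotation $e_j\wedge e_n$, and $[[X,X_{-\gamma}],X_{-\gamma}]$ is a nonzero multiple of a root vector of $\p_\h^-$. The two terms lie in linearly independent PBW components of $S^{k-1}(\p^-)\otimes W$. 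Checking this independence, and that the triple bracket $[[\p_\h^+,\p_{\h_0}^-],\p_{\h_0}^-]$ is genuinely nonzero here, is the step I expect to be the main obstacle: it requires explicit root vectors for $\mathfrak{so}(n,2)$ and care with the case distinction of $n$ even or odd (for $U$ versus $T$).

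Granting the computation, the cases follow.
\begin{itemize}
\item For $k\ge2$ the second term is nonzero whenever $w\neq 0$, so $\mathcal V^{(k)}\cap \mathcal L_{W,H}^c\cap\mathcal U(\h_0)W=\{0\}$. Thus no operator of order at least two is normal, whether $\tau$ is scalar or not.
\item For $k=1$ the condition becomes $\tau([\p_\h^+,\p_{\h_0}^-])w=0$. Let $W_0$ be the $L$-invariant space of such $w$.
\item If $\tau$ is scalar, $W_0=W$. The duality theorem (as in the proof of Proposition~\ref{prop:everynorfortriple}) gives $\dim \mathcal V^{(1)}\cap\mathcal L_{W,H}^c=\dim \mathcal V^{(1)}\cap\mathcal U(\h_0)W=1$. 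Hence $\mathcal V^{(1)}\cap\mathcal L_{W,H}^c=\C X_{-\gamma}w$, and every first order operator is normal. This proves a).
\item If $\tau$ is nonscalar: since $\p_\h^+$ spans all directions $e_j$ with $j<n$, the elements $[\p_\h^+,\p_{\h_0}^-]$ give all $e_j\wedge e_n$. Together with $\mathfrak{so}(n-1)\subset\l$ these generate $\k_{ss}=\mathfrak{so}(n)$. So $W_0$ consists of $\k_{ss}$-invariants, and irreducibility forces $W_0=0$. Hence no first order operator is normal, which proves b).
\end{itemize}
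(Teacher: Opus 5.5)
Your proposal is correct and follows essentially the paper's own route. You use the dichotomy $Z_S\subset \mathcal U(\h_0)W$ versus $Z_S\cap\mathcal U(\h_0)W=\{0\}$ to reduce to computing $\mathcal V^{(k)}\cap\mathcal L_{W,H}^c\cap\mathcal U(\h_0)W$ with $\p_{\h_0}^-$ one dimensional. You then rule out all $k\ge 2$ via (\ref{eq:formulapiX2}) and $0\neq[[X,Y],Y]\in\p_\h^-$ (Claim~\ref{prop:triplein}, \ref{sub:triplepairszero}), and settle $k=1$ by the equidimensionality argument of Proposition~\ref{sub:7}, exactly as in Lemma~\ref{lem:sonn-12}. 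The only variation is the nonscalar first-order step, where you replace the paper's parity-split root computations (\ref{sub:interequalceronosigma}, \ref{eq:so2n21}) by a uniform argument. State that step as ``the annihilator of $w$ in $\k_\C$ is a Lie subalgebra containing all $e_j\wedge e_n$, whose brackets already span $\mathfrak{so}(n-1)_\C$, so it contains $\k_{ss}$'', not ``together with $\mathfrak{so}(n-1)\subset\l$'', because $W_0$ is only $L$-stable and is not a priori annihilated by $\l$.
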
 The Proposition follows from Lemma~\ref{lem:sonn-12} and the observation:  a symmetry breaking operator $S$ is represented via normal derivative if and only if  $Z_S \subset \mathcal U(\h_0)W$.

The proof of the Proposition is carried out in \ref{lem:sonn-12}, \ref{sub:interequalceronosigma}, \ref{eq:so2n21}, \ref{eq:so2n2n-1}.

\subsubsection{Tensor product} To continue, we present a result on first order symmetry breaking operators for the tensor product of two holomorphic Discrete Series representations. A quite substantial difference from the case $\g$ simple Lie algebra we have dealt with, hinges on that the center of $\k $ is no longer a subset of the subalgebra $\l$. In this case $\z_\k=\z_\l \oplus   \z_\k \cap \q$ and both summands are one dimensional.   Compare the following statement with \ref{prop:nonormal}. For notation and hypothesis of the next Proposition see \ref{sub:tensorpro}.
\begin{prop}For the tensor product of two holomorphic Discrete Series representations of simple Lie groups restricted to the diagonal subgroup, the totality of first order normal derivative operators is equal to the subspace of first order symmetry breaking operators if and only if the lowest $K$-type is one dimensional and restricted to   $\k_\C\cap \q_\C$   is the trivial representation.
\end{prop}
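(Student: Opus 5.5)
Throughout, $G=G_1\times G_1$, $H=\Delta G_1$, $H_0=\{(g,\theta' g)\}$, $\h_0=\{(X,-X)\}$ (so $\p_{\h_0}^\pm=\{(Y,-Y):Y\in\p_1^\pm\}$ and $\k\cap\q=\{(X,-X):X\in\k_1\}$), and $W=W_1\otimes W_2$ with $\tau=\tau_1\boxtimes\tau_2$.

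The plan is to reduce the statement to the equality
\[
\mathcal V^{(1)}\cap\mathcal L_{W,H}^c=\mathcal V^{(1)}\cap\mathcal L_{W,H}^c\cap\mathcal U(\h_0)W ,
\]
exactly as in the simple case: by \ref{sub:tech}, Proposition~\ref{prop:uniqness} and the dichotomy $Z_S\subset\mathcal U(\h_0)W$ or $Z_S\cap\mathcal U(\h_0)W=\{0\}$, the first order symmetry breaking operators are exactly those with $Z_S\subset\mathcal V^{(1)}\cap\mathcal L_{W,H}^c$, and they are all normal iff this equality holds. Homogeneity of the isotypic components (the argument of \ref{sub:disjoint}) still works here, because $\z_\l$ (the diagonal of $\z_{\k_1}\oplus\z_{\k_1}$) acts on $S^k(\p_\g^-)\otimes W$ by $k\beta+\chi_\tau$. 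Hence the isotypic components again sit in single $\mathcal V^{(k)}$.

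Next I compute both sides explicitly. Write $Y\in\mathcal V^{(1)}$ as $Y=\sum_i (A_i,B_i)\otimes w_i$ with $A_i,B_i\in\p_1^-$. For $X\in\p_1^+$, the formula for $L_x^\tau$ gives that $L^\tau_{(X,X)}$ applied to $Y$ is
\[
\sum_i\bigl(\tau_1([X,A_i])\otimes 1+1\otimes\tau_2([X,B_i])\bigr)w_i .
\]
So $\mathcal V^{(1)}\cap\mathcal L^c_{W,H}$ is the kernel of the map $\p^-_1\otimes\p_1^-\otimes W\to(\p_1^+)^\star\otimes W$ just described. The normal part is the subspace with $B_i=-A_i$; on it the condition becomes $\sum_i\bigl(\tau_1([X,A_i])\otimes1-1\otimes\tau_2([X,A_i])\bigr)w_i=0$ for all $X$. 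Equivalently, $\sum_i\tau^{\k\cap\q}([X,A_i])w_i=0$, where $\tau^{\k\cap\q}$ denotes the action of $\k_1$ on $W$ through the antidiagonal $\k\cap\q$.

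For the \emph{if} direction, assume $\dim W=1$ and $\tau|_{\k\cap\q}$ trivial. Then $\tau_1(Z)=\tau_2(Z)$ on $\k_1$, both scalar, say $\tau_1=\tau_2=\chi$. A first order element is $(A,B)\otimes w$, and the condition reads $\chi([X,A]+[X,B])=0$ for all $X\in\p_1^+$, i.e.\ $B(A+B,\cdot)$ pairs trivially through $\chi\circ\mathrm{ad}$. Since $\chi$ is nonzero only on $\z_{\k_1}$ and the map $\p_1^-\ni C\mapsto(X\mapsto\chi([X,C]))$ is injective (by the Killing form pairing), we get $A+B=0$. So everything is normal.

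For the \emph{only if} direction, I will produce a non-normal element whenever the hypothesis fails. If $\tau|_{\k\cap\q}$ is nontrivial, or more generally if $\dim W>1$, I mimic the proof of Proposition~\ref{prop:everynorfortriple}. The duality $\dim\mathcal V^{(1)}\cap\mathcal L^c_{W,H}=\dim\mathcal V^{(1)}\cap\mathcal U(\h_0)W$ forces the whole of $\mathcal V^{(1)}\cap\mathcal U(\h_0)W$ to be annihilated by $\p_\h^+$. Hence $\tau^{\k\cap\q}([X,A])=0$ for all $X\in\p_1^+$ and $A\in\p_1^-$. Since $[\p_1^+,\p_1^-]=\k_{1,\C}$ ($\g_1$ simple), $\tau$ is trivial on $\k\cap\q$, i.e.\ $\tau_1\otimes1=1\otimes\tau_2$ on $\k_1$. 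This forces both $\tau_1$ and $\tau_2$ to be scalar, so $\dim W=1$.

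The main obstacle is justifying that the duality theorem \cite[Theorem 4.6]{OV4} applies degree by degree in this non-simple setting, where $\z_\k\not\subset\l$. I would verify it directly here: both $\mathcal V^{(1)}\cap\mathcal L^c_{W,H}$ and $\mathcal V^{(1)}\cap\mathcal U(\h_0)W\cong\p_1^-\otimes W$ can be computed as $L$-modules, and their dimensions compared via the explicit kernel description above.
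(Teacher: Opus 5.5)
Your proof is correct and is essentially the paper's argument for Proposition~\ref{prop:nu1fortensor}. You use the same reduction to degree one, the same formula for $L^\tau_{(X,X)}[(A,B)\otimes w]$, and the same use of $[\p_1^+,\p_1^-]=\k_{1,\C}$. There are three small differences. The paper writes this as $[\p_\h^+,\p_{\h_0}^-]=\q_\C\cap\k_\C$ and passes to the ideal it generates, where you use $\tau_1\otimes 1=1\otimes\tau_2$. For the \emph{if} part you use Killing-form injectivity instead of inclusion plus equidimensionality. Your $\z_\l$-check of homogeneity is a welcome addition.

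The step you flag as the main obstacle is not one. $\dim(\mathcal V^{(1)}\cap\mathcal L_{W,H}^c)=\dim\p_{\h_0}^+\cdot\dim W$ follows from $\mathcal L_{W,H}^c=\mathcal P(\p_{\h_0}^+,W)$ and $\mathcal V^{(1)}$ being the $W$-valued linear polynomials on $\p^+$, exactly as in the proof of Proposition~\ref{sub:7}. This holds verbatim for $G_0\times G_0$, so no general duality theorem is needed.
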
 The proof of the Proposition is the statement and proof in  \ref{prop:nu1fortensor}.

In \ref{sub:w0irred} we have developed a way to construct first order normal derivative symmetry breaking operators when $\g$ is a simple Lie algebra, in \ref{rmk:tensor2}, we verify the technique does not apply to the case of tensor products.

  \section{Graded structure  of   $\mathcal U(\g)\otimes_{\mathcal U(\k_\C +\p_\g ^+)} W, \,\mathcal L_{W,H}^c, \, \mathcal U(\h_0)W$ }\label{sec:algebraic}

In this section we present the algebraic  proofs of some of the results in section on symmetry breaking operators. The hypothesis for the whole section are as before, except for   \ref{sub:tensorpro}, $\g$ is a simple Lie algebra, $(G,H)$ a symmetric pair, $H_0$ the associated subgroup of $G$, as well as,  $\k,\l, \Psi_\g, \Psi_\h$, $\Psi_{\h_0}$, $\Psi_n(\g), \Psi_n(\h), \Psi_n(\h_0)$, $\p_\g, \p_\h, \p_{\h_0}$, $H/L\rightarrow G/K$ is a holomorphic immersion. The underlying Harish-Chandra module of the holomorphic Discrete Series we are dealing with is isomorphic to the Verma module induced from the Harish-Chandra, Siegel parabolic subalgebra by the representation of $\k$ determinate by the lowest $K$-type of the holomorphic representation we are dealing with. Precisely, the Harish-Chandra module is isomorphic to the Verma module   $\mathcal U(\g)\otimes_{\mathcal U(\k_\C +\p_\g ^+)} W$. Here, $(\tau, W)$ is an irreducible representation of $(\k,K)$ extended by zero to $\p_\g ^+$. Henceforth, the equivalence class of $D\otimes w, D \in \mathcal U(\g), w \in W$ in  $\mathcal U(\g)\otimes_{\mathcal U(\k_\C +\p_\g ^+)} W$ is denoted by $[D\otimes w]$. Thus, $[DY\otimes w] =[D\otimes \tau (Y)w], Y \in \k_\C$ and $[DY\otimes w] =0$ for $Y\in \p_\g^+$. The action of $D\in \mathcal U(\g)$ on $\mathcal U(\g)\otimes_{\mathcal U(\k_\C +\p_\g ^+)} W$ is denoted by $L_D^\tau$.

     Due that the Lie algebra $\p_\g^-$  is a abelian Lie algebra, the symmetrization from $S(\p_\g^-)$ onto $\mathcal U(\p_\g^-)$ becomes an associative algebra isomorphism, thus, we may and will think of the isomorphism $S(\p_\g^-)\otimes W$ onto $\mathcal U(\p_\g^-)\otimes W$ as an equality.    We recall   the $K$-isomorphism, as well as $\mathcal U(\p_\g^-)$-map,   from $\mathcal U(\p_\g^-)\otimes W $ to $\mathcal U(\g)\otimes_{\mathcal U(\k +\p^+)} W $ defined by  $D\otimes w \mapsto [D\otimes w]$.
     Then,  by means of the action   of $\p_\g^-$,  the space $\mathcal U(\g)\otimes_{\mathcal U(\k_\C +\p_\g ^+)} W$  inherits    a  graduated vector space structure, where the $n^{th}$ subspace is:
\subsubsection{}\label{sub:vn}
     $\mathcal V^{(n)}:=lin.span_\C \{L_{x_1\cdots x_n}^\tau (w)=[x_1\cdots x_n \otimes w], x_j \in \p_\g^-, w\in W\}$. The isomorphism  $ \mathcal U(\g)\otimes_{\mathcal U(\k +\p^+)} W \cong (V_\tau^G)_{K-fin}$ and the duality determinate by the Killing form between $\p_\g^-$ and $\p_\g^+$ yields:   {\it The subspace  $\mathcal V^{(n)}$ is equal to the subspace of degree $n$ holomorphic homogeneous polynomials in $\p_\g^+$}. For a proof see \cite{JV}.

\subsection{Analysis of the inclusion $\mathcal U(\h_0)W\cap \mathcal V^{(k)}\subset \mathcal L_{W,H}^c $, for $k\geq 1$}
 We recall  our hypothesis of $H/L \rightarrow G/K$ is a holomorphic immersion and we are analyzing a holomorphic    $H$-admissible representation whose underlying Harish-Chandra module is identified with the Verma module $\mathcal U(\g)\otimes_{\mathcal U(\k +\p^+)} W $.
To begin with, we present technical results in order to describe the main facts.
     \subsubsection{The subspace $[[\p_{\h_0}^-, \p_\h^+],\p_{\h_0}^- ]$}\label{sub:triplebrack} Owing to our hypothesis, $\h_\C$ is equal to the fix point set of an involution $\sigma$ that commutes
      with the Cartan involution, it readily follows that $[[\p_{\h_0}^-, \p_\h^+],\p_{\h_0}^- ]\subset \p_\h$.

   \begin{claim}\label{prop:triplein}  Actually,   we have $[[\p_{\h_0}^-, \p_\h^+],\p_{\h_0}^- ]\subset \p_\h^-$.
   \end{claim}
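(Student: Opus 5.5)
We will show the inclusion by grading with respect to the center of $\k$. The key input is that $\z_\k\subset \l$ (recalled in Section~\ref{sec:notation}) and that $\z_\k$ acts on $\p_\g^\pm$ by scalars. Fix $Z_0\in \z_\k$ such that $\operatorname{ad}(Z_0)$ acts by $+1$ on $\p_\g^+$, by $-1$ on $\p_\g^-$, and by $0$ on $\k_\C$. Such an element exists because $G/K$ is Hermitian with $\g$ simple: $\z_\k$ is one dimensional and $\p_\g^\pm$ are the $\pm$ eigenspaces.

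The grading argument runs as follows. Since $\p_\h^+\subset\p_\g^+$ and $\p_{\h_0}^-\subset\p_\g^-$, the bracket $[\p_{\h_0}^-,\p_\h^+]$ lies in the $0$-eigenspace of $\operatorname{ad}(Z_0)$, which is $\k_\C$. By the Jacobi identity, $\operatorname{ad}(Z_0)$ is a derivation, so eigenvalues add under brackets. Consequently $[[\p_{\h_0}^-,\p_\h^+],\p_{\h_0}^-]$ lies in the $(-1)$-eigenspace, namely $\p_\g^-$. Equivalently, $[\k_\C,\p_\g^-]\subset\p_\g^-$ because $\p_\g^-$ is $Ad(K)$-invariant; this already gives the containment in $\p_\g^-$ directly.

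It remains to intersect with $\h$. As observed in \ref{sub:triplebrack}, the triple bracket lies in $\p_\h$ (complexified). The parity argument is this: $\sigma$ acts by $-1$ on $\p_{\h_0}=\q\cap\p_\g$ and by $+1$ on $\p_\h$, so the triple bracket has $\sigma$-eigenvalue $(-1)(+1)(-1)=+1$. It also lies in $\p_\g$, by the $\theta$-parity $(-1)(-1)(-1)=-1$. Hence it lies in $\h_\C\cap\p_{\g,\C}=\p_{\h,\C}$.

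Combining the two steps, the triple bracket lies in $\p_{\h,\C}\cap \p_\g^-$. Because $\p_\g^+=\p_\h^+\oplus\p_{\h_0}^+$, and similarly $\p_\g^-=\p_\h^-\oplus\p_{\h_0}^-$ (the decomposition is $\sigma$-stable since $Z_0\in\l$ is $\sigma$-fixed, so $\sigma$ commutes with $\operatorname{ad}Z_0$), we have $\p_{\h,\C}\cap\p_\g^-=\p_\h^-$. This last identification is the only point needing care. To handle it, I would note that $\p_\h^\pm$ are by definition the $\pm1$ eigenspaces of $\operatorname{ad}Z_0$ on $\p_{\h,\C}$; this is where the holomorphic embedding hypothesis, through $\z_\k\subset\l$, enters. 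With that identification the claim follows.
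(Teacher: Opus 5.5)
Your argument is correct and follows the paper's scheme. The triple bracket lies in $\p_\h$ by $\sigma$-parity and in $\p_\g^-$ by a degree count, hence in $(\p_\h)_\C\cap\p_\g^-=\p_\h^-$. The paper does the degree count with root vectors: the coefficient of the noncompact simple root in $\alpha-\gamma-\epsilon$ is $-1$, which forces it to treat the pairs with $U\neq T$ separately. Your intrinsic version, $[\k_\C,\p_\g^-]\subset\p_\g^-$, covers all cases at once. The final identification also follows directly from $\p_\h^\pm\subset\p_\g^\pm$, without invoking $\z_\k\subset\l$.
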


      In fact,   let $\beta$ denote the noncompact simple root in $\Psi$, whenever $U=T$,  then a set of generators for $[[\p_{\h_0}^-, \p_\h^+],\p_{\h_0}^- ]$ is $\{[[E_{-\gamma}, E_{\alpha}],E_{-\epsilon}]\}$, for convenient noncompact roots  $\alpha, \gamma, \epsilon $ in $\Psi$,    each nonzero    triple bracket     implies   that $\alpha -\gamma -\epsilon $ is a root. The coefficient of $\beta$ in the sum is $-1$. Whence, $\alpha -\gamma -\epsilon $   is a negative root. Hence, a set of generators for $[[\p_{\h_0}^-, \p_\h^+],\p_{\h_0}^- ]$  is a subset of $\p_\h^-$  and we have verified the claim for $U=T$.

When $U\not= T$,    we have to consider $(\g, \h)=(\mathfrak{su}(n,n), \mathfrak{sp}(n,\mathbb R))$, $(\g, \h)=(\mathfrak{so}(2,2(n-1)), \mathfrak{so}(2,2k+1)\oplus \mathfrak{so}(2(n-1)-2k-1))$ and their respective associate pairs.
For both pairs,  the representation of $L$ in $\k/\l $ is absolutely irreducible, hence,  it  follows the equality $[\p_{\h_0}^-, \p_\h^+]=\q_\C \cap \k_\C$. Thus,   $[[\p_{\h_0}^-, \p_\h^+],\p_{\h_0}^- ]\not = 0$, hence, for the involved roots, we have $\alpha, \gamma, \epsilon$ in $\Psi_n $
so that $\alpha -\gamma -\epsilon $ is a root. Owing to the previous observations on $\alpha, \gamma, \epsilon, \beta$, the coefficient of $\beta$ in the sum is $-1$. Now, the verification follows as the case $U=T$.

\subsubsection{} \label{sub:triplepairszero} In \cite[4.6]{Va},  we find the list of symmetric   pairs $(\g,\h)$ that satisfies the condition $[[ \p_{\h_0}^-,\p_\h^+ ],\p_{\h_0}^-]=\overline{[[ \p_{\h_0}^+, \p_\h^- ],
\p_{\h_0}^+]}=\{0\}$  are:
\begin{center} $ (\mathfrak{su}(m,n), \mathfrak{su} (m,l) +\mathfrak{su} ( n-l)+\mathfrak u(1)),$ $(\mathfrak{so}(2m,2), \mathfrak u(m,1)),$ \\ $(\mathfrak{so}^\star (2n), \mathfrak u(1,n-1)),$ $(\mathfrak{so}^\star(2n),  \mathfrak{so}(2) +\mathfrak{so}^\star(2n-2)),$\\
$(\mathfrak e_{6(-14)}, \mathfrak{so}(2,8)+\mathfrak{so}(2)).$
 \end{center}

It is also shown,   $[[ \p_{\h_0}^-,\p_\h^+ ],\p_{\h_0}^-]=\{0\}$ implies $[[ \p_{\h}^-,\p_{\h_0}^+ ],\p_{\h}^-]=\{0\}$.
\subsubsection{Computing $[\p_{\h}^+, \p_{\h_0}^-]$ when $[[\p_{\h}^+, \p_{\h_0}^-], \p_{\h_0}^-]=\{0\} $ }\label{sub:bracketphh0}

For a  symmetric pair $(\g,\h)$, and,  $\h_0, \Psi$, $\Psi_n(\h)$, $\q\cap k,  \p_\h   $ as in Section~\ref{sec:notation},  we  have partially computed the subspace $[\p_{\h}^+, \p_{\h_0}^-]$ in different places, the aim of this paragraph is to complete such a calculation.

{\it We assume the triple bracket is equal to zero, that is,   $[[\p_{\h}^+, \p_{\h_0}^-], \p_{\h_0}^-]=\{0\} $, we claim $[\p_{\h}^+, \p_{\h_0}^-]$ is a nilpotent abelian Lie algebra, more precisely, we verify $[\p_{\h}^+, \p_{\h_0}^-]$ is  equal to one of  $(\q_\C \cap \k_\C)^\pm$}.

According to \ref{sub:triplepairszero}, we are left to consider the  pairs in a), b), c), d):

a) $(\mathfrak{su}(m,n), \h=\mathfrak{su}(m,l)+\mathfrak{su}(n-l)+\mathfrak u(1)) (1\leq l <n)$ dual pair $ (\mathfrak{su}(m,n),\h_0=\mathfrak{su}(m,n-l)+\mathfrak{su}(l) +\mathfrak u(1))$.

The computation has been carried out in \ref{sub:b)} paragraph b3) b4). We have $[\p_{\h}^+, \p_{\h_0}^-]=(\q_\C \cap \k_\C)^-$,  $[\p_{\h_0}^+, \p_{\h}^-]=(\q_\C \cap \k_\C)^+$.

b) $(\mathfrak{so}(2m,2), \mathfrak u(m,1))$. $\Psi_c=\{(e_i \pm e_j):  1 \leq i<j\leq m \}$, $\Psi_n=\{(\delta \pm e_i):  1 \leq i \leq m \}$. $\Psi_n(\h)=\{(\delta -e_i  ):  1 \leq i\leq m \}$, $\Psi_n(\h_0)=\{(\delta +e_i  ):  1 \leq i\leq m \}$, $\Psi_c(\l)=\{(e_i - e_j):  1 \leq i<j\leq m \}$, $\Phi((\q_\C \cap \k_\C)^+)= \{(e_i + e_j):  1 \leq i<j\leq m \}$. Here, $[\p_{\h}^+, \p_{\h_0}^-]=(\q_\C \cap \k_\C)^-$.

c) $(\mathfrak{so^\star}(2m), \mathfrak{so}(2)+ \mathfrak{so^\star}(2m-2))$ dual pair $(\mathfrak{so^\star}(2m),  \mathfrak u(1,m-1))$.
$\Psi_c=\{(e_i - e_j):  1 \leq i<j\leq m \}$, $\Psi_n=\{(e_i + e_j):  1 \leq i<j\leq m  \}$. $\Psi_n(\h)=\{(e_i +e_j  ): 2 \leq i\not= j \leq m \}$, $\Psi_n(\h_0)=\{(e_1 +e_i  ):  2 \leq i\leq m \}$, $\Psi_c(\l)=\{(e_i - e_j):  2 \leq i<j\leq m \}$, $\Phi((\q_\C \cap \k_\C)^+)= \{(e_1 - e_j):  2\leq  j\leq m \}$. Here, $[\p_{\h}^+, \p_{\h_0}^-]=(\q_\C \cap \k_\C)^-$, $[\p_{\h_0}^+, \p_{\h}^-]=(\q_\C \cap \k_\C)^+$.

d) $(\mathfrak e_{6(-14)}, \mathfrak{so}(8,2)+\mathfrak {so}(2)), $
    $\k =\mathfrak {so}(10)    +\mathfrak {so}(2)$, $  \h_0 \equiv \h\equiv \mathfrak {so}(2,8) +\mathfrak {so}(2)$. Next, we pin down the positive roots for $\g, \h, \h_0$.

  The Vogan diagram for a holomorphic system for $\mathfrak e_{6(-14)}$ has fundamental roots (as in Bourbaki) $\alpha_1, \dots, \alpha_5, \alpha_6, $  the noncompact simple root is $\alpha_1$, the root $\alpha_2$ is  adjacent to both $\alpha_4$ and to the opposite of the maximal root. Maximal root is $\alpha_1 +2\alpha_3 +3 \alpha_4+2 \alpha_5+\alpha_6 +2\alpha_2$.

\noindent
 Then $\Phi(\k,\t)= \{\pm \sum_j a_j \alpha_j \in \Phi(\mathfrak e_6,\mathfrak t) :    a_1=0\}$; $\dim G/K= 32$; $\Psi_n(\h ,\t)=\{  \sum_j a_j \alpha_j \in \Phi(\mathfrak e_6,\mathfrak t) :  a_1=1, a_6=0\}=\{\beta_9,\dots, \beta_{16} \}, \\ \Psi_n(\h_0,  \t)=\{  \sum_j a_j \alpha_j \in \Phi(\mathfrak e_6,\mathfrak t) :  a_1=1, a_6=1\}=\{\beta_1, \dots, \beta_{16} \}$ and $\Phi(\l, \t)=\{ \pm \sum_j a_j \alpha_j \in \Phi(\mathfrak e_6,\mathfrak t) :  a_1=a_6=0\}.$ $\q_\C \cap \k_\C \equiv \k/\l \equiv \mathfrak {so}(10)/ (\mathfrak{so}(2)+ \mathfrak {so}(8))$ is the direct  sum of the abelian algebras $(\q_\C \cap \k_\C)^\pm =\sum_{\{ \alpha =\sum_j a_j \alpha_j  : a_1=0, a_6=\pm 1\}} (\e_6)_\alpha$, $\dim (\q_\C \cap \k_\C)^\pm =8 $. A direct computation, based on $-\beta_j +\beta_s =-\alpha_6 -\dots$ for  $\beta_s \in \Psi_n(\h)$, $\beta_j \in \Psi_n(\h_0)$   yields $[\p_\h^+ , \p_{\h_0}^-] = (\q_\C \cap \k_\C)^-$ and, via conjugation,  $[\p_{\h_0}^+ , \p_{\h}^-] = (\q_\C \cap \k_\C)^+$. It readily follows that
$[[\p_\h^+ , \p_{\h_0}^-], \p_{\h_0}^-]=\{0\}$.

\subsubsection{Resume: the values of $[\p_{\h_0}^-, \p_\h^+]$}  For $\g$ a simple Lie algebra, and  the pairs so that the triple bracket $[[\p_\h^+ , \p_{\h_0}^-], \p_{\h_0}^-]$ is zero,  the subspace $[\p_{\h_0}^-, \p_\h^+]$ is one of the proper  subspace of $(\q_\C \cap \k_\C)^\pm$.  For any other pair $(\g,\h)$  we obtain $[\p_{\h_0}^-, \p_\h^+]=\q_\C \cap \k_\C$.
  For $ (\g, \h) \cong  (\mathfrak{su}(m,n), \mathfrak{su}(k,n)\oplus \mathfrak{su}(m-k)\oplus \mathfrak u(1)) $  (details in \ref{sub:7}) and   $1\leq k < m $, then $[\p_{\h_0}^-, \p_\h^+]$ is equal to   $(\q_\C \cap \k_\C)^+$.

\smallskip

To follow, we show a fact that will have consequences on the structure of the symmetry breaking operators.
\begin{prop}\label{prop:orderkisnormal}We assume there exist $k\geq 2$ so that $\mathcal U(\h_0)W \cap \mathcal V^{(k)}
   \subseteq \mathcal L_{W,H}^c$. Then,   $[\p_\h^+, \p_{\h_0}^-] \subseteq Ker(\tau)$ and
$[[\p_{\h_0}^-, \p_\h^+],\p_{\h_0}^- ]=\{0\}$.  When, $\mathcal U(\h_0)W \cap \mathcal V^{(1)}
   \subseteq \mathcal L_{W,H}^c$. Then,   $[\p_\h^+, \p_{\h_0}^-] \subseteq Ker(\tau)$.
\end{prop}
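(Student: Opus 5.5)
The plan is to test the inclusion directly on explicit elements of $\mathcal U(\h_0)W\cap\mathcal V^{(k)}$, using the characterization (\ref{eq:Lwh}) that $\mathcal L_{W,H}^c$ consists of the vectors killed by $L_X^\tau$ for every $X\in\p_\h^+$. In the Verma module picture $\mathcal U(\g)\otimes_{\mathcal U(\k_\C+\p_\g^+)}W$, take $Y_1,\dots,Y_k\in\p_{\h_0}^-$ and $w\in W$. Since $[Y_1\cdots Y_k\otimes w]$ lies in $\mathcal U(\h_0)W\cap\mathcal V^{(k)}$, the hypothesis gives
\[
L_X^\tau[Y_1\cdots Y_k\otimes w]=0 \quad\text{for all } X\in\p_\h^+ .
\]
We compute the left side by commuting $X$ to the right, where it annihilates $w$. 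Because $\p_\g^-$ is abelian, we get
\[
XY_1\cdots Y_k=\sum_j Y_1\cdots\widehat{Y_j}\cdots Y_k\,[X,Y_j]+\sum_{i<j}Y_1\cdots\widehat{Y_i}\cdots\widehat{Y_j}\cdots Y_k\,[[X,Y_i],Y_j]+Y_1\cdots Y_kX ,
\]
because $[[X,Y_i],Y_j]\in\p_\g^-$ commutes with the remaining $Y$'s and the triple commutators vanish in degree. Now $[X,Y_j]\in\k_\C$ acts on $w$ through $\tau$, and $[[X,Y_i],Y_j]$ lies in $\p_\h^-$ by Claim~\ref{prop:triplein}. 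Hence
\[
0=\sum_j[Y_1\cdots\widehat{Y_j}\cdots Y_k\otimes\tau([X,Y_j])w]+\sum_{i<j}[Y_1\cdots\widehat{Y_i}\cdots\widehat{Y_j}\cdots Y_k\,[[X,Y_i],Y_j]\otimes w].
\]
The first sum lies in $\mathcal V^{(k-1)}$ and the second in $\mathcal V^{(k-1)}$ as well, so both terms live in $S^{k-1}(\p_\g^-)\otimes W$. We separate them using the decomposition $\p_\g^-=\p_\h^-\oplus\p_{\h_0}^-$. Since $S(\p_\g^-)\otimes W\to$ Verma module is a linear isomorphism, we have $S^{k-1}(\p_\g^-)=\bigoplus_a S^{a}(\p_\h^-)S^{k-1-a}(\p_{\h_0}^-)$. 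The first sum lies in the $a=0$ piece $S^{k-1}(\p_{\h_0}^-)\otimes W$, while the second lies in the $a=1$ piece. Hence each sum vanishes separately; this is where $k\ge2$ matters for the second conclusion.

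For the first conclusion, specialize to $Y_1=\dots=Y_k=Y$. Then the first sum becomes $k\,Y^{k-1}\otimes\tau([X,Y])w=0$. Since $S(\p_{\h_0}^-)\otimes W$ injects and $Y^{k-1}\neq0$ in the polynomial algebra, we get $\tau([X,Y])=0$ for all $X\in\p_\h^+$, $Y\in\p_{\h_0}^-$. So $[\p_\h^+,\p_{\h_0}^-]\subseteq Ker(\tau)$; a polarization is not even needed. The case $k=1$ is the same computation with no second sum: $[X,Y\otimes w]=[1\otimes\tau([X,Y])w]=0$ directly. This proves the last sentence of the statement.

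For the triple bracket, with $k\ge2$ the second sum vanishes. Taking $Y_1=\dots=Y_k=Y$ gives $\binom{k}{2}Y^{k-2}[[X,Y],Y]\otimes w=0$ in $S^{k-2}(\p_{\h_0}^-)\p_\h^-\otimes W$, hence $[[X,Y],Y]=0$ for all $Y\in\p_{\h_0}^-$ and $X\in\p_\h^+$. The map $(Y,Y')\mapsto[[X,Y],Y']$ is symmetric, by the Jacobi identity and the fact that $\p_\g^-$ is abelian: indeed $[[X,Y],Y']-[[X,Y'],Y]=[X,[Y,Y']]=0$. Therefore polarization gives $[[\p_\h^+,\p_{\h_0}^-],\p_{\h_0}^-]=\{0\}$.

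The main obstacle is the separation step. One has to be careful that the two sums land in linearly independent graded pieces. This relies on Claim~\ref{prop:triplein}, which places the triple bracket in $\p_\h^-$ rather than in all of $\p_\g^-$, together with the PBW/symmetric-algebra identification of the Verma module with $S(\p_\h^-)\otimes S(\p_{\h_0}^-)\otimes W$.
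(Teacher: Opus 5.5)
Your proof is correct and follows the paper's argument. You use the same commutation formula (\ref{eq:formulapiX}), and you rely on Claim~\ref{prop:triplein} to separate the $S^{k-1}(\p_{\h_0}^-)\otimes W$ and $\p_\h^- S^{k-2}(\p_{\h_0}^-)\otimes W$ components inside $S(\p_\g^-)\otimes W$. The one difference is that you specialize to $Y_1=\dots=Y_k=Y$ and then polarize using $[[X,Y],Y']=[[X,Y'],Y]$, which is tidier than the paper's separate treatment of linearly independent and repeated $Y_j$'s and covers every $k\geq 2$ uniformly, including the case $\dim\p_{\h_0}^-<k$.
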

  The case $k=1$ is analyzed in detail in  Proposition~\ref{sub:7}. The converse statement for $k=1$ is analyzed bellow.

\begin{proof}The case $k=2 $ is analyzed in \ref{prop:equal}. To show the underlying computation we carry out in detail the case $k=3$.

We fix
$ X \in \p_\h^+, Y_j \in \p_{\h_0}^-    $ and  we compute

\begin{equation*} \begin{split} L_X^\tau(Y_1Y_2Y_3\otimes w)& =[XY_1Y_2Y_3 \otimes w] \\ & = [Y_1XY_2Y_3 + [X,Y_1]Y_2Y_3 \otimes w ]
\\ & = [ Y_1Y_2XY_3\otimes w + Y_1[X,Y_2]Y_3\otimes w \\ & \quad \quad +Y_2[X,Y_1]Y_3\otimes w
+[[X,Y_1],Y_2]]Y_3 \otimes w ]
 \\ & = [ Y_1Y_2Y_3X\otimes w + Y_1Y_2[X,Y_3]\otimes w   \\ & \quad +Y_1Y_3[X,Y_2]\otimes w+Y_1 [[X,Y_2],Y_3]] \otimes w ]
 \\ & \quad \quad +Y_2Y_3[X,Y_1]\otimes w+Y_2 [[X,Y_1],Y_3]] \otimes w ]
 \\ & \quad \quad \quad + Y_3 [[X,Y_1],Y_2]] \otimes w + [[[X,Y_1],Y_2],Y_3 ]\otimes w
 \\ & = [ Y_1Y_2Y_3\otimes \tau(X) w + Y_1Y_2 \otimes \tau([X,Y_3]) w   \\ & \quad +Y_1Y_3\otimes \tau([X,Y_2]) w+Y_1 [[X,Y_2],Y_3]] \otimes w ]
 \\ & \quad \quad +Y_2Y_3 \otimes \tau([X,Y_1]) w+Y_2 [[X,Y_1],Y_3]] \otimes w ]
 \\ & \quad \quad \quad + Y_3 [[X,Y_1],Y_2]] \otimes w + [[[X,Y_1],Y_2],Y_3 ]\otimes w]
 \\ & = [ Y_1Y_2Y_3\otimes \tau(X) w + Y_1Y_2 \otimes \tau([X,Y_3]) w   \\ & \quad +Y_1Y_3\otimes \tau([X,Y_2]) w+ Y_2Y_3 \otimes \tau([X,Y_1])  w ]
 \\ & \quad \quad +Y_1 [[X,Y_2],Y_3]]  \otimes  w+Y_2 [[X,Y_1],Y_3]] \otimes w ]
 \\ & \quad \quad \quad + Y_3 [[X,Y_1],Y_2]] \otimes w + [[[X,Y_1],Y_2],Y_3 ]\otimes w].
 \end{split}
\end{equation*}
Now,  $X \in \p_\h^+ \Rightarrow \tau(X)w=0$, owing to \ref{prop:triplein} we have $[[X,Y_1],Y_2] \in \p_\h^- \subset \p_\g^-$, by hypothesis $Y_3 \in  \p_{\h_0}^- \subset \p_\g^-$ and $\p_\g^-$ is abelian, thus, the last summand is zero.
We are left with
\begin{equation} \label{eq:triple}\begin{split} L_X^\tau[Y_1Y_2Y_3\otimes w]& =
[ Y_1Y_2 \otimes \tau([X,Y_3]) w   \\ & \quad +Y_1Y_3\otimes \tau([X,Y_2]) w+ Y_2Y_3 \otimes \tau([X,Y_1])   w
 \\ & \quad \quad +Y_1 [[X,Y_2],Y_3]]  \otimes  w+Y_2 [[X,Y_1],Y_3]] \otimes w
 \\ & \quad \quad \quad + Y_3 [[X,Y_1],Y_2]] \otimes w ].
 \end{split}
\end{equation}
We choose $Y_j$ to be a linearly independent set, and  we fix a linear basis $Z_k$ for $\p_\h^-$, then $[[X,Y_i],Y_j]]=\sum_k c_{ij}^k  Z_k $. Since, $\p_{\h_0}^-$ is a complementary
space to $\p_\h^-$, the vectors $Y_iY_j, Y_i Z_k$ are linearly independent in $S(\p_\g)$ and the above sum is a sum in $S(\p_\g^-)\otimes W$, we obtain that $L_X^\tau[Y_1Y_2Y_3\otimes w]=0$ implies $c_{ij}^k$ vanishes for all $i,j,k$ and $\tau([X,Y_j])=0$ for all $X,Y_j$. That is, for $X \in \p_\h^+$, $Y_j \in \p_{\h_0}^-$, $L_X^\tau[Y_1Y_2Y_3\otimes w] =0$ implies $[[X,Y_i],Y_j]]=0$ and $[X,Y_j] \in Ker(\tau)$. Therefore, $[\p_\h^+, \p_{\h_0}^-]\subseteq Ker(\tau)$ and $[[\p_\h^+, \p_{\h_0}^-], \p_{\h_0}^-]=\{0\}$.

 To follow,   we analyze the case $Y_1=Y_2 \not= Y_3$ and linearly independent, then,  the equality becomes
\begin{equation*} \begin{split} L_X^\tau[Y_1^2Y_3\otimes w]& =
[ Y_1^2 \otimes \tau([X,Y_3]) w     +2Y_1Y_3\otimes \tau([X,Y_2]) w
 \\ & \quad \quad +2Y_1 [[X,Y_1],Y_3]]  \otimes  w+ Y_3 [[X,Y_1],Y_1]] \otimes w ].
 \end{split}
\end{equation*}
For the case $Y_1=Y_2=Y_3$ the equality is
\begin{equation*} \begin{split} L_X^\tau[Y_1^3\otimes w]& =
[ 3Y_1^2 \otimes \tau([X,Y_1]) w      +3Y_1 [[X,Y_1],Y_1]]  \otimes  w  ].
 \end{split}
\end{equation*}
After we argue as in case the vectors $Y_j$ are linearly independent,  we obtain a proof of  the Proposition for $k=3$.

In general, the formula for any $r\geq 2$, $X \in \p_\h^+, Y_j \in \p_{\h_0}^-    $ is
\begin{equation*} \begin{split} L_X^\tau[Y_1\cdots Y_r\otimes w]& =\sum_{1\leq j \leq r} [ Y_1 \cdots Y_{j-1} Y_{j+1}\cdots Y_r \otimes \tau([X,Y_j])(w)
 \\ &  +\sum_{1 \leq i<j \leq r} Y_1 \cdots Y_{i-1} Y_{i+1}\cdots \\ & \quad \quad \quad \cdots Y_{j-1}Y_{j+1}\cdots Y_r [[X,Y_i],Y_j]]   \otimes  w  ].
 \end{split}
\end{equation*}

The formula for any $a_j \geq 2  $, $X \in \p_\h^+$, $ Y_j \in \p_{\h_0}^- , j=1\cdots d=\dim_\C \p_{\h_0}^-   $,
 \begin{multline} \label{eq:formulapiX}
 L_X^\tau[Y_1^{a_1}\cdots Y_d^{a_d}\otimes w] \\
                    =\sum_{1\leq j \leq d} a_j [   Y_1^{a_1} \cdots Y_{j-1}^{a_{j-1}}Y_j^{a_j -1} Y_{j +1}^{a_{j+1}}\cdots Y_d^{a_d} \otimes \tau([X,Y_j])w \\
+\sum_{1\leq t \leq d} \binom{a_t}{2}  Y_1^{a_1} \cdots Y_{t-1}^{a_{t-1}}Y_t^{a_t -2} Y_{t+1}^{a_{t+1}}\cdots Y_d^{a_d}[[X,Y_t],Y_t]]  \otimes w \\
+\sum_{1\leq t \leq d} \sum_{s:s=t+1\cdots d} a_t a_{s}
 Y_1^{a_1} \cdots Y_{t-1}^{a_{t-1}} \\  \,\,\,\,\, \times
Y_t^{a_t -1}  Y_{t+1}^{a_{t +1}}\cdots Y_{s-1}^{a_{s-1}}  Y_{s}^{a_s -1}\cdots Y_d^{a_d} [[X,Y_{t}],Y_s]]   \otimes  w  ].
\end{multline}
\begin{multline} \label{eq:formulapiX2} a\geq 2, \,\,
 L_X^\tau[Y^{a} \otimes w]
                    =   [  a  Y^{a-1}  \otimes \tau([X,Y])w \\
+\binom{a}{2}  Y^{a -2} [[X,Y],Y]  \otimes w].
\end{multline}
\begin{equation*} \begin{split} L_X^\tau(Y_1Y_2\otimes w)& =[XY_1Y_2 \otimes w] \\ & = [Y_1XY_2 + [X,Y_1]Y_2 \otimes w ]
\\ & = [ Y_1Y_2X\otimes w + Y_1[X,Y_2]\otimes w \\ & \quad \quad +Y_2[X,Y_1]\otimes w
+[[X,Y_1],Y_2] \otimes w ]
\\ & = [Y_1Y_2\otimes \tau(X)w + Y_1 \otimes \tau([X,Y_2])w \\ & \quad \quad + Y_2\otimes  \tau([X,Y_1])w +
[[X,Y_1],Y_2] \otimes w]=\\ & = [ Y_1 \otimes \tau([X,Y_2])w \\ & \quad \quad + Y_2\otimes  \tau([X,Y_1])w +
[[X,Y_1],Y_2] \otimes w]. \end{split}
\end{equation*}
For the case $k=1$, for $X\in \p_\h^+, Y \in \p_{\h_0}^-, w \in W$,   after we recall $\tau(\p^+)W=\{0\}$, we have,
\begin{equation*} \begin{split} L_X^\tau[Y\otimes w]& = [ YX \otimes w]+[[X,Y] \otimes w]
 \\ &   = [ Y \otimes \tau(X)w  + 1    \otimes \tau([X,Y]) w  ]
 \\ &   = [   1    \otimes \tau([X,Y]) w  ]
 \end{split}
\end{equation*}
Thus, the inclusion    $\mathcal U(\h_0)W \cap \mathcal V^{(1)}
   \subseteq  \mathcal L_{W,H}^c$ forces   $[\p_\h^+, \p_{\h_0}^-] \subseteq Ker(\tau)$.
\end{proof}
 To follow, we show a sufficient condition that assures the existence of nonzero normal differential operators for   any order $n$.
\begin{prop}\label{prop:exisnormal} We assume $\g$ is a símple Lie algebra,  the equality $[[\p_{\h_0}^-, \p_\h^+],\p_{\h_0}^- ]=\{0\}$ and  $(\tau,W)$ is  a non scalar irreducible representation, then, for every $n\geq 1$, $\mathcal V^{(n)}\cap \mathcal L_{W,H}^c \cap \mathcal U(\h_0)W \not=\{0\}$.  \end{prop}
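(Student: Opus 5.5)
The plan is to exhibit, for each $n\geq 1$, an explicit nonzero vector of the form $[Y^n\otimes w_0]$ with $Y\in\p_{\h_0}^-$, and to check directly that it is annihilated by $L_X^\tau$ for every $X\in\p_\h^+$. Such a vector lies in $\mathcal V^{(n)}$ and in $\mathcal U(\h_0)W$ by construction. By \ref{eq:Lwh}, being annihilated by $\p_\h^+$ is exactly membership in $\mathcal L_{W,H}^c$.

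First I would use the general formula for $L_X^\tau[Y_1\cdots Y_r\otimes w]$ established in the proof of Proposition~\ref{prop:orderkisnormal}, together with its case $r=1$, $L_X^\tau[Y\otimes w]=[1\otimes\tau([X,Y])w]$. This formula expresses $L_X^\tau[Y_1\cdots Y_r\otimes w]$ as a sum of two kinds of terms:
\begin{itemize}
\item terms $Y_1\cdots\widehat{Y_j}\cdots Y_r\otimes\tau([X,Y_j])w$;
\item terms containing a factor $[[X,Y_i],Y_j]$.
\end{itemize}
The hypothesis $[[\p_{\h_0}^-,\p_\h^+],\p_{\h_0}^-]=\{0\}$ kills every term of the second kind. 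Hence
\[
L_X^\tau[Y_1\cdots Y_r\otimes w]=\sum_{j}[Y_1\cdots\widehat{Y_j}\cdots Y_r\otimes\tau([X,Y_j])w],
\]
and it suffices to find a nonzero $w_0\in W$ with $\tau([\p_\h^+,\p_{\h_0}^-])w_0=0$.

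Second, I would identify $\mathfrak n:=[\p_\h^+,\p_{\h_0}^-]$. By \ref{sub:bracketphh0}, under the vanishing of the triple bracket, $\mathfrak n$ equals one of $(\q_\C\cap\k_\C)^\pm$. By the discussion in Section~\ref{sec:notation}, since $\z_\k\subset\l$, this is an abelian subalgebra of $\k_{ss}$ spanned by root vectors (weight vectors for $\u$) of a fixed sign. Its elements are therefore nilpotent elements of the semisimple algebra $\k_{ss}$, and so act by nilpotent endomorphisms in the finite dimensional representation $\tau$. Engel's theorem applied to the Lie algebra $\tau(\mathfrak n)\subset \mathrm{End}(W)$ yields a nonzero $w_0\in W$ with $\tau(\mathfrak n)w_0=0$. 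The step needing the most care is this nilpotency claim when $U\neq T$. There I would argue via the $\u$-weight grading: $\mathfrak n$ shifts the eigenvalues of a suitable element of $\u$ strictly in one direction, hence acts nilpotently on any finite dimensional module.

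Third, fix any nonzero $Y\in\p_{\h_0}^-$ and put $v_n:=[Y^n\otimes w_0]=L^\tau_{Y^n}w_0$. By the above, $L_X^\tau v_n=n[Y^{n-1}\otimes\tau([X,Y])w_0]=0$ for all $X\in\p_\h^+$, so $v_n\in\mathcal L_{W,H}^c$. Also $v_n\in\mathcal V^{(n)}\cap\mathcal U(\h_0)W$. Finally, $v_n\neq0$ because the map $S(\p_\g^-)\otimes W\to\mathcal U(\g)\otimes_{\mathcal U(\k_\C+\p_\g^+)}W$ is an isomorphism (\ref{eq:1}, \ref{sub:D1iso}) and $Y^n\otimes w_0\neq 0$ in $S(\p_\g^-)\otimes W$.

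The nonscalar hypothesis is not really used in this argument; it only ensures the statement is not already covered by Proposition~\ref{prop:allsbarenormal}. If one wants an element not lying in the trivially obtained part, one can instead take $w_0$ to be a lowest weight vector of $\tau$, in line with the remark after Proposition~\ref{prop:existencenormal2}.
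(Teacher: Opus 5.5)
Your proposal is correct and is essentially the paper's own argument. The triple-bracket hypothesis kills the double-bracket terms in formula \ref{eq:formulapiX}, Engel's theorem applied to $\tau([\p_\h^+,\p_{\h_0}^-])=\tau((\q_\C\cap\k_\C)^-)$ gives a nonzero common null vector, and the resulting elements lie in $\mathcal V^{(n)}\cap\mathcal L_{W,H}^c\cap\mathcal U(\h_0)W$ (the paper takes all of $S^n(\p_{\h_0}^-)\otimes W_0$, you take $[Y^n\otimes w_0]$); your remark that the nonscalar hypothesis is not needed for the nonvanishing, since for scalar $\tau$ one simply gets $W_0=W$, is also accurate.
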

We carry out a analysis of $\mathcal L_{W,H}^c \cap \mathcal U(\h_0)W $ for the general case in \ref{prop:nu1nonzero}, \ref{sub:7}. We note  for $n=1$, it may happens $\mathcal V^{(1)}\cap \mathcal L_{W,H}^c \cap \mathcal U(\h_0)W =\{0\}$    (see
Table B), For any of these examples we have    $[[\p_{\h_0}^-, \p_\h^+],\p_{\h_0}^- ]\not=\{0\} $.\\  We recall for $G=SL_2(\R)\times SL_2(\R), H=SL_2(\R)$, $\mathcal L_{W,H}^c \cap \mathcal U(\h_0)W =W+\mathcal V^{(1)}\cap \mathcal L_{W,H}^c \cap \mathcal U(\h_0)W $.

\begin{proof}
We fix a linear basis $\{Y_j\}$ for $\p_{\h_0}^-$, the hypothesis forces that in  the formula \ref{eq:formulapiX} for $L_X^\tau([Y_1^{a_1}\cdots Y_d^{a_d}\otimes w])$ the second and third summands vanishes. We apply Engel's Theorem. For this we notice, that due to hypothesis $\tau$ is not a scalar representation, and that \ref{sub:bracketphh0} shows $[\p_{\h_0}^-, \p_\h^+]=(\q_\C \cap \k_\C)^-$ is a nilpotent Lie algebra, we have the family of operators $\tau([X,Y]), X\in \p_\h^+,  Y \in \p_{\h_0}^-$ in $W$ is nilpotent. Hence,  the  linear subspace
$W_0:=\{w \in W: \tau([X,Y_j])w=0, \forall X\in \p_\h^+, \forall Y \in \p_{\h_0}^-\}$ is nonzero. Therefore, for $w\in W_0$, \ref{eq:formulapiX} implies   $L_X^\tau([Y_1^{a_1}\cdots Y_d^{a_d}\otimes w])$ is equal zero and hence $[Y_1^{a_1}\cdots Y_d^{a_d}\otimes w] \in \mathcal V^{(n)}\cap \mathcal L_{W,H}^c \cap \mathcal U(\h_0)W$ for all $a_1,\cdots, a_d$ such that $\sum_i a_i=n$. That is, $S^n(\p_{\h_0}^- )\otimes W_0 $ is contained in $\mathcal V^{(n)}\cap \mathcal L_{W,H}^c \cap \mathcal U(\h_0)W$. \end{proof}

\subsubsection{Note}\label{sub:w0irred} a) Under the hypothesis in Proposition~\ref{prop:exisnormal}, $W_0$ is an irreducible representation of $L$. Indeed, we are dealing with holomorphic pairs $(K,L)$, actually $\p_\k^+= \p_\l^+ +(\q_\C \cap \k_\C)^+$,  hence, we would like to point out that {\it a $L$-lowest weight vector in $W_0$, is by the definition of $W_0$, a  $\k$-lowest weight   of $W$}, thus equal to a scalar multiple of the $\k$-lowest  vector of $W$. Thus, $W_0$ is $L$-irreducible and its lowest weight is equal to the lowest  weight of $W$.

b) Our
hypothesis is: triple bracket equal to zero and $\tau$ a nonscalar representation.

We fix a non zero $Y\in \p_{\h_0}^-$, let $W_Y :=\{ w \in W : \tau[X,Y](w)=0 \, \forall X \in \p_{\h}^+ \}$. Owing to our hypothesis the subspace, $[\p_{\h_0}^-, \p_\h^+]= (\q_\C \cap \k_\C)^-$ is an abelian nilpotent Lie algebra, hence, Engels Theorem yields $W_Y$ is a nonzero linear subspace.
Next, we consider $D=[Y\cdots Y \otimes w]$, (we multiply $Y$ $n$-times),  $w \in W_Y $. Then, for $X\in \p_{\h}^+$
  we apply \ref{eq:formulapiX2} and obtain $L_X^\tau(D)=0$. Thus, $0\not= D \in \mathcal V^{(n)}\cap \mathcal L_{W,H}^c \cap \mathcal U(\h_0)W_Y $. More generally, let $Y_1, \cdots, Y_n \in \p_{\h_0}^-$. We define for $ w \in W_{Y_1}\cap \cdots \cap W_{Y_n}$,  $D:=[Y_1 \cdots Y_n \otimes w]$. Then, formula \ref{eq:formulapiX} yields $D\in \mathcal V^{(n)}\cap \mathcal L_{W,H}^c \cap \mathcal U(\h_0)W$.

We would like to show: We fix $Y_j$    linear basis for $\p_{\h_0 }^-$. If $ \sum_j [Y_j \otimes w_j]\in \mathcal V^{(1)}\cap \mathcal L_{W,H}^c \cap \mathcal U(\h_0)W$, that is,   $L_X^\tau(\sum_j [Y_j \otimes w_j]) =0 \forall X \in \p_\h^+$, then $\forall\,j. \, w_j \in W_{Y_j}$, equivalently, for each $j$,  $\tau([X,Y_j])(w_j)=0$  $  \forall X \in \p_\h^+$. This, would imply the linear span of $\cup_j W_{Y_j}$ is $\mathcal V^{(1)}\cap \mathcal L_{W,H}^c \cap \mathcal U(\h_0)W$.

\subsection{ Some pairs $(\g,\h)$ so that   for every    not one dimensional representation $(\tau ,W)$,  the intersection   $\mathcal V^{(1)}\cap \mathcal U(\h_0)W \cap     \mathcal L_{W,H}^c$ is  a proper subspace of  $\mathcal V^{(1)}\cap\mathcal L_{W,H}^c$}
\phantom{xxxxxxxxxxxxxxxxxxxxxxxxxxxxxxxxxxxxxxxxxxxxxxxxxxxxxxxxxxxx}

 Whence, for these pairs we find non trivial "normal derivatives"  as well as non trivial "no normal derivatives" symmetry breaking operators. for details Section~\ref{do}.

 The pairs we consider are: $\g=\mathfrak{su}(n,1),   \h =\mathfrak s(\mathfrak u( 1 )\oplus \mathfrak{u}(n-1,1))$;

\noindent
$\g=\mathfrak{su}(n,1),   \h =\mathfrak s(\mathfrak u( n-1)\oplus \mathfrak{u}( 1,1))$.

\smallskip

 For any of the two pairs we have $[[\p_\h^+, \p_{\h_0}^-], \p_\h^+]=\{0 \} $, thus, for any {\it one dimensional representation} $(\tau,W)$ we have $\mathcal U(\h_0)W =  \mathcal L_{W,H}^c$, in consequence, for such a $\tau$,  every symmetry breaking operator is represented by normal derivative's differential operator. However, for non scalar representation $\tau$ the results are quite different.

\smallskip

Next, we analyze  a general $\tau$.

\smallskip
\begin{examp}\label{exa:example1} We  begin considering $\g=\mathfrak{su}(n,1),   \h =\mathfrak s(\mathfrak u( 1)\oplus \mathfrak{u}(n-1,1))$,   $\h_0 =\mathfrak{s}( \mathfrak{u}(1,1) \oplus \mathfrak u(n-1))$,
$\k=\mathfrak s(\mathfrak u(1)+\mathfrak u(n))$,
 $\p_\g^+=\sum_{1\leq j \leq n}\C E_{\epsilon_j -\delta}$, $\,\,\, \p_{\h_0}^+= \C E_{\epsilon_1 -\delta}$,  $\p_\h^+=\sum_{2\leq j\leq n}   \C E_{\epsilon_j -\delta}$, $[\  \p_{\h}^+,p_{\h_0}^-]= \sum_{2\leq j \leq n}\C E_{-( \epsilon_1 -\epsilon_j) }$.

 $\,\, \mathcal U(\h_0)W= \{\sum_{0\leq k \leq N} a_k (L_{E_{-(\epsilon_1 -\delta)}}^\tau)^k(w) , w \in W, a_k \in \C, \, N\geq 0\}$

 $=\{ \sum_{0\leq k \leq N} a_k [(E_{-(\epsilon_1 -\delta)}^k \otimes w] , w \in W, a_k \in \C, \, N\geq 0\}$. \\ Thus, $\,\, \mathcal U(\h_0)W \cap \mathcal V^{(1)}= \{ [E_{-(\epsilon_1 -\delta)} \otimes w] , w \in W \}$. \\ We recall $\mathcal L_{W,H}^c=\{ D \in \mathcal U(\g) \otimes_{\mathcal U(\k_\C +\p_\g^+)} W: L_X^\tau (D)=0 \,\, \forall X \in \p_\h^+ \}$. Now,  for $j\geq 2$, we have  $ L_{E_{\epsilon_j -\delta}}^\tau [E_{-(\epsilon_1 -\delta)} \otimes w] =\tau([E_{\epsilon _j -\delta}, E_{-(\epsilon_1 -\delta)}](w)=\tau(E_{\epsilon_j -\epsilon_1})(w) $, and since $E_{ \epsilon_j -\delta}, j\geq 2$ is a linear basis for $\p_\h^+$, we have

 $\mathcal U(\h_0)W\cap \mathcal L_{W,H}^c\cap \mathcal V^{(1)} \\ \phantom{xxxxxxxxxxxxxxx}=\{ [E_{\epsilon_1-\delta} \otimes w]  : w\in \cap_{2\leq i\leq n} Ker(\tau(E_{\epsilon_i -\epsilon_1}))\}$.

To follow,  we show $\mathcal U(\h_0)W\cap \mathcal L_{W,H}^c\cap \mathcal V^{(1)}$  is an irreducible representation of $L$. The proof follows the proof of \ref{sub:w0irred}.

\begin{lem} \label{prop:engel} We assume
   $\tau$ is not one dimensional representation of $\k\equiv \mathfrak{u}(n)$. Then, the subspace $ \cap_{2\leq i\leq n} Ker(\tau(E_{\epsilon_i -\epsilon_1})) $ is a proper subspace of $W$. Moreover,    $ \cap_{2\leq i\leq n} Ker(\tau(E_{\epsilon_i -\epsilon_1})) $ is the $L$-irreducible representation of lowest weight vector equal to the $\k$-lowest weight vector for $W$.
\end{lem}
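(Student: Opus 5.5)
The plan is to treat the family $\{\tau(E_{\epsilon_i-\epsilon_1})\}_{2\leq i\leq n}$ exactly as in \ref{sub:w0irred}, using the root data of Example~\ref{exa:example1}. Here $\k_{ss}\cong \mathfrak{su}(n)$ is simple, and $\l$ has compact roots $\pm(\epsilon_i-\epsilon_j)$ with $i,j\geq 2$. The complement $\q_\C\cap\k_\C$ is spanned by the root vectors for $\pm(\epsilon_1-\epsilon_j)$, $j\geq 2$, and by Example~\ref{exa:example1} the relevant half is $[\p_\h^+,\p_{\h_0}^-]=\sum_{j\geq 2}\C E_{-(\epsilon_1-\epsilon_j)}=(\q_\C\cap\k_\C)^-$. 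Two structural facts will be used throughout. First, this space is an abelian subalgebra, since $(\epsilon_i-\epsilon_1)+(\epsilon_j-\epsilon_1)$ is never a root. Second, it is $Ad(L)$-stable, since it is the image of the bracket of two $L$-stable spaces. Consequently $W_0:=\cap_{2\leq i\leq n}Ker(\tau(E_{\epsilon_i-\epsilon_1}))$ is an $L$-submodule of $W$.

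\emph{Step 1 ($W_0\neq\{0\}$ and it contains the $\k$-lowest weight vector).} I will fix the positive compact system $\Psi_c$ for which every $\epsilon_1-\epsilon_j$ ($j\geq 2$) is positive. This is the ordering under which the elements $E_{\epsilon_i-\epsilon_1}$ are lowering operators, and I expect it to be compatible with the holomorphic system $\Psi$ used in \ref{sub:bracketphh0}. With this choice the negative compact root vectors split into two groups: the root vectors of $\l$ for negative roots of $\l$, and the vectors $E_{\epsilon_i-\epsilon_1}$. Let $v$ be the $\k$-lowest weight vector of $W$. Then $v$ is annihilated by every $E_{\epsilon_i-\epsilon_1}$, so $v\in W_0$. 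Alternatively, Engel's theorem applied to the nilpotent commuting family $\tau((\q_\C\cap\k_\C)^-)$ gives $W_0\neq 0$ directly.

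\emph{Step 2 (irreducibility).} The module $W_0$ is a finite dimensional $L$-module, so it is completely reducible, and each irreducible summand contains an $L$-lowest weight vector $u$. Such a $u$ is killed by the negative root vectors of $\l$, and, because $u\in W_0$, it is also killed by all $E_{\epsilon_i-\epsilon_1}$. By Step 1 these two groups together span the negative nilradical of $\k_\C$, so $u$ is a $\k$-lowest weight vector of the irreducible $\k$-module $W$. Hence $u$ is a scalar multiple of $v$. It follows that $W_0$ has exactly one irreducible summand, namely the $L$-module generated by $v$, whose lowest weight equals the $\k$-lowest weight of $W$.

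\emph{Step 3 (properness).} Suppose instead that $W_0=W$. Then $\tau$ vanishes on $(\q_\C\cap\k_\C)^-$. Now $Ker(\tau)\cap\k_{ss}$ is an ideal of $\k_{ss}\cong\mathfrak{su}(n)_\C$, and it contains the nonzero subspace $(\q_\C\cap\k_\C)^-$. Since $\mathfrak{su}(n)_\C$ is simple, this ideal is all of $\k_{ss}$. Therefore $\tau$ is trivial on $\k_{ss}$, and since $\z_\k$ acts by scalars, $\tau$ is one dimensional. This contradicts the hypothesis, so $W_0$ is a proper subspace.

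I expect the main obstacle to be the bookkeeping in Step 1. One has to confirm that the positive system implicit in Example~\ref{exa:example1} makes the $E_{\epsilon_i-\epsilon_1}$ negative root vectors, so that ``lowest'' rather than ``highest'' is the correct word in the statement. If the paper's convention turned out to be the opposite one, the same argument would go through with highest weights in place of lowest weights. Steps 2 and 3 are formal once Step 1 is settled.
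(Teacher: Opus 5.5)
Your proof is correct and follows the paper's argument. The paper likewise uses the splitting of $-\Psi_c$ into the negative roots of $\l$ and the roots $\epsilon_i-\epsilon_1$. From this it gets that the $\k$-lowest weight vector lies in the intersection, and that any $L$-lowest weight vector there is a $\k$-lowest weight vector. It obtains properness from the faithfulness of $\tau$ on the simple ideal $\mathfrak{su}(n)$, which is the same point as your ideal argument. Your explicit check that $W_0$ is $L$-stable, together with your direct use of $\t$-weight vectors (here $U=T$), makes the paper's extra step with $\tau(H_{\epsilon_1-\delta})$ unnecessary.
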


\begin{proof}
    We have $\tau_{\vert_{\mathfrak{su}(n)}}$ is a faithful representation, thus, for every $j$,  $\tau(E_{\epsilon_j -\epsilon_1})$ is not the zero operator and algebraic groups gives it is a nilpotent matrix. Also, the linear span of $E_{\epsilon_j -\epsilon_1}, j\geq 2$ is a abelian Lie algebra. Thus, Engel's theorem yields the subspace $ \cap_{2\leq i\leq n} Ker(\tau(E_{\epsilon_i -\epsilon_1})) $ is a proper subspace of $W$.

 Next, we show $ \cap_{2\leq i\leq n} Ker(\tau(E_{\epsilon_i -\epsilon_1})) $ is the $L$-irreducible representation of lowest weight vector equal to the $\k$-lowest weight vector. In fact, $-\Psi_c=\{ \epsilon_i -\epsilon_1, i\geq 2\}\cup \{ \epsilon_r -\epsilon_s,   r >s \geq 2 \}$. Thus, the $\k$-lowest weight vector of $\tau$ belongs to  $ \cap_{2\leq i\leq n} Ker(\tau(E_{\epsilon_i -\epsilon_1})) $ and the description of $-\Psi_c$ gives  any $L$-lowest weight vector in  $ \cap_{2\leq i\leq n} Ker(\tau(E_{\epsilon_i -\epsilon_1})) $ is a $\k$-lowest weight vector. Therefore,  $ \cap_{2\leq i\leq n} Ker(\tau(E_{\epsilon_i -\epsilon_1})) $ contains, up to a constant, a unique $L$-lowest weight vector and we have  $ \cap_{2\leq i\leq n} Ker(\tau(E_{\epsilon_i -\epsilon_1})) $ is a irreducible $L_{ss}$-module. The linear operator $\tau(H_{\epsilon_1 -\delta})$ commutes with $\l$, and the fact that the restriction from $U(n)$ to $U(n-1)$   is multiplicity free, we have that $\tau(H_{\epsilon_1 -\delta})$ leaves invariant the subspace  $ \cap_{2\leq i\leq n} Ker(\tau(E_{\epsilon_i -\epsilon_1})) $. Thus, $\tau(H_{\epsilon_1 -\delta})$ acts by constant times the identity in the intersection. Since, the lowest $\k$-weight vector $w_0 \not= 0$ lies in   $ \cap_{2\leq i\leq n} Ker(\tau(E_{\epsilon_i -\epsilon_1})) $, we have the constant is $(w_K(\lambda +\rho_n-\rho_c), \epsilon_1 -\delta)>0$, whence,  we obtain the intersection is a irreducible $L$-module.  \end{proof} Whence,  $\mathcal U(\h_0)W\cap  (\mathcal L_{W,H}^c  \cap \mathcal V^{(1)}$  is a proper subspace of both $ \mathcal U(\h_0)W\cap \mathcal V^{(1)}$, $  \mathcal L_{W,H}^c  \cap \mathcal V^{(1)}$,  and the  description of

 $\mathcal U(\h_0)W\cap \mathcal L_{W,H}^c\cap \mathcal V^{(1)}  \equiv V_{w_L(w_K (\lambda +\rho_n -\rho_c^K))+\rho_c^L}^L$.

 We also point out that $\mathcal L_{W,H}^c\cap \mathcal V^{(1)} =\{ [E_{-(\epsilon_1 -\delta)}\otimes w ]: w\in W\}\equiv res_L(\tau)$, and  $\mathcal U(\h_0)W\cap \mathcal V^{(1)} =\{L_{E_{-\epsilon_1 +\delta}}^\tau (w): w\in W\}\equiv res_L(\tau)$. A equivalence of
 $\mathcal L_{W,H}^c\cap \mathcal V^{(1)}$ with $\mathcal U(\h_0)W\cap \mathcal V^{(1)}$ is given by the map $D$ in \ref{sub:D1iso}\cite{OV4}, that in this case becomes $ [E_{-(\epsilon_1 -\delta)}\otimes w ] \mapsto L_{E_{-\epsilon_1 +\delta}}^\tau (w), w \in W$.  Finally,   $\dim (\mathcal L_{W,H}^c\cap \mathcal V^{(1)})/(\mathcal U(\h_0)W\cap \mathcal L_{W,H}^c\cap \mathcal V^{(1)})   =\dim(\p_{\h_0}^+ \otimes W)- \dim V_{w_L(w_K (\lambda +\rho_n -\rho_c^K))+\rho_c^L}^L   = \dim V_{\lambda +\rho_n  }^K -  \dim V_{w_L(w_K (\lambda +\rho_n -\rho_c^K))+\rho_c^L}^L$.
This concludes the first example \end{examp}.

\begin{examp}\label{exa:example2} Next, we analyze   $\g=\mathfrak{su}(n,1), \h =\mathfrak{s}( \mathfrak{u}(1,1) \oplus \mathfrak u(n-1))  $,   $\h_0 =\mathfrak s(\mathfrak u( 1)\oplus \mathfrak{u}(n-1,1))  $, $\p_\g^+=\sum_{1\leq j \leq n}\C E_{\epsilon_j -\delta}$, $\,\,\, \p_{\h}^+= \C E_{\epsilon_1 -\delta}$,

  $\p_{\h_0}^+ =\sum_{2\leq j\leq n}   \C E_{\epsilon_j -\delta}$.  $[p_{\h}^+, p_{\h_0}^- ]=\sum_{j\geq 2} \C E_{\epsilon_1 -\epsilon_j}$.  \ref{sub:vn} implies \\
 $\,\, \mathcal U(\h_0)W\cap \mathcal V^{(1)}= \{\sum_{2\leq t \leq n} c_t L_{ E_{-\epsilon_t +\delta}}^\tau(w), w \in W , c_t \in \C \}$

  $=\{ \sum_{2\leq t \leq n} [E_{-\epsilon_t +\delta}\otimes w_t ], w_t \in W   \}$.

In this case,  $\mathcal L_{W,H}^c=\{ D \in \mathcal U(\g) \otimes_{\mathcal U(\k_\C +\p_\g^+)} W: L_{E_{\epsilon_1 -\delta}}^\tau (D)=0  \}$. Now,  for $j\geq 2$, we have  $ L_{E_{\epsilon_1 -\delta}}^\tau [E_{-(\epsilon_t -\delta)} \otimes w_t] =\tau([E_{\epsilon _1 -\delta}, E_{-(\epsilon_t -\delta)}])(w_t)=\tau(E_{\epsilon_1 -\epsilon_t)})(w_t) $, and   we have

 $\mathcal U(\h_0)W\cap \mathcal L_{W,H}^c\cap \mathcal V^{(1)} \\ \phantom{xxxxxxxxxx}=\{ \sum_{2\leq t \leq n}[E_{\epsilon_t-\delta} \otimes w_t]  :   \sum_{2\leq i\leq n} \tau(E_{\epsilon_1 -\epsilon_t}))(w_t) =0\}$.

Hence,
 a way  to choose   $w_t  $, is to  consider the span  of the  matrices $E_{\epsilon_1-\epsilon_t}, t=2, \dots,n$, it is a abelian subalgebra consisting of nilpotent matrices. Since $\tau$ is not scalar representation, as in Example 1, we apply Engel's Theorem to $\tau(E_{\epsilon_1-\epsilon_t}), t=2, \dots,n$ and obtain a proper, $L$-invariant, subspace $W_0$ of $W$ so that $\tau(E_{\epsilon_1-\epsilon_t})(w)=0, t=2, \dots,n$ if and only $w \in W_0$.  It readily follows that $W_0$
 is a isotypic component of $res_{U(n-1)}(\tau)$ of lowest weight equal to the one of $\tau$. Thus, $\{\sum_{2\leq t \leq n}[E_{\epsilon_t-\delta} \otimes w_t]  : w_t \in W_0 \}$ is contained in $\mathcal U(\h_0)W\cap \mathcal L_{W,H}^c\cap \mathcal V^{(1)}$.  When we   proceed as in \ref{exa:example1}, we obtain that $\sum_t \, [\C E_{\delta-\epsilon_t } \otimes W_{E_{\epsilon_1-\epsilon_t }}]$ is contained in $\mathcal U(\h_0)W\cap \mathcal L_{W,H}^c\cap \mathcal V^{(1)}$. Here, $ W_{E_{\epsilon_1-\epsilon_t }}=ker(\tau(E_{\epsilon_1-\epsilon_t }))$

 We would hope that  the linear span of   $(Ad\otimes \tau) (L)\sum_t \, [\C E_{\delta-\epsilon_t } \otimes W_{E_{\epsilon_1-\epsilon_t }}]$ is  equal to $\mathcal U(\h_0)W\cap \mathcal L_{W,H}^c\cap \mathcal V^{(1)}$.\end{examp}

 \subsection{Analysis of the equality $  \mathcal V^{(1)}\cap \mathcal L_{W,H}^c =  \mathcal V^{(1)}\cap \mathcal U(\h_0)W  $ } \smallskip
 A way to analyze the equality $\mathcal U(\h_0)W=\mathcal L_{W,H}^c$ is to understand the equality  $  \mathcal V^{(1)}\cap \mathcal L_{W,H}^c = \mathcal V^{(1)}\cap \mathcal U(\h_0)W  $ and the triple bracket  $[[\p_{\h_0}^+, \p_{\h}^-],\p_{\h_0}^+]$, two results on the topic are Proposition~\ref{sub:7}, Proposition~\ref{prop:equal}. A consequence of the analysis of $  \mathcal V^{(1)}\cap \mathcal L_{W,H}^c $ is that shields light on the structure of the symmetry breaking operators represented by  first order differential operators.
   We recall $\mathcal L_{W,H}^c=\{ D \in \mathcal U(\g) \otimes_{\mathcal U(\k+\p^+)} W : L_X^\tau(D)=0 \, \forall X \in \p_\h^+\}$.  Before we state the next result we would like to point out that in \cite[Lemma 3]{Va} it is shown that the equality $\mathcal L_{W,H}^c  =   \mathcal U(\h_0)W  $ is {\em equivalent} to the equality $\mathcal L_{W,H_0}^c  =   \mathcal U(\h)W  $. This results also follows from the proof of the next Proposition as we will verify.

  \begin{prop}\label{sub:7} We recall  our hypothesis   $H/L \rightarrow G/K$ is a holomorphic immersion, hence,  $(L_\cdot^\tau ,\mathcal O(\mathcal D,W)\cap L_\tau^2(G,W))$ is $H$-admissible holomorphic representation. Then,

  a) For a simple Lie algebra $\g$ so that the semisimple factor of $\k$ is a simple Lie algebra, equivalently,  $\g\ncong \mathfrak{su}(m,n) , n\geq 2, m\geq 2$,  the following equivalence holds:

\medskip

  $  \mathcal V^{(1)}\cap \mathcal L_{W,H}^c  =  \mathcal V^{(1)}\cap \mathcal U(\h_0)W  $ if and only if $(\tau,W)$ is a one dimensional representation.

\smallskip

 b) For $\g= \mathfrak{su}(m,n),  m\geq 2, n\geq 2 $, we write $\k=\mathfrak z_K +\mathfrak{su}(m)+\mathfrak{su}(n)$, and $\tau =\pi^{\mathfrak z_K} \otimes \pi^{\mathfrak{su}(m) } \otimes \pi^{\mathfrak{su}(n)}$  as a tensor product of irreducible representations. We denote by $\pi_0^S$  the trivial irreducible representation of the group $S$.

We have four cases:

 b1) For $\g= \mathfrak{su}(n,n)$, $\h=\mathfrak{sp}(n,\R)$, $\h_0=\mathfrak{so}^\star (2n) $; or $\h=\mathfrak{so}^\star (2n) $ $\h_0=\mathfrak{sp}(n,\R)$ the equivalence in a) holds.

 b2) When $\g= \mathfrak{su}(m,n)$, $\h=\mathfrak{su}(k,l)+\mathfrak{su}(m-k,n-l)+\mathfrak u(1)$, $1\leq k <m, 1\leq l <n$ the equivalence in a) holds.

b3) If $\g= \mathfrak{su}(m,n)$, $\h=\mathfrak{su}(m,l)+\mathfrak{su}(n-l)+\mathfrak u(1),  1\leq l <n$, we obtain:

 The equality $  \mathcal V^{(1)}\cap \mathcal L_{W,H}^c  =  \mathcal V^{(1)}\cap \mathcal U(\h_0)W  $ is true if and only if $\tau = \pi^{\mathfrak z_K} \otimes \pi_0^{\mathfrak{su}(m) } \otimes \pi^{\mathfrak{su}(n)}$.

 b4)  For $\g= \mathfrak{su}(m,n)$, $\h=\mathfrak{su}(l)+ \mathfrak{su}(m-l,n)+ \mathfrak u(1)$, $1\leq l <m$, the equality  $  \mathcal V^{(1)}\cap \mathcal L_{W,H}^c  =  \mathcal V^{(1)}\cap \mathcal U(\h_0)W  $ holds if and only if $\tau= \pi^{\mathfrak z_K} \otimes \pi^{\mathfrak{su}(m) } \otimes \pi_0^{\mathfrak{su}(n)}$.

\end{prop}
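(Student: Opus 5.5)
The plan is to reduce the equality to a statement about $\tau$ restricted to the subalgebra $[\p_\h^+,\p_{\h_0}^-]$. Then we read off, case by case, when this subalgebra lies in $Ker(\tau)$.

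\emph{Step 1.} By the first order computation in the proof of Proposition~\ref{prop:orderkisnormal}, for $X\in\p_\h^+$, $Y\in\p_{\h_0}^-$ we have $L_X^\tau[Y\otimes w]=[1\otimes\tau([X,Y])w]$. Hence $\mathcal V^{(1)}\cap\mathcal U(\h_0)W\subseteq\mathcal L_{W,H}^c$ holds if and only if $[\p_\h^+,\p_{\h_0}^-]\subseteq Ker(\tau)$.

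\emph{Step 2.} For the reverse inclusion we use the duality isomorphism $\mathcal L_{W,H}^c\cong\mathcal U(\h_0)W$ of \ref{sub:D1iso}. It is $L$-equivariant and preserves degree, because it is built from the Killing form pairing $\p_{\h_0}^-\leftrightarrow\p_{\h_0}^+$. Therefore $\dim\mathcal V^{(1)}\cap\mathcal L_{W,H}^c=\dim\mathcal V^{(1)}\cap\mathcal U(\h_0)W$. One inclusion together with equal finite dimensions gives equality. So
\[\mathcal V^{(1)}\cap\mathcal L_{W,H}^c=\mathcal V^{(1)}\cap\mathcal U(\h_0)W \iff [\p_\h^+,\p_{\h_0}^-]\subseteq Ker(\tau).\]
This formulation is symmetric under $\h\leftrightarrow\h_0$ after conjugation, which explains the remark attributed to \cite[Lemma 3]{Va}.

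\emph{Step 3.} We now identify the ideal of $\k_\C$ generated by $[\p_\h^+,\p_{\h_0}^-]$. Since $\tau$ is irreducible, $Ker(\tau)\cap\k_{ss}$ is an ideal of $\k_{ss}$. Also $[\p_\h^+,\p_{\h_0}^-]\subseteq\q_\C\cap\k_\C\subseteq\k_{ss}$ is nonzero; otherwise $\p_{\h_0}^+$ would be $\k$-stable, contradicting irreducibility of $\p_\g^+$. Thus the condition says that $\tau$ kills every simple ideal of $\k_{ss}$ which $[\p_\h^+,\p_{\h_0}^-]$ meets nontrivially.

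For a), $\k_{ss}$ is simple, so the condition is $\tau(\k_{ss})=0$. Since $\z_\k$ is one dimensional, this is equivalent to $\tau$ being one dimensional.

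For b), $\k_{ss}=\mathfrak{su}(m)+\mathfrak{su}(n)$, and we compute the root vectors of $[\p_\h^+,\p_{\h_0}^-]$ explicitly with the standard roots $\epsilon_i-\delta_j$.
\begin{itemize}
\item In b1) the representation of $L$ on $\q\cap\k$ is irreducible, so by \ref{sub:triplebrack} the bracket equals $\q_\C\cap\k_\C$. This meets both factors, so $\tau$ must vanish on all of $\k_{ss}$.
\item In b2), brackets $[E_{\epsilon_i-\delta_j},E_{-(\epsilon_{i'}-\delta_{j'})}]$ with $(i,j)$ of $\h$-type and $(i',j')$ of $\h_0$-type produce both $E_{\epsilon_i-\epsilon_{i'}}$ and $E_{\delta_{j'}-\delta_j}$ type vectors. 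So both factors are met, and again $\tau$ must be trivial on $\k_{ss}$.
\item In b3), \ref{sub:bracketphh0} a) gives $[\p_\h^+,\p_{\h_0}^-]=(\q_\C\cap\k_\C)^-$, which lies in only one factor. Here one must track carefully which factor, $\mathfrak{su}(m)$ or $\mathfrak{su}(n)$, it lies in. The statement says $\pi^{\mathfrak{su}(m)}$ must be trivial, so the root vectors should be of the form $E_{\epsilon_i-\epsilon_{i'}}$. I would verify this against the coordinate convention for which of $\p^\pm$ the $\mathfrak{su}(n-l)$ block sees.
\item Case b4) is the mirror image of b3), obtained by interchanging the roles of $m$ and $n$.
\end{itemize}

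The main obstacle I expect is Step 2: I must confirm that the isomorphism $D$ genuinely respects the grading $\mathcal V^{(1)}$ on both sides. This should follow from its definition via $D_0,D_1$, where degree one polynomials correspond to $\p_{\h_0}^-\otimes W$.
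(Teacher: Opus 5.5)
Your Steps 1--3 are the paper's own argument. They use the identity $L^\tau_yL^\tau_x w=\tau([y,x])w$ for $y\in\p_\h^+$, $x\in\p_{\h_0}^-$, the nonvanishing of $[\p_\h^+,\p_{\h_0}^-]\subseteq\q_\C\cap\k_\C\subseteq\k_{ss}$, equality of dimensions, and then the location of the ideal generated by the bracket. Your reason for nonvanishing, ``$\p_{\h_0}^+$ would be $\k$-stable'', needs a line of justification. The paper gets it from $\k_\C=[\p^+,\p^-]$: if $[\p_\h^+,\p_{\h_0}^-]=0$, this forces $\k=\l$.

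The grading worry in Step 2 is not an obstacle. $\mathcal V^{(1)}\cap\mathcal L_{W,H}^c$ is the space of linear $W$-valued polynomials on $\p_{\h_0}^+$, and $\mathcal V^{(1)}\cap\mathcal U(\h_0)W\cong\p_{\h_0}^-\otimes W$. Both therefore have dimension $\dim\p_{\h_0}^+\cdot\dim W$, which is exactly how the paper concludes.

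Parts a), b1), b2) go through. In b1), $\sigma$ interchanges the two $\mathfrak{su}(n)$ factors, so $\q_\C\cap\k_\C$ is an anti-diagonal that meets neither factor. What you actually need, and have, is that it projects nontrivially onto both factors, so the ideal it generates is $\k_{ss}$.

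The step that fails is b3), and with it b4). You took the conclusion from the printed statement, asserted that the bracket is spanned by vectors $E_{\epsilon_i-\epsilon_{i'}}$, and left the check to a choice of convention. No convention can make this true. The subalgebra $\h=\mathfrak{su}(m,l)+\mathfrak{su}(n-l)+\mathfrak u(1)$ uses all $m$ positive coordinates. Hence $\l=\mathfrak s(\mathfrak u(m)+\mathfrak u(l)+\mathfrak u(n-l))\supseteq\mathfrak{su}(m)$, and so $[\p_\h^+,\p_{\h_0}^-]\subseteq\q_\C\cap\k_\C\subseteq\mathfrak{su}(n)_\C$. Explicitly, take $\Psi_n(\h)=\{\epsilon_i-\delta_j: j\le l\}$ and $\Psi_n(\h_0)=\{\epsilon_p-\delta_q: q>l\}$. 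The bracket $[E_{\epsilon_i-\delta_j},E_{-(\epsilon_p-\delta_q)}]$ vanishes for $i\neq p$ and is a multiple of $E_{\delta_q-\delta_j}$ for $i=p$.

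So your argument, carried out correctly, proves the following. In b3), equality holds if and only if $\tau=\pi^{\mathfrak z_K}\otimes\pi^{\mathfrak{su}(m)}\otimes\pi_0^{\mathfrak{su}(n)}$. In b4), where $\l\supseteq\mathfrak{su}(n)$, equality holds if and only if $\tau=\pi^{\mathfrak z_K}\otimes\pi_0^{\mathfrak{su}(m)}\otimes\pi^{\mathfrak{su}(n)}$. This is what the paper's own computation in \ref{sub:b)} yields (``$Ker(\tau)$ contains the ideal $\mathfrak{su}(n)_\C$'' in case b3)). It is also what part c) of the Remark following Proposition~\ref{prop:everynorfortriple} states. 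The trivial factors in the printed b3) and b4) are interchanged. You should prove the corrected version and point out the misprint, rather than adjust the root computation to match it.
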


 \begin{proof} To begin with, we recall a Theorem of Harish-Chandra that shows: a $K$-type of a Discrete Series representation never is the trivial representation of $K$. We point out that the lowest $K$-type of a holomorphic Discrete Series restricted to the center of $K$ is always different to the trivial representation. In fact, let $\lambda$ denotes the Harish-Chandra parameter of $V_\tau$, then the highest weight of $(\tau, W)$ is $\lambda+\rho_n -\rho_K$. Since, the system of positive roots defined by $\lambda$ is holomorphic, we have that $\rho_n$ is orthogonal to every compact root and that dual of the center of $Lie(K)$  is spanned by $i\rho_n$. Thus $(\lambda +\rho_n -\rho_K, \rho_n)= (\lambda +\rho_n, \rho_n)>0$. Hence, we may conclude that $Ker(\tau):=Ker( \dot{\tau}) $ is always an ideal contained in $\k_{ss}$. When $Ker(\tau)$ is equal to
  the semisimple factor of $\k$ we call $L_.^\tau$, {\it a scalar Discrete Series}.  We analyze the five statements at the same time till  we split up and consider each particular case.   \ref{sub:vn} yields $ \mathcal V^{(1)}\cap \mathcal U(\h_0)W =lin.span\{ L_a^\tau (w): a\in \p_{\h_0}^- , w \in W\}$. The definition of $\mathcal L_{W,H}^c$ gives  $  \mathcal V^{(1)}\cap \mathcal L_{W,H}^c   =\{ \sum_j L_{b_j}^\tau (w_j): L_x^\tau(  \sum_j L_{b_j}^\tau (w_j))=0, \, \text{with}\, b_j \in \p^- , w_j \in W, \forall x \in \p_{\h}^+ \}= \mathcal V^{(1)}\cap \mathcal P(\p_{\h_0}^+,W)$.
Since $\mathcal V^{(1)}$ is isomorphic to $\p^- \otimes W$ via the map $L_x^\tau(w)\leftarrowtail x\otimes w$, we have $\dim \mathcal V^{(1)}\cap \mathcal U(\h_0)W = \dim \p_{\h_0}^+ \dim W $, also $\dim  \mathcal V^{(1)}\cap \mathcal L_{W,H}^c   = \dim \p_{\h_0}^+ \dim W$.

 The equality of sets   $  \mathcal V^{(1)}\cap \mathcal L_{W,H}^c   = \mathcal V^{(1)}\cap \mathcal U(\h_0)W $ implies $\forall y \in \p_{\h}^+, \forall x \in \p_{\h_0}^-, \forall w\in W$ the equality $ L_y^\tau(L_x^\tau w)=0$. Since, $L_x^\tau L_y^\tau w -L_y^\tau L_x^\tau w =L_{[x,y]}^\tau w =\tau([x,y])w$, we obtain
 $0=-L_x^\tau (L_y^\tau w)+\tau([y,x])w$. Also, for $y\in \p^+, w \in W$ we have $L_y^\tau (w)=0$. Thus, the subspace $[\p_\h^+,\p_{\h_0}^-]$ is contained in kernel of $\tau$. We note  that the involution $\sigma$ acts by minus one in  $[\p_\h^+,\p_{\h_0}^-]$, and by hypothesis the involution $\sigma$ acts by plus one on $\z_\k$,  $\k=\z_\k +\k_{ss}$.  Thus, the ideal of $\k$  spanned by $[\p_\h^+,\p_{\h_0}^-]$  is contained in the kernel of $\tau$ and  in $\k_{ss}$.

 Next, we verify $[\p_\h^+,\p_{\h_0}^-]$ is always a nonzero subspace. For this, we recall a result valid for any symmetric riemannian pair $(\g,\k)$, that is,  $\k=[\p,\p]$, (this is so, owing to $[\p,\p]+\p$ is a ideal in $\g$). Our hypothesis yields  $\p^+=\p_\h^+ +\p_{\h_0}^+, [\p^+,\p^+]=0$, hence $\k=[\p_\h^+,\p_\h^-]+[\p_{\h_0}^+,\p_{\h_0}^-]+[\p_{\h_0}^+,\p_{\h}^-]
 +[\p_{\h_0}^-,\p_{\h}^+]$.  The first two summands are  contained in $\l$, whereas the last two summands are contained in $\q_\C \cap \k_\C$, hence, we have $\l=[\p_\h^+,\p_\h^-]+[\p_{\h_0}^+,\p_{\h_0}^-]$, $\q_\C \cap \k_\C=[\p_{\h_0}^+,\p_{\h}^-]
+[\p_{\h_0}^-,\p_{\h}^+]$. Also,  $\overline{[\p_{\h_0}^+,\p_{\h}^-]}
 =[\p_{\h_0}^-,\p_{\h}^+]$. Whence, if $[\p_\h^+,\p_{\h_0}^-]$ were the zero subspace, we would have $\k=\l$ and $G=H$, an absurd. Putting together,  we have: \\ if  $ \mathcal V^{(1)}\cap \mathcal L_{W,H}^c =  \mathcal V^{(1)}\cap \mathcal U(\h_0)W$,  then  $\{0\}\subsetneq [\p_\h^+,\p_{\h_0}^-]
\subseteq Ker(\tau) \subseteq \k_{ss}$ and we conclude

\noindent
     {\it whenever   $ \mathcal V^{(1)}\cap \mathcal L_{W,H}^c  =  \mathcal V^{(1)}\cap \mathcal  U(\h_0)W$  and the semisimple factor of $\k$ is a simple Lie algebra, we have $ \tau $ is one dimensional representation.}

\smallskip
 After a computation based on  \cite[Table 3]{Va} \cite[Table 3 ]{OV3} or \cite[Table 3]{KOadv} and $\mathfrak{so}(2,4)\equiv \mathfrak{su}(2,2) $ we find: for every $\g\ncong \mathfrak{su}(m,n), m\geq 2, n\geq 2$, as well as for  $\g=\mathfrak{su}(p,1), p\geq 2$, the semisimple factor of  $\k$ is a simple Lie algebra, therefore,  the direct implication in a) follows.

   For the converse statement in a),   in notation of the previous paragraph we have for $ y\in \p_{\h}^+,   x \in \p_{\h_0}^-,   w\in W$ that  $L_y^\tau (L_x^\tau w)=L_x^\tau(L_y^\tau w)+\tau([y,x])w$, now $L_y^\tau (w)=0$ and $[y,x]\in \k_{ss}$,  and since either $\tau$ is one dimensional representation or $[\p_{\h}^+,  \p_{\h_0}^-] \subset Ker(\tau)$, we have $\tau([y,x])=0$, hence, we obtain $ \mathcal V^{(1)}\cap \mathcal U(\h_0)W \subset   \mathcal V^{(1)}\cap \mathcal L_{W,H}^c$. Since both spaces are equidimensional  the   equality and  a) follows.


\medskip

 We are left to  verify $b1), b2), b3), b4)$.  Except for $m=1$ or $n=1$,    $\k_{ss}$ is the sum of two simple ideals. In \ref{sub:b)} we compute that in case $b1), b2)$,  $[\p_\h^+,\p_{\h_0}^-]$  nontrivially intersects both simple ideals and that in cases $b3), b4)$ $[\p_\h^+,\p_{\h_0}^-]$ is contained in one simple ideal. Thus, in case $b1), b2)$ the ideal spanned by $[\p_\h^+,\p_{\h_0}^-]$ is equal to $\k_{ss}$, whereas in cases $ b3), b4)$ the ideal spanned by  $[\p_\h^+,\p_{\h_0}^-]$ is equal to one simple factor of $\k$. Therefore, the verification of $ b1),b2)$ is the same as the one we have presented to show $a)$.

 For $b3), b4)$. If the equality $ \mathcal V^{(1)}\cap \mathcal L_{W,H}^c  =  \mathcal V^{(1)}\cap \mathcal  U(\h_0)W$  holds, then, as in the proof of a),  $Ker(\tau)$ contains the ideal spanned by $[\p_\h^+,\p_{\h_0}^-]$, hence, $Ker(\tau)$ is either equal to one simple factor of $\k_{ss}$ or equal to $\k_{ss}$, thus, the shape of the  structure of  $\tau$ is as claimed in $b3), b4)$. In \ref{sub:b)} we compute the ideal that contains $[\p_\h^+,\p_{\h_0}^-]$ and the precise result follows.  The converse statement readily follows. \end{proof}

 \subsubsection{Computation for b)}\label{sub:b)} Here,  $\g=\mathfrak{su}(m,n), m\geq 2, n\geq 2$.
 We compute that in cases $b1), b2)$   $[p_\h^+,p_{\h_0}^-]$   intersects both factors $\mathfrak{su}(m)_\mathbb C, \mathfrak{su}(n)_\mathbb C$. In case $b3)$,  $[p_\h^+,p_{\h_0}^-]$  is    contained in  $  \mathfrak{su}(n)_\mathbb C$. In case $b4)$,  $[p_\h^+,p_{\h_0}^-]$  is   contained in  $  \mathfrak{su}(m)_\mathbb C$.

  We fix as Cartan subalgebra   $\t$ of $\mathfrak {su}(m,n)$ the set of diagonal matrices in $\mathfrak {su}(m,n).$   For certain orthogonal basis $\epsilon_1, \dots ,\epsilon_m, \delta_1, \dots, \delta_n$ of the dual vector space to the subspace of diagonal matrices in $\mathfrak{gl}(m+n, \mathbb C),$ we may, and will choose  as compact positive roots $ \{ \epsilon_r - \epsilon_s, \delta_p -\delta_q, 1 \leq  r < s \leq m, 1 \leq p < q \leq n \},$ the set of noncompact positive  roots is $ \Psi_n= \{  (\epsilon_r - \delta_q) \}.$ We may and will fix root vectors $\{E_\alpha : \alpha \in \Phi(\g,\t)\}$ such that $[E_\alpha, E_\beta]=E_{\alpha +\beta}$.

  We have, $\h=\mathfrak{su}(k,l)+  \mathfrak{su}(m-k,n-l)+\z_\l$, $\h_0= \mathfrak{su}(k,n-l)+  \mathfrak{su}(m-k,l) +\z_\l$.

  $\Psi_n \cap \Phi(\h,\t)=\{ \epsilon_i -\delta_j, \epsilon_a -\delta_b, 1\leq i\leq k, 1\leq j \leq l, k+1 \leq a \leq m, l+1 \leq b \leq n \}$.

  $\Psi_n \cap \Phi(\h_0,\t)=\{ \epsilon_p -\delta_q, \epsilon_r -\delta_s, 1\leq p\leq k,   l+1 \leq q \leq n , k+1 \leq r \leq m,  1 \leq s \leq l \}$.

 We compute $[E_{\beta_1} ,E_{-\beta_2}]=E_{\beta_1 -\beta_2},$ with $\beta_1 \in \Psi_n \cap \Phi(\h,\t), \beta_2 \in \Psi_n \cap \Phi(\h_0,\t)$

$$ (\epsilon_i -\delta_j)- \left\{ \begin{array}{lllll}
 ( \epsilon_p -\delta_q)= -\delta_j +\delta_q & \text{if}\, i=p, &      [E_{\beta_1} ,E_{-\beta_2}]=E_{-\delta_j +\delta_q} \\
 (\epsilon_r -\delta_s)=-\epsilon_i +\epsilon_r & \text{if}\, j=s, &    [E_{\beta_1} ,E_{-\beta_2}]= E_{-\epsilon_i +\epsilon_r}
\end{array}
\right. $$
Hence, $[p_{\mathfrak{su}(k,l)}^+,p_{{\mathfrak{su}(k,n-l)}}^-] =
\sum_{1\leq j\leq l, l+1\leq q \leq n} \C E_{-\delta_j +\delta_q} \subset  \mathfrak{su}(n)_\C  $,

$[p_{\mathfrak{su}(k,l)}^+,p_{{\mathfrak{su}(m-k,l)}}^-] =
\sum_{1\leq i\leq k, k+1\leq r \leq m} \C E_{-\epsilon_i +\epsilon_r} \subset
\mathfrak{su}(m)_\C $.

Similarly,  $[p_{\mathfrak{su}(m-k,n-l)}^+,p_{{\mathfrak{su}(k,n-l)}}^-] =\sum \C E_{-\epsilon_b +\epsilon_a} \subset  \mathfrak{su}(m)_\C  $,

$[p_{\mathfrak{su}(m-k,n-l)}^+,p_{{\mathfrak{su}(m-k,l)}}^-] =\sum \C E_{-\delta_b +\delta_s} \subset  \mathfrak{su}(n)_\C $.

\medskip
 For $b3)$, $\h=   \mathfrak{su}(m,l)+ \mathfrak{su}(n-l)+\z_\l$ $\h_0= \mathfrak{su}(m,n-l)+  \mathfrak{su}(l)+\z_\l$.   $\Psi_n \cap \Phi(\h,\t)=\{ \epsilon_i -\delta_j,  1\leq i\leq m, 1\leq j \leq l  \}$.

  $\Psi_n \cap \Phi(\h_0,\t)=\{ \epsilon_p -\delta_q,     1 \leq p \leq m , l+1 \leq q \leq n \}$.

 We compute $[E_{\beta_1} ,E_{-\beta_2}]=E_{\beta_1 -\beta_2},$ with $\beta_1 \in \Psi_n \cap \Phi(\h,\t), \beta_2 \in \Psi_n \cap \Phi(\h_0,\t)$

$ (\epsilon_i -\delta_j)-
( \epsilon_p -\delta_q)= -\delta_j +\delta_q, \,\, \text{if}\, i=p,
[E_{\beta_1} ,E_{-\beta_2}]=E_{-\delta_j +\delta_q}\in \mathfrak{su}(n)_\C  $.
Thus, $[\p_\h^+ , \p_{\h_0}^- ] \subset \mathfrak{su}(n)_\C $,  equal to $\cap (\q_\C \cap \k_\C)^-$,
and $Ker(\tau)$ contains
the ideal $\mathfrak{su}(n)_\C$.

\medskip

 For $b4)$, $\h=  \mathfrak{su}(l)+ \mathfrak{su}(m-l,n)+  \z_\l$ $\h_0= \mathfrak{su}(l,n)+  \mathfrak{su}(m-l)+\z_\l$

$\Psi_n \cap \Phi(\h,\t)=\{ \epsilon_i -\delta_j, l+1\leq i\leq m, 1\leq j \leq n,  \}$.

$\Psi_n \cap \Phi(\h_0,\t)=\{ \epsilon_p -\delta_q,  1\leq p\leq l,   1 \leq q \leq n   \}$.

 We compute $[E_{\beta_1} ,E_{-\beta_2}]=E_{\beta_1 -\beta_2},$ with $\beta_1 \in \Psi_n \cap \Phi(\h,\t), \beta_2 \in \Psi_n \cap \Phi(\h_0,\t)$

$  (\epsilon_i -\delta_j)-
( \epsilon_p -\delta_q)=
\epsilon_i -\epsilon_p \,\, \text{if}\, j=q,     [E_{\beta_1} ,E_{-\beta_2}]=
E_{\epsilon_i -\epsilon_p}\in \mathfrak{su}(m)_\mathbb C$. Thus,
$[\p_\h^+ , \p_{\h_0}^- ] \subset \mathfrak{su}(m)_\C  $,  equal to $(\q_\C \cap \k_\C)^-$,
and  $Ker(\tau)$ contains
the ideal $\mathfrak{su}(m)_\C$.
\medskip

\subsubsection{}\label{sub:forb1}  For $b1)$. The simple roots for one of the holomorphic system of positive roots $\Psi$ in  $\Phi(\mathfrak{su}(n,n), \t)$ are $\alpha_1, \dots, \alpha_{n-1}, \alpha_n, \alpha_{n+1},\dots, \alpha_{2n-1}$, here $\alpha_1, \alpha_{2n-1}$ are the roots that correspond to the end points of the Dynkin diagram, $\alpha_n$ is a noncompact root and $\alpha_i, i\not= n$ is a compact root. The involutive outer automorphism $\sigma$ of $\mathfrak{su}(n,n)$ that we consider  is so that $\sigma(\alpha_k)=\alpha_{2n-k}, k=1,\dots, 2n-1 $. Thus, $\sigma(\alpha_n)=\alpha_n$.

  Moreover, there exists a set   of root vectors $E_\alpha$ so that for all  $\alpha \in \Phi(\mathfrak{su}(n,n), \t),  \sigma(E_\alpha)=E_{\sigma(\alpha)}$.

   Let $\u_\mathbb C=\t_\mathbb C^\sigma$. Whence,  $\h_\C:=\mathfrak{sp}(2n,\R)_\C =\u_\mathbb C +\sum_{\{\alpha :\alpha\not= \sigma(\alpha)\}} \C (E_\alpha +E_{\sigma(\alpha)})+ \sum_{\{\alpha :\alpha= \sigma(\alpha)\} } \C E_\alpha$ and $\h_{0\C}:=\mathfrak{so}^\star(2n)_\C=\u_\mathbb C +\sum_{\{\alpha :\alpha\not= \sigma(\alpha)\}} \C (E_\alpha -E_{\sigma(\alpha)})$. The noncompact positive roots $\Psi_n$ are $\beta_{ij} :=\alpha_i +\dots +\alpha_n +\dots +\alpha_j$ with $1\leq i \leq n, n\leq j \leq 2n-1$. Thus, $\Psi_n{(\h, \u)}=\{\frac12(\alpha +\sigma(\alpha)): \alpha \not=\sigma(\alpha), \alpha \in \Psi_n\} \cup \{\alpha: \alpha = \sigma(\alpha), \alpha \in \Psi_n\}$, $\Psi_n{(\h_0,\u)}=\{\frac12(\alpha +\sigma(\alpha)): \alpha \not=\sigma(\alpha), \alpha \in  \Psi_n\}$. The positive roots for each simple factor of $\k_{ss}$ are:  $\Psi(\mathfrak{su}(n), \t_1)=\{ \alpha_i +\cdots +\alpha_j : 1\leq i \leq j \leq n-1\}$, $\Psi(\mathfrak{su}(n), \t_2)=\{ \alpha_i +\cdots +\alpha_j : n+1\leq i \leq j \leq 2n-1\}$.  For further use, we write the simple roots for both $\Psi (\h, \u), \Psi (\h_0, \u)$. Respectively, they are: $\{ \frac12 (\alpha_1 +\sigma(\alpha_1)), \cdots, \frac12(\alpha_{n-1} +\sigma(\alpha_{n-1})),\alpha_n\}$,  $\{ \frac12 (\alpha_1 +\sigma(\alpha_1)), \cdots, \frac12(\alpha_{n-1} +\sigma(\alpha_{n-1})), \frac12(\alpha_{n-1} +\sigma(\alpha_{n-1})) +\alpha_n\}$.

 We compute for $\frac12 (\beta_{ij}+\sigma(\beta_{ij})) \in \Psi_n (\h, \u)$, $\frac12 (\beta_{ab}+\sigma(\beta_{ab})) \in   \Psi_n (\h_0, \u)$, $[E_{ \frac12 (\beta_{ij}+\sigma(\beta_{ij})) }, E_{-\frac12 (\beta_{ab}+\sigma(\beta_{ab}))}]$, for $i=a, j<b$ we obtain $\frac12 (\beta_{ij}+\sigma(\beta_{ij}))-\frac12 (\beta_{ib}+\sigma(\beta_{ib}))=\alpha_{j+1}+\cdots +\alpha_b \in \Psi(\mathfrak{su}(n), \t_2)$, whereas for $i=a $ and $j>b$ we obtain a negative root in $\Phi(\mathfrak{su}(n), \t_2)$, and for $j=b, i\leq a$ we obtain $\frac12 (\beta_{ij}+\sigma(\beta_{ij}))-\frac12 (\beta_{aj}+\sigma(\beta_{aj}))=\alpha_{i}+\cdots +\alpha_{a-1} \in \Psi(\mathfrak{su}(n), \t_1)$, meanwhile for $i>a$ we obtain a negative root in $\Phi(\mathfrak{su}(n), \t_1)$. Therefore, $[\p_\h^+,\p_{\h_0}^-]$ has a nonzero intersection with each simple factor for the complexification of $ \k_{ss}$ as well as   $[\p_\h^+,\p_{\h_0}^-]$ intersects nontrivially to both $(\q_\C \cap \k_\C)^\pm$. Whence,  $[\p_\h^+,\p_{\h_0}^-]=(\q_\C \cap \k_\C)$.  Another proof of the previous equality is: the representation of $\l$ in $\k_\C/\l_\C$ is irreducible, and  $[\p_\h^+,\p_{\h_0}^-]$ is a non zero $L$-invariant subspace. Thus, $[\p_\h^+,\p_{\h_0}^-]=(\q_\C \cap \k_\C)$. The same computation gives the same conclusion for $\g=\mathfrak{su}(n,n)$, $\h= \mathfrak{so}^\star(2n  )$, $\h_0= \mathfrak{sp} (n,\R)$.   Now,
we have concluded  the computation for Proposition~\ref{sub:7}.

\begin{cor} For a scalar holomorphic Discrete Series
 the equality $ \mathcal V^{(1)}\cap \mathcal U(\h_0)W =   \mathcal V^{(1)}\cap \mathcal L_{W,H}^c$
is always true.
\end{cor}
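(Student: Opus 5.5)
The plan is to deduce the Corollary from the converse direction in the proof of Proposition~\ref{sub:7}, which I will redo briefly since it only uses that $\tau$ kills $[\p_\h^+,\p_{\h_0}^-]$. A scalar holomorphic Discrete Series means, as defined in that proof, that $Ker(\tau)=\k_{ss}$. In particular $\tau$ is one dimensional, and $\tau(\k_{ss})=0$. From the proof of Proposition~\ref{sub:7} we know that $[\p_\h^+,\p_{\h_0}^-]\subset \q_\C\cap\k_\C$. Since $\z_\k\subset \l$, we also have $\q_\C\cap\k_\C\subset \k_{ss}$. Hence $\tau([y,x])=0$ for all $y\in\p_\h^+$ and $x\in\p_{\h_0}^-$. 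This step is uniform: it needs no case distinction between $\k_{ss}$ simple and $\g\cong\mathfrak{su}(m,n)$, which is why the Corollary holds for every pair.

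Next I show the inclusion $\mathcal V^{(1)}\cap \mathcal U(\h_0)W\subset \mathcal V^{(1)}\cap\mathcal L_{W,H}^c$. By \ref{sub:vn}, the left side is spanned by the vectors $L_x^\tau w=[x\otimes w]$ with $x\in\p_{\h_0}^-$ and $w\in W$. For $y\in\p_\h^+$ the commutator identity already used gives
$$L_y^\tau L_x^\tau w=L_x^\tau L_y^\tau w+\tau([y,x])w.$$
The first term vanishes because $L_y^\tau w=0$ for $y\in\p_\g^+$. The second term vanishes by the first paragraph. So every such vector is annihilated by $\p_\h^+$, and by \ref{eq:Lwh} it lies in $\mathcal L_{W,H}^c$.

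Finally I upgrade the inclusion to equality by a dimension count. As shown in the proof of Proposition~\ref{sub:7}, both $\mathcal V^{(1)}\cap \mathcal U(\h_0)W$ and $\mathcal V^{(1)}\cap \mathcal L_{W,H}^c=\mathcal V^{(1)}\cap\mathcal P(\p_{\h_0}^+,W)$ have dimension $\dim\p_{\h_0}^+\dim W$. The first count comes from the isomorphism $D_1$ in \ref{sub:D1iso}. The second comes from identifying degree-one polynomials on $\p_{\h_0}^+$ with $\p_{\h_0}^-\otimes W$ via the Killing form.

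The only point needing care is the inclusion $\q_\C\cap\k_\C\subset\k_{ss}$, and this is exactly the stated fact $\z_\k\subset\l$ for $\g$ simple. In the tensor product case, where $\z_\k\not\subset\l$, the argument would fail, and the Corollary is stated only under the standing hypotheses of the section.
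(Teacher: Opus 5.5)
Your proof is correct and is essentially the paper's argument: the Corollary is read off from Proposition~\ref{sub:7}, whose converse direction is exactly your identity $L_y^\tau L_x^\tau w = L_x^\tau L_y^\tau w + \tau([y,x])w$ with $[y,x]\in\q_\C\cap\k_\C\subset\k_{ss}\subset Ker(\tau)$, followed by the observation that both spaces have dimension $\dim\p_{\h_0}^+\dim W$. The only difference is that you run this once for all pairs instead of going through the case list a), b1)--b4). Your remark that the argument needs $\z_\k\subset\l$, and so fails in the tensor product setting (cf.\ Proposition~\ref{prop:nu1fortensor}), is accurate.
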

\begin{cor}For arbitrary $(\tau, W)$. The equality $ \mathcal V^{(1)}\cap \mathcal U(\h_0)W =
   \mathcal V^{(1)}\cap \mathcal L_{W,H}^c$ holds if and only if $[\p_\h^+,\p_{\h_0}^-]$ contains
$Ker(\tau)$.
\end{cor}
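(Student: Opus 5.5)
We read the condition in the statement as the inclusion $[\p_\h^+,\p_{\h_0}^-]\subseteq Ker(\tau)$, i.e. $Ker(\tau)$ contains $[\p_\h^+,\p_{\h_0}^-]$. This is what the proof of Proposition~\ref{sub:7} produces. The literal opposite inclusion cannot be intended: for a scalar $\tau$ we have $Ker(\tau)=\k_{ss}\supsetneq[\p_\h^+,\p_{\h_0}^-]$, since $[\p_\h^+,\p_{\h_0}^-]\subseteq\q_\C\cap\k_\C$ is a proper subspace of $\k_{ss}$. That reading would contradict the previous Corollary. Since $Ker(\tau)$ is an ideal of $\k$, the condition is also equivalent to $Ker(\tau)$ containing the ideal generated by $[\p_\h^+,\p_{\h_0}^-]$.

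The plan is to separate the equality into an inclusion and a dimension count. For the inclusion, by \ref{sub:vn} we have $\mathcal V^{(1)}\cap\mathcal U(\h_0)W=\mathrm{lin.span}\{L_x^\tau w: x\in\p_{\h_0}^-,\ w\in W\}$. Also $\mathcal L_{W,H}^c$ is the joint kernel of $L_y^\tau$ for $y\in\p_\h^+$. For $y\in\p_\h^+$, $x\in\p_{\h_0}^-$ and $w\in W$, the computation for $k=1$ in the proof of Proposition~\ref{prop:orderkisnormal} gives
$$L_y^\tau(L_x^\tau w)=L_x^\tau(L_y^\tau w)+\tau([y,x])w=\tau([y,x])w,$$
because $\tau(\p^+)W=0$. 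Hence $\mathcal V^{(1)}\cap\mathcal U(\h_0)W\subseteq\mathcal L_{W,H}^c$ holds if and only if $\tau([y,x])=0$ for all such $y,x$. This is exactly $[\p_\h^+,\p_{\h_0}^-]\subseteq Ker(\tau)$. This already gives the direct implication, since equality forces the inclusion.

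For the converse it remains to upgrade the inclusion to an equality, which we do by comparing dimensions. The space $\mathcal V^{(1)}\cap\mathcal U(\h_0)W$ is the image of $\p_{\h_0}^-\otimes W$ under the injective map $x\otimes w\mapsto[x\otimes w]$ of \ref{eq:1}. Its dimension is therefore $\dim\p_{\h_0}^+\cdot\dim W$.

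For $\mathcal V^{(1)}\cap\mathcal L_{W,H}^c$ we use the $L$-isomorphism $D:\mathcal L_{W,H}^c=\mathcal P(\p_{\h_0}^+,W)\to S(\p_{\h_0}^-)\otimes W\cong\mathcal U(\h_0)W$ of \ref{sub:D1iso}. By the degree argument of \ref{sub:disjoint}, $\z_\k\subset\l$ acts on the degree-$k$ piece by the character $k\beta+\chi_\tau$, and these characters are distinct for distinct $k$. So the $L$-map $D$ respects the grading. It thus identifies $\mathcal V^{(1)}\cap\mathcal L_{W,H}^c$ with the linear polynomials $\p_{\h_0}^-\otimes W$, which have the same dimension. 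This matches the dimension count stated in the proof of Proposition~\ref{sub:7}, and equivalently the duality theorem \cite[Theorem 1]{OV3}.

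With equal dimensions, the inclusion becomes an equality, and the corollary follows. The only point needing care is the grading-compatibility of $D$ in the dimension step. If the central-character argument were insufficient, the fallback is to compute directly: $\mathcal V^{(1)}\cap\mathcal P(\p_{\h_0}^+,W)$ consists of the $W$-valued linear forms on $\p^+$ that vanish on the subspace $\p_\h^+$. Its dimension is visibly $\dim\p_{\h_0}^+\cdot\dim W$.
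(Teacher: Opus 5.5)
Your argument is correct and is essentially the paper's own, which is extracted from the proof of Proposition~\ref{sub:7}. There, the identity $L_y^\tau(L_x^\tau w)=\tau([y,x])w$ for $y\in\p_\h^+$, $x\in\p_{\h_0}^-$ gives $\mathcal V^{(1)}\cap\mathcal U(\h_0)W\subseteq\mathcal L_{W,H}^c$ exactly when $[\p_\h^+,\p_{\h_0}^-]\subseteq Ker(\tau)$, and the inclusion becomes an equality because both spaces have dimension $\dim\p_{\h_0}^+\cdot\dim W$. Reading the condition as $Ker(\tau)\supseteq[\p_\h^+,\p_{\h_0}^-]$ is correct, since that is what this argument proves (the literal wording is a slip), and your direct count of $W$-valued linear forms on $\p^+$ vanishing on $\p_\h^+$ is the simplest justification of the dimension of $\mathcal V^{(1)}\cap\mathcal L_{W,H}^c$.
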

\begin{cor}The equality $\mathcal L_{W,H}^c  =   \mathcal U(\h_0)W  $ is {\em equivalent}
to the equality $\mathcal L_{W,H_0}^c  =   \mathcal U(\h)W  $.
\end{cor}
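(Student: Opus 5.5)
The plan is to derive the corollary from a symmetric characterization of the equality $\mathcal L_{W,H}^c=\mathcal U(\h_0)W$ in terms of $(\g,\h,\h_0)$ and $\tau$. The first step is to check that such a characterization really is symmetric in $\h$ and $\h_0$. Proposition~\ref{prop:allsbarenormal} characterizes the equality by two conditions. The first is the vanishing of the triple bracket $[[\p_\h^+,\p_{\h_0}^-],\p_{\h_0}^-]=\{0\}$. The second is a condition on $\tau$: $\tau$ is one dimensional when $\k_{ss}$ is simple, or $Ker(\tau)$ contains a simple factor of $\k_{ss}$ otherwise. More intrinsically, by Proposition~\ref{prop:orderkisnormal} and Proposition~\ref{sub:7}, the condition on $\tau$ is that the ideal generated by $[\p_\h^+,\p_{\h_0}^-]$ lies in $Ker(\tau)$.

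Next I would interchange the roles of $\h$ and $\h_0$. The pair $(\g,\h_0)$ is again a symmetric pair, via the involution $\sigma\theta$. Its associated subgroup is $H$, and $H_0/L\to G/K$ is again a holomorphic embedding with $\p_\g^+=\p_{\h_0}^+\oplus\p_\h^+$. So Proposition~\ref{prop:allsbarenormal} applies to $(\g,\h_0)$. It says that $\mathcal L_{W,H_0}^c=\mathcal U(\h)W$ if and only if $[[\p_{\h_0}^+,\p_{\h}^-],\p_{\h}^-]=\{0\}$ and the ideal generated by $[\p_{\h_0}^+,\p_\h^-]$ lies in $Ker(\tau)$.

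I would then match the two sets of conditions. For the triple brackets, \ref{sub:triplepairszero} (from \cite[4.6]{Va}) states that $[[\p_{\h_0}^-,\p_\h^+],\p_{\h_0}^-]=\{0\}$ implies $[[\p_\h^-,\p_{\h_0}^+],\p_\h^-]=\{0\}$. The reverse implication comes from the same statement applied to the associated pair, since the list in \ref{sub:triplepairszero} is closed under passing to the associated pair. If needed, I would confirm this case by case from the list. For the $\tau$-condition, note that $[\p_{\h_0}^+,\p_\h^-]=\overline{[\p_\h^+,\p_{\h_0}^-]}$. The ideal of $\k_\C$ generated by a subspace of $\q_\C\cap\k_\C$ is stable under conjugation, because the ideals of $\k_\C$ come from real ideals of $\k$. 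Hence both brackets generate the same ideal of $\k_\C$, and the two $\tau$-conditions coincide. The computations in \ref{sub:b)} and \ref{sub:bracketphh0} give this explicitly: for instance, in case b3) the bracket is $(\q_\C\cap\k_\C)^-\subset\mathfrak{su}(n)_\C$ and its conjugate lies in the same simple ideal.

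The step I expect to be the main obstacle is the symmetry of the triple bracket condition. I would handle it by citing \ref{sub:triplepairszero} for both $(\g,\h)$ and $(\g,\h_0)$, and otherwise by checking the short list directly. An alternative route avoids the triple bracket altogether. The duality isomorphism $\mathcal L_{W,H}^c\cong\mathcal U(\h_0)W$ of \ref{sub:D1iso} gives equal dimensions in each degree $\mathcal V^{(k)}$, so the equality holds if and only if $\mathcal U(\h_0)W\subset\mathcal L_{W,H}^c$. The inclusion $\mathcal U(\h_0)W\subset\mathcal L_{W,H}^c$ means that $L_X^\tau$ annihilates $[Y_1\cdots Y_r\otimes w]$ for all $X\in\p_\h^+$ and $Y_j\in\p_{\h_0}^-$. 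By \eqref{eq:formulapiX}, this holds if and only if $\tau([X,Y])=0$ and $[[X,Y_i],Y_j]=0$. Taking complex conjugates and using the invariance of the Killing form pairing turns these identities into the corresponding identities with $\h$ and $\h_0$ swapped, which is exactly the inclusion criterion for $\mathcal U(\h)W\subset\mathcal L_{W,H_0}^c$.
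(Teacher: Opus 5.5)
Your main argument is essentially the paper's. The paper also proves the corollary by inspecting the characterization (Proposition~\ref{sub:7} for the condition $[\p_\h^+,\p_{\h_0}^-]\subset Ker(\tau)$, and Proposition~\ref{prop:equal} with the list in \ref{sub:triplepairszero} for the triple bracket) and checking that it is invariant under $\h\leftrightarrow\h_0$. For example, the associated algebra of $\mathfrak{su}(k,l)+\mathfrak{su}(m-k,n-l)+\mathfrak u(1)$ is $\mathfrak{su}(k,n-l)+\mathfrak{su}(m-k,l)+\mathfrak u(1)$, which is of the same type. Your intrinsic form of the $\tau$-condition, that the ideal generated by $[\p_\h^+,\p_{\h_0}^-]$ lies in $Ker(\tau)$, is the right one to use. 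In cases b3) and b4) the relevant simple factor of $\k_{ss}$ is a specific one, not an arbitrary one as the wording of Proposition~\ref{prop:allsbarenormal}~b) suggests.

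Your alternative route is genuinely different and avoids the classification. It is correct, but its key step should be made explicit. Conjugation alone only turns $[[\p_\h^+,\p_{\h_0}^-],\p_{\h_0}^-]$ into $[[\p_\h^-,\p_{\h_0}^+],\p_{\h_0}^+]$, which still has $\h_0$ in the repeated slot. The Killing form is what moves the repeated slot: for $X,Z\in\p_\h^+$ and $Y_1,Y_2\in\p_{\h_0}^-$,
\[
B([[X,Y_1],Y_2],Z)=-B(Y_1,[X,[Y_2,Z]]),
\]
where $[[X,Y_1],Y_2]\in\p_\h^-$ and $[X,[Y_2,Z]]\in\p_{\h_0}^+$.

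\begin{itemize}
\item Since $B$ pairs $\p_\h^-$ with $\p_\h^+$ and $\p_{\h_0}^-$ with $\p_{\h_0}^+$ nondegenerately, $[[\p_\h^+,\p_{\h_0}^-],\p_{\h_0}^-]=\{0\}$ holds if and only if $[\p_\h^+,[\p_{\h_0}^-,\p_\h^+]]=\{0\}$.
\item Conjugating the latter gives exactly the swapped condition $[[\p_{\h_0}^+,\p_\h^-],\p_\h^-]=\{0\}$.
\item For the $\tau$-part you need $Ker(\tau)$ to be conjugation-stable. This holds because $\tau$ is unitary: for $X,Y\in\k$, $d\tau(X)$ is skew-Hermitian and $i\,d\tau(Y)$ is Hermitian, so $d\tau(X)+i\,d\tau(Y)=0$ forces both to vanish.
\item Hence $[\p_\h^+,\p_{\h_0}^-]\subset Ker(\tau)$ if and only if $[\p_{\h_0}^+,\p_\h^-]=\overline{[\p_\h^+,\p_{\h_0}^-]}\subset Ker(\tau)$.
\end{itemize}

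Combine this with your degree-wise dimension count. Also use that, by \eqref{eq:formulapiX} and injectivity of the $\p_\g^-$-action, $\mathcal U(\h_0)W\subset\mathcal L_{W,H}^c$ is equivalent to these two conditions. Together these give a uniform proof, and they prove the implication quoted in \ref{sub:triplepairszero} instead of reading it off the list. The paper's route, in return, tells you explicitly for which pairs and which $\tau$ the equality actually holds.
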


 Indeed, a inspection to the statement of the Proposition plus recalling that for $\g=\mathfrak{su}(m,n)$, $\h=\mathfrak{su}(k,l)+\mathfrak{su}(m-k,n-l)+\mathfrak u(1)$   we have $\h_0=\mathfrak{su}(k,n-l)+\mathfrak{su}(m-k,l)+\mathfrak u(1)$
yields the proof.

\smallskip

  \subsubsection{ } We continue to analyze when every symmetry breaking operator is a normal derivative differential operator.  In \cite[Proposition 3]{Va} it is shown for a {\it scalar holomorphic discrete series}

 \phantom{xxxxxxxxx}$\mathcal L_{W,H}^c =\mathcal U(\h_0)W$ if and only if $[[\p_{\h_0}^+, \p_{\h}^-],\p_{\h_0}^+]=\{0\}$.

 In \ref{sub:triplepairszero},  we find the list of symmetric   pairs $(\g,\h)$ that satisfies the condition $[[ \p_{\h_0}^-,\p_\h^+ ],\p_{\h_0}^-]=\overline{[[ \p_{\h_0}^+, \p_\h^- ],
\p_{\h_0}^+]}=\{0\}$.  Next, following the proof of \cite[Proposition 3]{Va}, we verify a necessary and sufficient condition so that after we assume every first order symmetry breaking operator is represented by a normal differential operator, then every symmetry breaking operator is represented by a normal differential operator.
\begin{prop}\label{prop:equal} Assume $(\tau, W)$ is so that the equality $  \mathcal V^{(1)}\cap \mathcal L_{W,H}^c  =
\mathcal V^{(1)}\cap \mathcal U(\h_0)W    $ holds, (see
Proposition~\ref{sub:7}).
  Then, \\
\phantom{xxxxxxxxx}   $\mathcal L_{W,H}^c =\mathcal U(\h_0)W $ if and only if
$[[ \p_{\h_0}^+,\p_\h^- ],\p_{\h_0}^+]=\{0\}$.
\end{prop}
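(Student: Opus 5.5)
We work under the hypothesis $\mathcal V^{(1)}\cap \mathcal L_{W,H}^c=\mathcal V^{(1)}\cap \mathcal U(\h_0)W$. The proof of Proposition~\ref{sub:7} shows this forces $[\p_\h^+,\p_{\h_0}^-]\subseteq Ker(\tau)$, so $\tau([X,Y])=0$ for all $X\in \p_\h^+$, $Y\in \p_{\h_0}^-$. The triple bracket $[[\p_{\h_0}^+,\p_\h^-],\p_{\h_0}^+]$ is the conjugate of $[[\p_{\h_0}^-,\p_\h^+],\p_{\h_0}^-]$, so it vanishes exactly when $[[\p_\h^+,\p_{\h_0}^-],\p_{\h_0}^-]=\{0\}$. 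The tool for both directions is formula \ref{eq:formulapiX} and its analogue \ref{eq:formulapiX2}. In them the first-order summand, containing $\tau([X,Y_j])w$, now vanishes identically. So for $X\in\p_\h^+$ and a monomial $[Y_1^{a_1}\cdots Y_d^{a_d}\otimes w]$ in a basis $\{Y_j\}$ of $\p_{\h_0}^-$, $L_X^\tau$ reduces to the sum of terms $Y^{a-\epsilon_t-\epsilon_s}[[X,Y_t],Y_s]\otimes w$ with binomial or product coefficients.

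For sufficiency, assume the triple bracket vanishes. Then every term in the reduced formula is zero, so $L_X^\tau$ annihilates each element of $L^\tau_{\mathcal U(\p_{\h_0}^-)}W=\mathcal U(\h_0)W$. This is the computation already made in Proposition~\ref{prop:exisnormal}, now with $W_0=W$. By \ref{eq:Lwh} we get $\mathcal U(\h_0)W\subseteq \mathcal L_{W,H}^c$. For equality, compare graded pieces. Both spaces are sums of their intersections with the $\mathcal V^{(k)}$. For $\mathcal U(\h_0)W$ this holds because $D_1$ is graded. For $\mathcal L_{W,H}^c$ it holds because $L_X^\tau$, $X\in\p_\h^+$, lowers the degree by one, so the defining equations split degreewise. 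Through the $L$-isomorphism $D$ of \ref{sub:D1iso}, which preserves degree, the duality theorem (used in Proposition~\ref{prop:everynorfortriple}) gives $\dim \mathcal V^{(k)}\cap\mathcal L_{W,H}^c=\dim \mathcal V^{(k)}\cap\mathcal U(\h_0)W=\dim S^k(\p_{\h_0}^-)\otimes W$. Inclusion together with equal finite dimensions gives equality in each degree, hence $\mathcal L_{W,H}^c=\mathcal U(\h_0)W$.

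For necessity, assume $\mathcal L_{W,H}^c=\mathcal U(\h_0)W$. Take $Y\in\p_{\h_0}^-$ and $0\neq w\in W$. Since $[Y^2\otimes w]\in\mathcal U(\h_0)W\subseteq\mathcal L_{W,H}^c$, formula \ref{eq:formulapiX2} with $a=2$ gives $0=L_X^\tau[Y^2\otimes w]=[[[X,Y],Y]\otimes w]$. By Claim~\ref{prop:triplein}, $[[X,Y],Y]\in\p_\h^-\subset\p_\g^-$. The map $\p_\g^-\otimes W\to\mathcal V^{(1)}$ is injective, so $[[X,Y],Y]=0$ for all $Y\in\p_{\h_0}^-$. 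Polarizing via $Y=Y_1+Y_2$ yields $[[X,Y_1],Y_2]+[[X,Y_2],Y_1]=0$. The Jacobi identity and abelianness of $\p_\g^-$ give $[[X,Y_1],Y_2]-[[X,Y_2],Y_1]=[X,[Y_1,Y_2]]=0$. Together these give $[[X,Y_1],Y_2]=0$. Hence $[[\p_\h^+,\p_{\h_0}^-],\p_{\h_0}^-]=\{0\}$, and by conjugation $[[\p_{\h_0}^+,\p_\h^-],\p_{\h_0}^+]=\{0\}$.

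The step I expect to be the main obstacle is the dimension equality degree by degree in the sufficiency argument. The cleanest fallback avoids duality. Use the $\z_\k$-grading of \ref{sub:disjoint} and the identification $\mathcal L_{W,H}^c=\mathcal P(\p_{\h_0}^+,W)$. The degree-$k$ part is then $\mathcal P^k(\p_{\h_0}^+,W)\cong S^k(\p_{\h_0}^-)\otimes W$, which matches $D_1$ in degree $k$.
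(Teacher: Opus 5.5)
Your proof is correct and follows essentially the same route as the paper. For necessity, you exhibit a degree-two element of $\mathcal U(\h_0)W$ on which some $L_X^\tau$, $X\in\p_\h^+$, acts nontrivially once $[\p_\h^+,\p_{\h_0}^-]\subseteq Ker(\tau)$ has killed the first-order terms; you use $Y^2$ and then polarize, whereas the paper picks $Y_1,Y_2$ with $[[X,Y_1],Y_2]\neq 0$ directly. For sufficiency, your closed formula plays the role of the paper's induction on degree, and the final step from inclusion to equality differs only cosmetically: you use a degreewise dimension count via $\mathcal L_{W,H}^c=\mathcal P(\p_{\h_0}^+,W)$ and $D_1$, while the paper invokes the $L$-isomorphism $\mathcal L_{W,H}^c\cong\mathcal U(\h_0)W$ together with $L$-admissibility; both are valid.
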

\begin{proof}To begin with, under our hypothesis on $(\tau, W)$,  we show:

a)  If \, $[[\p_\h^+,
\mathfrak p_{\mathfrak h_0}^-], \p_{\h_0}^-]\not= \{0\},$ then
$\mathcal L_{W,H}^c     \not= \mathcal U(\mathfrak h_0) (W).$

b) If \, $[[\p_\h^+, \p_{\h_0}], \p_{\h_0}^-]=\{0\},$ then
$\mathcal L_{W,H}^c =\mathcal U(\mathfrak h_0) (W).$

In the course of the proof,  we will recall the details and notation
we are in need.
Since,  \ref{sub:D1iso}, \cite{OV2}\cite{OV3} $\mathcal L_{W.H}^c $ is $L $-isomorphic to $\mathcal U(\h_0)W $
    and both $L$-modules are $L$-admissible, the inclusion   $ \mathcal U(\h_0)W \subseteq  \mathcal L_{W,H}^c  $  implies the equality.  For this proof, we write
$\pi(y):=L_y^\tau, y\in \mathcal U(\g)$

We recall the subspace of $K$-finite vectors in our holomorphic Discrete Series  is equal to $\mathcal U(\g) \otimes_{\mathcal U(\t_\C \oplus \p^+)}   W =L_{(\mathcal U(\p^-)}^\tau(W)$
and is isomorphic to  $S(\p^-)\otimes W$.   $[D\otimes w]$ denotes the equivalence class in
$\mathcal U(\g) \otimes_{\mathcal U(\t_\C \oplus \p^+)}   W $ for $D\otimes w$.  Then
$\mathcal L_{W,H}^c= \{ v \in  \mathcal U(\g) \otimes_{\mathcal U(\t_\C \oplus
\p^+)} W : \pi(Y)v=0 \,\forall Y \, \in \, \p_\h^+ \}$. After we fix a ordered
basis $\{Y_1, \cdots, Y_q\}$, when $\u=\t$ of root vectors, in $\p_\g^+$
so that $\{Y_1, \cdots, Y_p\}$ belongs  to $\p^+ \cap \h_0 =\p_{\h_0}^+$ and
  $\{Y_{p+1}, \cdots, Y_q\}$ belongs to $\p^+ \cap \h =\p_{\h}^+$  and a basis $\{w_1,\cdots,w_c \}$
for $W$, we have that a linear basis over $\C$ for
$\mathcal U(\g) \otimes_{\mathcal U(\t_\C \oplus \p^+)} W $  is
    $\{ [Y_1^{a_1} \dots Y_q^{a_q} \otimes w_i] : i=1\dots c, a_j \in \Z_{\geq 0} \}  $
and a basis for $\mathcal U(\h_0)W $ is
    $\{[ Y_1^{a_1} \dots Y_p^{a_p} \otimes w_i] : i=1\dots c, a_j \in \Z_{\geq 0} \}  $.

To follow we show: \\
when     $[[\p_\h^+,
\mathfrak p_{\mathfrak h_0}^-], \p_{\h_0}^-]\not= \{0\},$ then
$ \mathcal U(\mathfrak h_0) (W) \cap \mathcal V^{(2)}$
is not contained in $\mathcal L_{W,H}^c $. More precisely, we construct a
second order element in $ \mathcal U(\mathfrak h_0) (W)$
and not in $\mathcal L_{W,H}^c $.

We   choose $X_{\h} \in  \p_{\h}^+, \, Y_1, Y_2, \in
  \p_{\h_0}^-$  ,
so that $[[X_{\h}, Y_{1}]  , Y_{2}] $  is nonzero. We fix nonzero $w \in W$, we have:
 \begin{equation*} \begin{split} \pi (X_{\h})[Y_1 Y_2 \otimes w]   &=  [X_{\h}Y_1Y_2 \otimes w]
  \\ & =
 [Y_1X_{\h} Y_2 + [Y_1,X]Y_2 \otimes w ]
\\ & = [ Y_1Y_2X_{\h} \otimes w + Y_1[X_{\h} ,Y_2]\otimes w \\ & \quad \quad +Y_2[Y_1,X_{\h}]\otimes w
+[[Y_1,X_{\h}],Y_2] \otimes w ]
\\ & = [Y_1Y_2\otimes \tau(X_{\h})w + Y_1 \otimes \tau([X_{\h},Y_2])w \\ &
\quad \quad + Y_2\otimes  \tau([Y_1,X_{\h}])w +
[[Y_1,X_{\h}],Y_2] \otimes w] \end{split}
\end{equation*}
  The first summand is zero because $X_{\h} \in \p^+, $
the second and third summand is zero because  our hypothesis is $Ker(\tau)$ contains
$[\p_\h^+ , \p_{\h_0}^-]$ and
  $[X_{\h}, Y_{1}]
\in [\p_\h^+ , \p_{\h_0}^-] $.
Now,  $ [[[X_{\h}, Y_{1}]  , Y_{2}]\otimes w] = \pi ([[X_{\h}, Y_{1}]  ,
Y_{2}]) ([1\otimes w])$
 and
             $[[[X_{\h}, Y_{1}]  , Y_{2}] \in \mathfrak   \p^- $, next, we recall
              the fact that,  for a nonzero $Y \in \p^-$, $\pi_\lambda (Y)$
is injective in $ \mathcal U(\g) \otimes_{\mathcal U(\t_\C \oplus \p^+)} W $, hence,
  $ \pi (X_{\h})[Y_{1}
Y_{2} \otimes w]$ is nonzero and we have shown the claim and we have
concluded the proof of a).

To follow, we show b).       We verify, by induction on $a_1 +\dots +a_p,$
the equality \begin{equation*}\pi_\lambda(X_{\h} ) [Y_{1}^{a_1} \cdots Y_{p}^{a_p}
\otimes w_i]=0 \,\,\text{for\,all}\,\,X_{\h} \in \h \cap \p^+. \end{equation*}
Since, $\pi$ is a  holomorphic representation of lowest $K$-type $(\tau,W)$
we have $\pi(X_{\h})[1\otimes w_i]=0, \forall \,\,X_\h
\in \p^+ $,  whence, the case  $a_1 +\dots +a_p =0$ is consequence of our hypothesis.

In general, for $ a_1= \dots =a_{j-1}=0, a_j \geq 1,
\text{and}\, 1\leq  j\leq p$ we have
\begin{equation*} \begin{split} \pi (X_{\h})[Y_{j}^{a_j}
\dots Y_{p}^{a_p} \otimes w_i] & = [X_{\h}Y_{j}^{a_j}\dots Y_{p}^{a_p} \otimes w_i]  \\ & =[Y_{j}X_{\h}Y_{j}^{a_j -1}Y_{j+1}^{a_{j+1}}
\dots Y_{p}^{a_p} \otimes w_i \\ & \quad \quad  + [X_{\h}, Y_{j}] Y_{j}^{a_j -1}Y_{j+1}^{a_{j+1}}
\dots Y_{\beta_p}^{a_p} \otimes w_i] \\   & =  \pi_\lambda (Y_{j})\pi_\lambda(X_{\h})
[Y_{j}^{a_j -1} Y_{j+1}^{a_{j+1}} \dots Y_{p}^{a_p} \otimes w_i]   \\ &  \quad \quad + [  Y_{j}^{a_j -1}Y_{j+1}^{a_{j+1}}
  \dots Y_{p}^{a_p} [X_{\h}, Y_{j}] \otimes w_i]\\   & =  \pi_\lambda (Y_{j})\pi_\lambda(X_{\h})
[Y_{j}^{a_j -1} Y_{j+1}^{a_{j+1}} \dots Y_{p}^{a_p} \otimes w_i]   \\ &   \quad \quad + [  Y_{j}^{a_j -1}Y_{j+1}^{a_{j+1}}
\dots Y_{\beta_p}^{a_p} \otimes \tau([X_\h,Y_j])w_i]  \\ & =0. \end{split}
\end{equation*}

The first summand is equal to zero owing to the inductive hypothesis,
the second summand is equal to zero owing to our  the hypothesis
states that for $1\leq j \leq p, 1 \leq k \leq p$, $[X_{\h}, Y_{j}]$ commutes with
$Y_{k}$   and
that $[X_{\h}, Y_{j}]$ belongs to $Ker(\tau).$
Thus, we have concluded the proof of b) and thereby shown
$\mathcal U(\h_0)W \subseteq \mathcal L_{W,H}^c$, the comment at the beginning of
this proof concludes the proof of the Proposition. \end{proof}

\subsubsection{Sufficient condition so that the subspace $\mathcal V^{(1)}\cap \mathcal L_{W,H}^c \cap \mathcal U(\h_0)W$ is non trivial}

The following Proposition has as a consequence, the existence of normal derivative symmetry breaking operators, as we will verify in Section~\ref{do}
\begin{prop}\label{prop:nu1nonzero} We assume $\g$ is simple Lie algebra and $(\tau, W)$ is a nonscalar   representation. Then, it holds $\mathcal V^{(1)}\cap \mathcal L_{W,H}^c \cap \mathcal U(\h_0)W) \not= \{0\}$ for the triples  in  Table A below, and in  Table B we list    triples  where it is not the case.
\end{prop}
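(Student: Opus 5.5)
Write an element of $\mathcal V^{(1)}\cap\mathcal U(\h_0)W$ as $v=\sum_j [Y_j\otimes w_j]$, where $\{Y_j\}$ is a basis of root vectors (or, when $\u\neq\t$, weight vectors) of $\p_{\h_0}^-$ and $w_j\in W$. The case $k=1$ computation in the proof of Proposition~\ref{prop:orderkisnormal} gives $L^\tau_X v=[1\otimes\sum_j\tau([X,Y_j])w_j]$. Hence
$$v\in\mathcal L^c_{W,H}\iff \sum_j \tau([X,Y_j])w_j=0 \quad\text{for all } X\in\p_\h^+ .$$
So the whole question reduces to whether the linear map
$$\Phi_\tau:\p_{\h_0}^-\otimes W\to (\p_\h^+)^\star\otimes W,\qquad Y\otimes w\mapsto\bigl(X\mapsto\tau([X,Y])w\bigr)$$
has a nonzero kernel. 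It is an $L$-equivariant contraction map. By the $k=1$ case of the duality theorem (equal dimensions, as in Proposition~\ref{sub:7}), the kernel is $L$-isomorphic to a subspace of $\mathcal L^c_{W,H}\cap\mathcal V^{(1)}$. We analyse it triple by triple.

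\textbf{Table A: producing a kernel vector.} We seek a single pair $Y\in\p_{\h_0}^-$, $0\neq w\in W$ with $\tau([X,Y])w=0$ for all $X\in\p_\h^+$, i.e. $w\in W_Y$ as in \ref{sub:w0irred}\,b). The natural choice is $Y=E_{-\gamma}$ for an extremal root $\gamma\in\Psi_n(\h_0)$, namely the highest one with respect to $\Psi$. Then $[\p_\h^+,E_{-\gamma}]$ is spanned by root vectors $E_{\alpha-\gamma}$ with $\alpha\in\Psi_n(\h)$. If all these roots $\alpha-\gamma$ are negative compact roots, they span a nilpotent subalgebra. Engel's theorem then gives a common kernel containing the $\k$-lowest weight vector of $W$, so $[E_{-\gamma}\otimes w_{low}]\neq0$ lies in the intersection.

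For the triples with vanishing triple bracket this is already Proposition~\ref{prop:exisnormal}, by \ref{sub:bracketphh0}. For the remaining triples in Table A (e.g.\ the associated pairs such as $(\mathfrak{so}(2,2n+1),\mathfrak{so}(2,1)+\mathfrak{so}(2n))$), we check with the explicit root data, as in \ref{sub:bracketphh0} and \ref{sub:b)}, that some $\gamma$ makes $\{\alpha-\gamma:\alpha\in\Psi_n(\h)\}$ consist of negative roots. If no single $Y$ works, we try instead a $\tau$-weighted combination. We take a sum of two terms whose images cancel, for example $[Y_1\otimes\tau(E)w]-[Y_2\otimes\tau(E')w]$ built from a Serre-type relation among the brackets.

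\textbf{Table B: proving the kernel is zero.} Here the triple bracket is nonzero and $[\p_\h^+,\p_{\h_0}^-]=\q_\C\cap\k_\C$ (Resume after \ref{sub:bracketphh0}). Any nonzero kernel is an $L$-submodule, so it contains an $L$-lowest weight vector $v=\sum_j[E_{-\gamma_j}\otimes w_j]$, which we take to be a $\u$-weight vector. We then order the $\gamma_j$ and look at the extreme index $j_0$ with $w_{j_0}\neq0$. We choose $X=E_\alpha\in\p_\h^+$ such that $\alpha-\gamma_{j_0}$ is a positive compact root in $\q_\C\cap\k_\C$ and such that no other $\gamma_j$ contributes to the same $\u$-weight in the equation $\sum_j\tau([X,Y_j])w_j=0$.

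This isolates $\tau(E_{\alpha-\gamma_{j_0}})w_{j_0}=0$ for enough positive roots, together with the $L$-lowest condition. Those conditions force $w_{j_0}$ to be killed by all of $\k^+$ and $\k^-$ root vectors, hence to span a $\k_{ss}$-trivial submodule. That contradicts $\tau$ being nonscalar and irreducible, with $\k_{ss}$ simple, or with the relevant ideal acting nontrivially in the $\mathfrak{su}(m,n)$ cases.

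\textbf{Main obstacle.} The hard step is the Table B separation argument. One must show, for each listed triple, that the weights $\alpha-\gamma_j$ can be separated so that the isolated conditions are strong enough. This is a genuinely case-by-case root computation, especially when $\u\neq\t$ ($\mathfrak{su}(n,n)$ and the odd orthogonal pairs), where distinct roots restrict to the same $\u$-weight. I would start from the rank-one case $(\mathfrak{so}(2,2n+1),\mathfrak{so}(2,2n)+\mathfrak o(1))$, where $\p_{\h_0}^-$ is one-dimensional. There the condition is simply $\tau(\q_\C\cap\k_\C)w=0$, and irreducibility of $\tau$ on $\mathfrak{so}(2n+1)$ gives $w=0$ immediately. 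I would then try to reduce the other Table B triples to this pattern.
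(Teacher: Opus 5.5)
You have the paper's reduction right: $\sum_j[Y_j\otimes w_j]\in\mathcal V^{(1)}\cap\mathcal U(\h_0)W$ lies in $\mathcal L^c_{W,H}$ iff $\sum_j\tau([X,Y_j])w_j=0$ for all $X\in\p_\h^+$. Your Table A mechanism, Engel applied to the operators $\tau(E_{\beta-\gamma})$ with $\beta\in\Psi_n(\h)$, is also the paper's. The paper uses a set $\Sigma\subset\Psi_n(\h_0)$, but every subset of an admissible $\Sigma$ is admissible, so restricting to a single $Y=E_{-\gamma}$ costs nothing.

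\textbf{The gap is in Table B.} You set out to prove that the kernel is zero for every Table B triple, and for the row marked $\natural$ this is false.
\begin{itemize}
\item Take $\g=\mathfrak{sp}(3,\R)$, $\h=\mathfrak{sp}(1,\R)+\mathfrak{sp}(2,\R)$, $\h_0=\mathfrak u(1,2)$. Then $\Psi_n(\h)=\{2\epsilon_1,2\epsilon_2,2\epsilon_3,\epsilon_2+\epsilon_3\}$ and $\Psi_n(\h_0)=\{\epsilon_1+\epsilon_2,\epsilon_1+\epsilon_3\}$.
\item Let $W$ be the $K$-type with weights $\epsilon_i+c(\epsilon_1+\epsilon_2+\epsilon_3)$, $i=1,2,3$. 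This is the standard representation of $\k_\C\cong\mathfrak{gl}(3,\C)$, twisted by a power of the determinant chosen so that $V_\tau^G\neq 0$; it is nonscalar.
\item For $Y=E_{-(\epsilon_1+\epsilon_2)}$, the brackets $[E_\beta,Y]$, $\beta\in\Psi_n(\h)$, are multiples of $E_{\epsilon_1-\epsilon_2}$, $E_{\epsilon_2-\epsilon_1}$, $0$ and $E_{\epsilon_3-\epsilon_1}$.
\item Each of these moves the weight $\epsilon_3+c(\epsilon_1+\epsilon_2+\epsilon_3)$ off the set of weights of $W$. So for $w$ of that weight, $\tau([X,Y])w=0$ for all $X\in\p_\h^+$.
\item Hence $0\neq[Y\otimes w]\in\mathcal V^{(1)}\cap\mathcal L^c_{W,H}\cap\mathcal U(\h_0)W$, even though $\tau$ is nonscalar.
\end{itemize}
Your separation step overlooks that failure of Engel does not make $W_Y$ zero. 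Here Engel does fail: for every $\gamma=\epsilon_a+\epsilon_b$, the set $\{\beta-\gamma\}$ contains $\pm(\epsilon_a-\epsilon_b)$, so the generated algebra is not nilpotent. But a non-nilpotent algebra can still have common null vectors.

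For Table B the paper proves only that no admissible $\Sigma$ exists, i.e. that the Engel construction breaks down. It proves actual vanishing only for the two rows marked \S. For the $\natural$ row it states vanishing merely as something it ``would like to have''. Table B has to be read in that sense.

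\textbf{The \S{} rows.} For the two rows marked \S, your argument is correct and simpler than the paper's. Since $\p_{\h_0}^-=\C Y$ is $L$-stable, $[\p_\h^+,Y]$ is a nonzero $L$-submodule of the $L$-irreducible $\q_\C\cap\k_\C$, hence equal to it. The condition becomes $\tau(\q_\C\cap\k_\C)w=0$. Since $\q_\C\cap\k_\C$ generates $\k_{ss}$, this forces $w=0$ for nonscalar $\tau$. This also settles the $\mathfrak{so}(2,2n)$ row (where $\u\neq\t$) without the paper's branching argument to $\mathfrak{so}(2n-1)$.

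\textbf{Table A.} Your concrete test, choosing $\gamma$ so that all $\alpha-\gamma$ are \emph{negative} for $\Psi$, is too narrow. For $\g=\mathfrak{so}(2,2n)$, $\h=\mathfrak{so}(2,2k)+\mathfrak{so}(2n-2k)$, every $\gamma=\epsilon\pm\delta_j$ with $j>k$ gives differences $\pm\delta_i\mp\delta_j$ of both signs. What is needed is only that the $E_{\alpha-\gamma}$ generate an algebra acting nilpotently on $W$. In this example they do, since all have $\delta_j$-coefficient $\mp1$ and so lie in one positive system of $\Phi(\k,\t)$. In the rows with $\u\neq\t$, the $E_{\alpha-\gamma}$ are not $\t$-root vectors and need the restricted-root computation. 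With this relaxed test your Table A plan coincides with the paper's. Its whole content is the row-by-row verification, which you have not carried out, and the fallback via a ``$\tau$-weighted combination'' is not yet an argument.
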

\begin{proof} This is done by proving it
case by case,
finishing at Lemma 4.13. We follow quite closely many of the main steps in Proposition
\ref{prop:exisnormal}. To begin with, we choose nonzero root vectors   $E_{-\gamma},   \gamma \in \Psi_n(\h_0)$   and, $E_\beta, \beta \in  \Psi_n(\h)$.
Whence,  $\p_{\h }^+=\sum_{\beta \in \Psi_n(\h) } \C E_{ \beta}$, then, \ref{sub:vn} let us write

\noindent
   $\mathcal V^{(1)}\cap \mathcal U(\h_0)W =\{\sum_{\gamma \in \Psi_n(\h_0)}   L_{E_{-\gamma}}^\tau w_{\gamma}= \sum_{\gamma \in \Psi_n(\h_0)}   E_{-\gamma}\otimes w_{\gamma}  :        w_{\gamma} \in W \}$.

For $X\in \p_{\h}^+, Y \in \p_{\h_0}^-, w \in W$,  we have $L_X^\tau ([Y\otimes w])=[XY\otimes w] =\tau([X,Y])w$, whence

$\mathcal L_{W,H}^c \cap \mathcal V^{(1)}\cap  \mathcal U(\h_0)W =\{\sum_{\gamma \in \Psi_n(\h_0)}   L_{E_{-\gamma}}^\tau  ( w_{\gamma})=\sum_{\gamma \in \Psi_n(\h_0)}  [E_{-\gamma} \otimes w ]: \sum_{\gamma \in \Psi_n(\h_0)}   \tau[X, E_{-\gamma}] ( w_{\gamma})=0 \, \forall X\in \p_{\h}^+\}. $

Since, the vectors $\{ E_\beta : \beta \in  \Psi_n(\h)\}$ span $\p_\h^+ $, we obtain\\
\phantom{xxxxxxx} $\sum_{\gamma \in \Psi_n(\h_0)}  E_{-\gamma} \otimes w_\gamma  \in   \mathcal L_{W,H}^c$ if and only if
$\forall \beta \in \Psi_n(\h)$\\ \phantom{xxxx}it holds $ \sum_{\gamma \in \Psi_n(\h_0)}   \tau[E_\beta, E_{-\gamma}] ( w_{\gamma})= \sum_{\gamma \in \Psi_n(\h_0)}   \tau (E_{\beta-\gamma}) ( w_{\gamma})=0.$

Now, in a {\it case by case checking}, using the hypothesis $\g$ is a simple Lie algebra,  see \ref{sub:comp1}, we construct a non empty subset $\Sigma \subset \Psi_n(\h_0)$ so that the set $\{ E_{\beta -\gamma} : \beta \in \Psi_n(\h), \gamma \in \Sigma \}$ spans a non   trivial nilpotent Lie subalgebra of $\q_\C \cap \k_\C$. Since, $\g$ is a simple Lie algebra and $H/L \rightarrow G/K$ is a holomorphic embedding  in Section 2 we find a proof that $\q_\C \cap \k_\C \subset \k_{ss}$. Since $\tau $ is a non scalar representation   algebraic representation, we apply
Engel's Theorem to the nilpotent algebra $\tau(\{ E_{\beta -\gamma} : \beta \in \Psi_n(\h), \gamma \in \Sigma \})$. Thus, we obtain nonzero vector $w_+\in W $ so that $\tau(E_{\beta -\gamma})(w_+)=0 \, \forall \, \beta \in \Psi_n(\h), \gamma \in \Sigma $. Next, we fix $w_\gamma =w_+, for  \, \gamma \in \Sigma$, and, $w_\gamma =0 $ for $\gamma \in \Psi_n(\h_0), \gamma \notin \Sigma$. To continue, we verify $Y:=\sum_{\gamma \in \Psi_n(\h_0)} [E_{-\gamma} \otimes w_\gamma]$ belongs to $\mathcal L_{W,H}^c \cap \mathcal V^{(1)}\cap  \mathcal U(\h_0)W$. For $\beta \in \Psi_n(\h)$, our choice gives $L_{E_\beta}^\tau(Y)= \sum_{\gamma \in \Sigma} \tau( E_{\beta -\gamma})(w_+) =0, $ and we have shown the claim. Thus, the proof of Proposition~\ref{prop:nu1nonzero} is concluded module the verification of Table A and Table B. For this, the necessary computation   is carried out in the following subsections.  \end{proof}

\begin{center}
Table A presents the triples $(\g, \h, \h_0)$ so that there exists a non empty $\Sigma \subset \Psi_n(\h_0)$  so that the subalgebra spanned by $\{E_{\beta -\gamma} : \beta \in \Psi_n(\h), \gamma \in \Sigma\}$ is  nilpotent.
\end{center}
\begin{center}
Table A
\begin{tabular}{|c| c | c|}
\hline   $\g$  &   $\h \, $   &  $\h_0 \,   $     \\
 \hline   $\mathfrak{su}(m,n), k\leq m, l \leq n$   &     $\mathfrak{su}(k,l)+\mathfrak{su}(m-k,n-l)+ \u(1)$ & $\mathfrak{su}(k,n-l)+\mathfrak{su}(m-k,l)+ \u(1)$    \\
  \hline   $\mathfrak{su}(n,n) $   &     $\mathfrak{sp}(n.\R) $ & $\mathfrak{so}^\star(2n) $    \\
   \hline   $\mathfrak{su}(n,n) $   &     $\mathfrak{so}^\star(2n)$ & $\mathfrak{sp}(n.\R) $    \\
\hline   $\mathfrak{so}(2,2n+1)$ $0\leq k<n$ & $\mathfrak{so}(2,2k+1)+ \mathfrak{so}(2n-2k)$ & $\mathfrak{so}(2,2n-2k)+ \mathfrak{so}(2k+1)$       \\
\hline   $\mathfrak{so}(2,2n+1)$ $1\leq k<n$ & $\mathfrak{so}(2,2k)+ \mathfrak{so}(2n-2k+1)$ & $\mathfrak{so}(2,2n+1-2k)+ \mathfrak{so}(2k)$       \\
\hline   $\mathfrak{sp}(n, \mathbb R)$  & $\mathfrak{u}(m,n-m)$ & $\mathfrak{sp}(m, \mathbb R)+ \mathfrak{sp}(n-m, \mathbb R)$       \\
 \hline   $\mathfrak{so}(2,2n)$ $1\leq k \leq n-1$  & $\mathfrak{so}(2,2k)+ \mathfrak{so}(2n-2k)$   & $\mathfrak{so}(2,2n-2k)+ \mathfrak{so}(2k)$      \\
 \hline   $\mathfrak{so}(2,2n)$   & $\mathfrak{u}(1,n) $   & $\mathfrak{u}(1,n) $      \\
\hline   $\mathfrak{so}^\star (2n)$  & $\mathfrak{u}(m,n-m)$ & $\mathfrak{so}^\star(2m)+ \mathfrak{so}^\star(2n-2m)$       \\
\hline   $\mathfrak{so}^\star (2n)$  & $ \mathfrak{so}^\star(2m)+ \mathfrak{so}^\star(2n-2m)$ & $\mathfrak{u}(m,n-m) $       \\
\hline $\e_{6(-14)}$ & $\mathfrak{so}(2,8)+\mathfrak{so}(2) $ & $\mathfrak{so}(2,8)+\mathfrak{so}(2)$ \\
\hline $\e_{6(-14)}$ & $\mathfrak{su}(2,4)+\mathfrak{su}(2) $ & $\mathfrak{su}(2,4)+\mathfrak{su}(2)$ \\
\hline $\e_{6(-14)}$ & $\mathfrak{so}^\star(10)+\mathfrak{so}(2) $ & $\mathfrak{su}(5,1)+\mathfrak{sl}(2, \mathbb R)$ \\
\hline $\e_{6(-14)}$ & $ \mathfrak{su}(5,1)+\mathfrak{sl}(2, \mathbb R) $ & $\mathfrak{so}^\star(10)+\mathfrak{so}(2) $ \\
\hline $\e_{7(-25)}$ $  $   & $\mathfrak{so}^\star(12)+\mathfrak{su}(2) $ & $\mathfrak{su}(6,2)$ \\
\hline $\e_{7(-25)}$ $ $   & $ \mathfrak{su}(6,2) $ & $\mathfrak{so}^\star(12)+\mathfrak{su}(2)  $ \\
\hline $\e_{7(-25)}$ $  $ & $\mathfrak{so}(2,10)+\mathfrak{sl}(2, \mathbb R)   $ & $\e_{6(-14)} + \mathfrak{so}(2)$ \\
\hline $\e_{7(-25)}$ $  $ & $\e_{6(-14)} + \mathfrak{so}(2)    $ & $\mathfrak{so}(2,10)+\mathfrak{sl}(2, \mathbb R) $ \\
\hline   $\mathfrak{su}(n,n)$  & $\mathfrak{so}^\star(2n)$ &$\mathfrak{sp}(n,\mathbb R)$       \\
\hline   $\mathfrak{su}(n,n)$  & $ \mathfrak{sp}(n,\mathbb R)$ &$\mathfrak{so}^\star(2n) $       \\
\hline   $\mathfrak{so}(2,2n)$ $0\leq k\leq n-2$ & $\mathfrak{so}(2,2k+1)+ \mathfrak{so}(2n-2k-1)$   & $\mathfrak{so}(2,2n-2k-1)+ \mathfrak{so}(2k+1)$      \\
\hline
\end{tabular} \\
\end{center}
\begin{center}
The following table present the triples $(\g, \h, \h_0)$, for which, there does NOT  exist  a non empty $\Sigma \subset \Psi_n(\h_0)$  such that the subalgebra spanned by $\{E_{\beta -\gamma} : \beta \in \Psi_n(\h), \gamma \in \Sigma\}$ is  nilpotent.
\end{center}
\begin{center}
Table B
\begin{tabular}{|c| c | c|}
\hline   $\g$  &   $\h \, $   &  $\h_0 \,   $     \\
\hline   $\mathfrak{so}(2,2n+1)$    & $\mathfrak{so}(2,2n)+ \mathfrak{so}(1)$ & $\mathfrak{so}(2,1)+ \mathfrak{so}(2n)\,\,\S$       \\
\hline   $\mathfrak{sp}(n, \mathbb R)$  & $ \mathfrak{sp}(m, \mathbb R)+ \mathfrak{sp}(n-m, \mathbb R)$ & $\mathfrak{u}(m,n-m) \,\,\,  \natural $    \\
\hline   $\mathfrak{so}(2,2n)$  & $\mathfrak{so}(2,2(n-1)+1)+ \mathfrak{so}(1  )$   & $\mathfrak{so}(2,1)+ \mathfrak{so}(2n-1)\,\,\S$      \\
\hline
\end{tabular} \\
The mark $\S$ points that for $k\geq 2$ always happens $\mathcal V^{(k)}\cap  \mathcal U(\h_0)W \cap    \mathcal L_{W,H}^c=\{0\}$ and for  $\tau$ not a scalar representations,          $\mathcal V^{(1)}\cap  \mathcal U(\h_0)W \cap    \mathcal L_{W,H}^c=\{0\}$.

The sign $\natural$ points that for a scalar $\tau$, and $m=1$ \cite{KP2} has shown: $k\geq 2$ forces $\mathcal V^{(k)}\cap  \mathcal U(\h_0)W \cap    \mathcal L_{W,H}^c=\{0\}$. We have not been able to achieve a conclusion for $m : n-2 \geq m>1$.
\end{center}
\subsubsection{Construction of $\Sigma$}\label{sub:comp1}When $[[ \p_{\h_0}^-,\p_\h^+ ],\p_{\h_0}^-]=\{0\}$, we have verified that either $[ \p_{\h_0}^+,\p_\h^- ]=(\q_\C \cap \k_\C)^+$ or $[ \p_{\h_0}^+,\p_\h^- ]=(\q_\C \cap \k_\C)^-$, thus, for this cases, we fix $\Sigma =\Psi_n(\h_0)$, which yields $\{E_{\beta -\gamma} : \beta \in \Psi_n(\h), \gamma \in \Sigma\}$ is contained in a nilpotent Lie algebra.   Thus, for all pairs $(\g,\h)$  satisfying $[[ \p_{\h_0}^-,\p_\h^+ ],\p_{\h_0}^-]=\{0\}$, carrying out a computation as in    subsubsection~\ref{exa:example1}, we obtain the same conclusion as in \ref{exa:example1}  . That is,  for all pairs $(\g,\h)$
and $\tau$ not a scalar representations,      the intersection   $\mathcal V^{(1)}\cap  \mathcal U(\h_0)W \cap    \mathcal L_{W,H}^c$ is nonzero, contains either the $L$-irreducible spanned by the $\k$-highest weight vector or  the $L$-irreducible spanned by the $\k$-lowest weight vector. Applying Proposition~\ref{sub:7} we obtain $\mathcal V^{(1)}\cap  \mathcal U(\h_0)W \cap    \mathcal L_{W,H}^c$ is a  proper subspace of  $\mathcal V^{(1)}\cap\mathcal L_{W,H}^c$.

For the next paragraphs we assume $U=T$.

Another easy case to verify the existence of $\Sigma$ is when $U=T$ and either the maximal root or the noncompact simple root for $\Psi$ belongs to  $\Psi_n(\h_0)$,
then, the choice of  $\Sigma$ equal to the singleton defined by  either  the maximal or the simple root   satisfies the requirement, due that, $\{E_{\beta -\gamma} :\beta \in \Psi_n(\h), \gamma \in \Sigma\}$ is contained in one of the nilpotent Lie algebra's $(\q_\C \cap \k_\C)^\pm$.

When, $[[ \p_{\h_0}^-,\p_\h^+ ],\p_{\h_0}^-]\not=\{0\}$,  $\h=\h^1 \oplus \mathfrak c $, here, $\h^1$ is a noncompact simple Lie algebras, $ \mathfrak c$ is a compact Lie algebra,   $\g $ not equivalent to $\mathfrak{su}(m,n)$, $\h_0=\h_0^1 \oplus \h_0^2 \oplus \mathfrak c $, here, $\h_0^j$ are noncompact simple Lie algebras, $ \mathfrak c$ is a compact Lie algebra. We may write the disjoint union $\Psi_n(\h_0)=\Psi_n(\h_0^1)\cup \Psi_n(\h_0^2)$. We check case by case that either $\Sigma = \Psi_n(\h_0^1)$ or $\Sigma = \Psi_n(\h_0^2)$ satisfies our requirement.

\medskip

\medskip

\smallskip
The following particular example is illustrative.

Notation as in \ref{sub:b)}. $\g=\mathfrak{su}(m,n), 1\leq k <m, 1\leq l <n, \, \text{or}\, 1\leq k <m, n=1, l=0, \, \h= \mathfrak{su}(k,l)+\mathfrak{su}(m-k,n-l) +\u(1), \h_0 =\mathfrak{su}(k,n-l)+\mathfrak{su}(m-k, l)+\mathfrak u(1)$. We have checked $[\p_\h^+, \p_{\mathfrak{su}(k,n-l)}^-]= (\q_\C \cap \k_\C)^+$. Thus, $\Sigma =\Psi_n(\mathfrak{su}(k,n-l))$ is a choice.

 $\g=\mathfrak{so}(2,2n+1), \h=\mathfrak{so}(2,2n-1)+\mathfrak{so}(2), \h_0=\mathfrak{so}(2,2)+\mathfrak{so}(2n-1)$.

$\Psi_c=\{\delta_i \pm \delta_j :  1 \leq i<j\leq n\}\cup \{\delta_1, \cdots, \delta_n \}$, $\Psi_n=\{\epsilon \pm \delta_i :  1 \leq i \leq n \}$. $\Psi_n(\h)=\{\epsilon, \epsilon \pm \delta_i  :  1 \leq i\leq n-1 \}$, $\Psi_n(\h_0)=\{ \epsilon -\delta_n, \epsilon +\delta_n  \}$, $\Psi_c(\l)=\{\delta_i \pm \delta_j :  1 \leq i<j\leq n-1 \}\cup \{\delta_1, \cdots, \delta_{n-1} \}$, $\Psi ((\q_\C \cap \k_\C)^+ )= \{\delta_i \pm \delta_n :  1 \leq i \leq n-1, \delta_n \}$. We have two choices for $\Sigma$, they are:
$\Sigma_1= \{ \epsilon -\delta_n\}$ or $\Sigma_2= \{\epsilon +\delta_n  \}$,  because the set  $\{E_{\beta -\gamma} : \beta \in \Psi_n(\h), \gamma \in \Sigma\}$, for the first election is $\{ E_{\delta_n}, E_{\pm \delta_j +\delta_n} \}$, and, for the second is $\{ E_{-\delta_n}, E_{\pm \delta_j -\delta_n} \}$,  and each of them span an abelian nilpotent Lie algebra. We exchange roles $\h:=\h_0, \h_0:=\h$, then, at least, we have three choices for $\Sigma$, they are: $\{\epsilon \}$, $\{\epsilon +\delta_1\}, $ $\{\epsilon -\delta_1 \} $ as it is easily verified.

$\g=\mathfrak{so}(2,2n+1),  \h=\mathfrak{so}(2,2k+1)+\mathfrak{so}(2n-2k), \h_0=\mathfrak{so}(2,2n-2k)+\mathfrak{so}(2k+1)$.

 $1 \leq k<n$.  $\Psi_n=\{  \epsilon \pm \delta_i :  1 \leq i \leq n \}\cup \{\epsilon \}$. $\Psi_n(\h)=\{ \epsilon \pm \delta_i  :  1 \leq i\leq k \}\cup \{\epsilon \}$, $\Psi_n(\h_0)=\{ \epsilon \pm \delta_j, k+1 \leq j \leq n  \} $. One  choice  for $\Sigma$ is the set
$  \{ \epsilon +\delta_n \}$,  because the set  $\{E_{\beta -\gamma} : \beta \in \Psi_n(\h), \gamma \in \Sigma\}= \{E_{\delta_n \pm \delta_i} : 1\leq i \leq k \}$ spans an abelian  nilpotent Lie algebra, as a direct computation yields.

For $k=0$, $\Psi_n(\h)=\{\epsilon \}  $, $\Psi_n(\h_0)= \{ \epsilon \pm \delta_i  :  1 \leq i\leq n \} $. $\epsilon-(\epsilon \pm \delta_i )=\mp \delta_i$. One choice for $\Sigma=\{ \epsilon -\delta_i, i=1\cdots n\}$ then the subalgebra spanned by $\{E_{\beta -\gamma} : \beta \in \Psi_n(\h), \gamma \in \Sigma\}= \{  E_{\delta_i}, i=1\cdots n\}$ is a nilpotent Lie algebra.

\smallskip

$\g=\mathfrak{so}(2,2n+1),  \h=\mathfrak{so}(2,2k)+\mathfrak{so}(2n+1-2k), \h_0=\mathfrak{so}(2,2n+1-2k)+\mathfrak{so}(2k)$.

 $ 1 \leq k<n$,  $\Psi_n=\{  \epsilon \pm \delta_i :  1 \leq i \leq n \}\cup \{\epsilon \}$. $\Psi_n(\h)=\{ \epsilon \pm \delta_i  :  1 \leq i\leq k \} $, $\Psi_n(\h_0)=\{ \epsilon \pm \delta_j, k+1 \leq j \leq n  \}\cup \{\epsilon \}$. One  choice  for $\Sigma$ is the set
$  \{ \epsilon +\delta_n \}$,  because the set  $\{E_{\beta -\gamma} : \beta \in \Psi_n(\h), \gamma \in \Sigma\}= \{E_{\delta_n \pm \delta_i} : 1\leq i \leq k \}$ spans an abelian  nilpotent Lie algebra, as a direct computation yields.

\subsubsection{}\label{sub:interequalceronosigma}For $k=n$, $\g=\mathfrak{so}(2,2n+1),  \h=\mathfrak{so}(2,2n)+\mathfrak{so}(1), \h_0=\mathfrak{so}(2, 1 )+\mathfrak{so}(2n)$. $\Psi_n(\h)=\{ \epsilon \pm \delta_i  :  1 \leq i\leq n \} $, $\Psi_n(\h_0)= \{\epsilon \}$. $\epsilon \pm \delta_i-\epsilon=\pm \delta_i$, the unique possible choice for $\Sigma$ is $\{\epsilon \}$, for which, the subalgebra spanned by $\{E_{\beta -\gamma} : \beta \in \Psi_n(\h), \gamma \in \Sigma\}=\{ E_{\pm \delta_i} \}$ is not nilpotent. To follow, we verify $\mathcal V^{(1)}\cap \mathcal L_{W,H}^c \cap \mathcal U(\h_0)W=  \mathcal V^{(1)}\cap \mathcal L_{W, \mathfrak{so}(2,2n)+\mathfrak{so}(1) }^c \cap \mathcal U(\mathfrak{so}(2,1)+\mathfrak{so}(2n))W =\{0\}$. In fact $\p_{\h_0}^- \otimes W=\{ E_{-\epsilon} \otimes w, w\in W \}$ and $L_{E_{\epsilon \pm \delta_i}} (E_{-\epsilon}\otimes w)=\tau([E_{\epsilon \pm \delta_i}, E_{-\epsilon}](w)=0, \forall i=1,\dots,n \}$ implies $\tau(E_{\pm \delta_i})(w)=0 \forall i=1,\dots,n$,  hence $\tau(E_{\delta_j \pm \delta_i})(w)=0 \forall i,j$. Thus, $w$ is lowest and highest weight for $(\tau, W)$, since $\tau$ is not a scalar representation, this forces $w=0$, and we have verified $ \mathcal V^{(1)}\cap \mathcal L_{W, \mathfrak{so}(2,2n)+\mathfrak{so}(1) }^c \cap \mathcal U(\mathfrak{so}(2,1)+\mathfrak{so}(2n))W =\{0\}$. Compare with Example \ref{exa:example1}.

\smallskip

$\g=\mathfrak{sp}(n, \R), 1\leq m <n,  \h=\mathfrak u(m,n-m), \h_0=\mathfrak{sp}(m, \R)+\mathfrak{sp}(n-m, \R) $. $\Psi_n(\h)=\{ \epsilon_a +\epsilon_b, 1\leq a \leq m, m+1 \leq b \leq n \}$, $\Psi_n(\h_0)=\{ 2\epsilon_i, \epsilon_i +\epsilon_j, 1\leq i,j \leq \ m  \}\cup \{ 2\epsilon_p, \epsilon_p +\epsilon_q, m+1\leq p,q \leq  n  \} $.

It readily follows: $[\p_{\mathfrak u(m,n-m)}^+, \p_{\mathfrak{sp}(m, \R) }^-]=\sum_{ 1\leq a\leq m, m+1\leq b \leq n} \C E_{-(\epsilon_a -\epsilon_b)}$.

 $[\p_{\mathfrak u(m,n-m)}^+, \p_{\mathfrak{sp}(n-m, \R) }^-]=\sum_{ 1\leq a\leq m, m+1\leq b \leq n} \C E_{(\epsilon_a -\epsilon_b)}$.

Whence, at least, we have two choices for $\Sigma$, either $\{ \epsilon_i +\epsilon_j, 1\leq i,j \leq \ m  \}$ or $\{ \epsilon_p +\epsilon_q, m+1\leq p,q \leq  n  \}$.

  We exchange roles $\g=\mathfrak{sp}(n, \R)$, $\h:=\mathfrak{sp}(m, \R)+\mathfrak{sp}(n-m, \R) $, $\h_0:=\mathfrak u(m,n-m)$.
Next, we verify there is no $\Sigma$ owing to $[\p_\h^+, \p_{\h_0}^-]$ contains $E_{\pm(\epsilon_i - \epsilon_p)}, i\leq m, m+1 \leq p$.
 We would like to have: \begin{equation} \mathcal U( \mathfrak u(m,n-m))W \cap \mathcal L_{W,\mathfrak{sp}(m, \R)+\mathfrak{sp}(n-m, \R)} \cap \mathcal V^{(1)}=\{0\} \end{equation}

  $\g=\mathfrak{sp}(n+1,\R)$, $\h=\mathfrak{sp}(n, \R)+\mathfrak{sp}(1, \R)$ $\h_0=\mathfrak u(n,1)$. $(\tau, W)$ is a scalar representation, then, for $k\geq 2$, we believe,  it holds  \begin{equation}  \mathcal U( \mathfrak u(n,1))W \cap \mathcal L_{W,\mathfrak{sp}(n, \R)+\mathfrak{sp}(1, \R)} \cap \mathcal V^{(k)} =\{0\} \end{equation}

$\g=\mathfrak{so}(2,2n), 1 \leq k<n, \h=\mathfrak{so}(2,2k)+\mathfrak{so}(2n-2k), \h_0=\mathfrak{so}(2,2n-2k)+\mathfrak{so}(2k)$.

$\Psi_c=\{\delta_i \pm \delta_j :  1 \leq i<j\leq n\}$, $\Psi_n=\{\epsilon \pm \delta_i :  1 \leq i \leq n \}$. $\Psi_n(\h)=\{ \epsilon \pm \delta_i  :  1 \leq i\leq k \}$, $\Psi_n(\h_0)=\{ \epsilon \pm \delta_j, k+1 \leq j \leq n  \}$, $\Psi_c(\l)=\{\delta_i \pm \delta_j :  1 \leq i<j\leq k, \text{or}\, k+1 \leq i<j\leq n  \}$, $\Psi ((\q_\C \cap \k_\C)^+ )= \{\delta_i \pm \delta_j :  1 \leq i \leq k, k+1 \leq j \leq n \}$. One  choice  for $\Sigma$ is the set
$  \{ \epsilon -\delta_r, r\geq k+1 \}$,  because the set  $\{E_{\beta -\gamma} : \beta \in \Psi_n(\h), \gamma \in \Sigma\}= \{E_{\delta_r \pm \delta_i} : 1\leq i \leq k, k+1 \leq r \leq n \}$ spans a  nilpotent Lie algebra, as a direct computation yields.

\smallskip

$\g=\mathfrak{so}(2,2n),   \h=\mathfrak{u}(1,n) , \h_0=\mathfrak{u}(1,n) $.

  $\Psi_n=\{\epsilon \pm \delta_i :  1 \leq i \leq n \}$. $\Psi_n(\h)=\{ \epsilon - \delta_i  :  1 \leq i\leq n \}$, $\Psi_n(\h_0)=\{ \epsilon + \delta_j,  1 \leq j \leq n  \}$, 
   One  choice  for $\Sigma$ is the set
$  \{ \epsilon +\delta_n,   \}$,   as a direct computation yields.

$\g=\mathfrak{so}^\star(2n), 1\leq m <n,  \h=\mathfrak u(m,n-m), \h_0=\mathfrak{so}^\star(2m)+\mathfrak{so}^\star(2n-2m)$.
$\Psi_n(\h)=\{ \epsilon_r +\epsilon_s, 1\leq r \leq m, m+1 \leq s \leq n \}$

$\Psi_n(\h_0)=\{ \epsilon_i +\epsilon_j, 1\leq i \not= j \leq \ m  \}\cup \{ \epsilon_p +\epsilon_q, m+1\leq p \not= q \leq  n  \} $.

It readily follows: $[\p_\h^+, \p_{\mathfrak{so}^\star(2m)}^-]=\sum_{-\epsilon_i+\epsilon_b: 1\leq i\leq m, m+1\leq b \leq n} \C E_{-(\epsilon_i -\epsilon_b)}$.

 $[\p_\h^+, \p_{\mathfrak{so}^\star(2n-2m)}^-]=\sum_{\epsilon_a-\epsilon_p: 1\leq a\leq m, m+1\leq p\leq n} \C E_{(\epsilon_a -\epsilon_p)}$.

Whence, at least, we have two choices for $\Sigma$, either $\{ \epsilon_i +\epsilon_j, 1\leq i,j \leq \ m  \}$ or $\{ \epsilon_p +\epsilon_q, m+1\leq p,q \leq  n  \}$.

We exchange roles $\h:=\h_0$, $\h_0:=\h$, to follow, we    choose $\Sigma=\{\epsilon_m +\epsilon_n\}$, then $\Sigma$  yields   $\{E_{\beta - \gamma}: \beta \in \Psi_n(\mathfrak{so}^\star(2m)+\mathfrak{so}^\star(2n-2m)), \gamma \in \Sigma \}$ spans a nilpotent subalgebra.  In fact,    $\epsilon_i +\epsilon_j-(\epsilon_m +\epsilon_n)$ (resp. $ \epsilon_p +\epsilon_q-(\epsilon_m +\epsilon_n)$) is a root if and only if $i=m $ or $j=m$ (resp. $p=n $ or $p=n$), and the difference is $\epsilon_j-\epsilon_n$ (resp. $\epsilon_q-\epsilon_m$). Since, $1\leq j <m, m+1 \leq p <n$, the   root vectors associated  to differences span a commutative nilpotent algebra.

\subsubsection{}\label{sub:e6su24}$\g=\e_{6(-14)}, \h\equiv \mathfrak{su}(2,4) +\mathfrak{su}(2)$. $\h_0 \equiv \h$.
 Next, we pin down the positive roots for $\g, \h, \h_0$. The Vogan diagram for a holomorphic system for $\mathfrak e_{6(-14)}$ has fundamental roots (as in Bourbaki) $\alpha_1, \dots, \alpha_5, \alpha_6, $  the noncompact simple root is $\alpha_1$, the root $\alpha_2$ is  adjacent to both $\alpha_4$ and to the opposite of the maximal root. Maximal root is $\alpha_1 +2\alpha_3 +3 \alpha_4+2 \alpha_5+\alpha_6 +2\alpha_2$. Then $\Phi(\k,\t)= \{\pm \sum_j a_j \alpha_j \in \Phi(\mathfrak e_6,\mathfrak t) :    a_1=0\}$. We set $\breve{\alpha}=\alpha_3+ 2\alpha_4+2\alpha_5+\alpha_6+\alpha_2$. The simple roots for $\Psi \cap \Phi(\h,\t)$ are: (in the order for the Dynkin diagram), $\breve{\alpha}, \alpha_1, \alpha_3, \alpha_4, \alpha_2; \alpha_6$.

 The roots in $\Psi_n(\h,  \t)$ are:

$
\begin{smallmatrix}
    &   & 0 &   &   \\
  1& 1 & 1 & 1 & 0
\end{smallmatrix}
$
$\begin{smallmatrix}
   &  & 1 &   &  \\
   1 & 1 & 1& 1 & 0
 \end{smallmatrix}
 $
$\begin{smallmatrix}
   &  &  0 &   &  \\
   1 & 1 & 1 & 1 & 1
 \end{smallmatrix}
 $
 $\begin{smallmatrix}
   &  &  1 &   &  \\
   1 & 1 & 1 & 1 & 1
 \end{smallmatrix}
 $
 $\begin{smallmatrix}
   &  &  1 &   &  \\
   1 & 1 & 2 & 1 & 0
 \end{smallmatrix}
 $
 $\begin{smallmatrix}
   &  &  1 &   &  \\
   1 & 1& 2 & 1 & 1
 \end{smallmatrix}
 $
 $\begin{smallmatrix}
   &  &  1 &   &  \\
   1 & 2 & 2 & 1& 0
 \end{smallmatrix}
 $
 $\begin{smallmatrix}
   &  & 1&   &  \\
   1 & 1 & 2 & 1 & 1
 \end{smallmatrix}
 $

   $\Psi_n(\h_0 ,\t)$ consists of the eight roots

$
\begin{smallmatrix}
    &   & 0 &   &   \\
  1& 0 & 0 & 0 & 0
\end{smallmatrix}
$
$\begin{smallmatrix}
   &  & 0 &   &  \\
   1 & 1 & 0 & 0 & 0
 \end{smallmatrix}
 $
$\begin{smallmatrix}
   &  &  0 &   &  \\
   1 & 1 & 1 & 0 & 0
 \end{smallmatrix}
 $
 $\begin{smallmatrix}
   &  &  1 &   &  \\
   1 & 1 & 1 & 0 & 0
 \end{smallmatrix}
 $
 $\begin{smallmatrix}
   &  &  1 &   &  \\
   1 & 1 & 2 & 2 & 1
 \end{smallmatrix}
 $
 $\begin{smallmatrix}
   &  &  1 &   &  \\
   1 & 2 & 2 & 2 & 1
 \end{smallmatrix}
 $
 $\begin{smallmatrix}
   &  &  1 &   &  \\
   1 & 2 & 3 & 2 & 1
 \end{smallmatrix}
 $
 $\begin{smallmatrix}
   &  &  2 &   &  \\
   1 & 2 & 3 & 2 & 1
 \end{smallmatrix}
 $

 The maximal root and the noncompact simple root for $\Psi$ belongs to $\Psi_n(\h_0)$. Thus, at least we have two choices for $\Sigma \subset \Psi_n(\h_0)$.

 Next, we exchange roles and define $\h:=\h_0, \h_0:=\h$, then, it readily follows that $\Sigma =\{
\begin{smallmatrix}
    &   & 0 &   &   \\
  1& 1 & 1 & 1 & 0
\end{smallmatrix}\} $ works. That is, $\{E_{\beta -\gamma} : \beta \in \Psi_n(\h), \gamma \in \Sigma\}$ spans an abelian nilpotent Lie algebra. For further use, we point out that $\k/\l \equiv \mathfrak{so}(10)/\mathfrak{so}(4)+\mathfrak{so}(6)$, whence the representation of $L$ in $\k_\C/\l_\C$ is irreducible. Thus, each $(\q_\C \cap \k_\C)^\pm$ is not an $Ad(L)$-invariant subspaces and $[\p_\h^+,\p_{\h_0}^-]= \q_\C \cap \k_\C$.

\smallskip

 $\g=\mathfrak e_{6(-14)},  \h=\mathfrak{so}^\star(10)+\mathfrak {so}(2), $
     $  \h_0 = \mathfrak {su}(5,1) +\mathfrak{sl}_2(\R)  $. Next, we pin down the positive roots for $\g, \h, \h_0$.

  The Vogan diagram for a holomorphic system for $\mathfrak e_{6(-14)}$ has fundamental roots (as in Bourbaki) $\alpha_1, \dots, \alpha_5, \alpha_6, $  the noncompact simple root is $\alpha_1$, the root $\alpha_2$ is  adjacent to both $\alpha_4$ and to the opposite of the maximal root. Maximal root is $\alpha_1 +2\alpha_3 +3 \alpha_4+2 \alpha_5+\alpha_6 +2\alpha_2$.

\noindent
 Then   $\Psi_n(\h ,\t)=\{  \sum_j a_j \alpha_j \in \Phi(\mathfrak e_6,\mathfrak t) :  a_2 \in \{1,2\}, a_1=1\}$, $ \Psi_n(\h_0,  \t)=\{  \sum_j a_j \alpha_j \in \Phi(\mathfrak e_6,\mathfrak t) :  a_2=0, a_1=1 \}\cup \{\alpha_{max} \}$. The choice of $\Sigma=\{\alpha_{max} \}$    yields $\{E_{\beta -\gamma} : \beta \in \Psi_n(\h), \gamma \in \Sigma\}$ spans a nilpotent Lie algebra. We change role, $\h :=\h_0, \h_0 :=\h
 $. Then $\Sigma:=\{ \begin{smallmatrix}
    &   & 1 &   &   \\
  1& 1 & 1 & 0 & 0
\end{smallmatrix}\} $ is a good choice.

\smallskip

\smallskip

 $\g=\mathfrak e_{7(-25)},  \h=\mathfrak{so}^\star(12)+\mathfrak {su}(2), $
     $  \h_0 = \mathfrak {su}(2,6)   $. Next, we pin down the positive simple roots for $\g, \h, \h_0$.

  The Vogan diagram for a holomorphic system for $\mathfrak e_{7(-25)}$ has fundamental roots (as in Bourbaki) $\alpha_1, \dots, \alpha_5, \alpha_6,  \alpha_7$  the noncompact simple root is $\alpha_1$, the root $\alpha_2$ is  adjacent to  $\alpha_5$, the root $\alpha_7$ is  adjacent to the opposite of the maximal root. The maximal root is $\alpha_1 +2\alpha_3 +3 \alpha_4+4 \alpha_5+3\alpha_6 +2\alpha_7 +2\alpha_2$.

The simple roots for  $    \mathfrak {su}(2,6)   $ are: $ \alpha_{max}^{\e_6} := \begin{smallmatrix}
   & &   & 2 &   &   \\
  0&1& 2& 3 & 2 & 1
\end{smallmatrix} $, $\alpha_1$, $\alpha_3$, $\alpha_4$, $\alpha_5$, $\alpha_6$, $\alpha_7$.
Since, the  root $\alpha_1+\alpha_3+\alpha_4+\alpha_5+\alpha_6+\alpha_7 \in \Psi_n(\mathfrak {su}(2,6))$ and $\alpha_{max}^{\e_6} + (\alpha_1+\alpha_3+\alpha_4+\alpha_5+\alpha_6+\alpha_7)=\alpha_{max}^{\e_7}$, we obtain $\alpha_1, \alpha_{max}^{\e_7} \in \Psi_n(\mathfrak {su}(2,6))$.

The simple roots for  $    \mathfrak{so}^\star(12)+\mathfrak {su}(2)$ are:
  $ \begin{smallmatrix}
   & &   & 1 &   &   \\
  1&1& 1 & 1 & 0 & 0
\end{smallmatrix}  $,   $\alpha_3$, $\alpha_4$, $\alpha_5$, $\alpha_6$, $\alpha_7$; $ \begin{smallmatrix}
   & &   & 2 &   &   \\
  0&1& 2 & 3 & 2& 1
\end{smallmatrix} $, the non compact root $ \begin{smallmatrix}
   & &   & 1 &   &   \\
  1&1& 1 & 1 & 0 & 0
\end{smallmatrix}  $ is adjacent to $\alpha_6$ and orthogonal to the other compact simple roots.
\noindent
For the triple $\g=\mathfrak e_{7(-25)},  \h=\mathfrak{so}^\star(12)+\mathfrak {su}(2), $
     $  \h_0 = \mathfrak {su}(2,6)   $ we choose $\Sigma$ to be $\{\alpha_1\}$.

For the triple $\g=\mathfrak e_{7(-25)},  \h=\mathfrak {su}(2,6) , $
     $  \h_0 = \mathfrak{so}^\star(12)+\mathfrak {su}(2)    $ we choose $\Sigma$ to be the singleton $ \{\begin{smallmatrix}
   & &   & 1 &   &   \\
  1&1& 1 & 1 & 0 & 0
\end{smallmatrix} \} $.

  In both cases, the choice of  $\Sigma$   yields $\{E_{\beta -\gamma} : \beta \in \Psi_n(\h), \gamma \in \Sigma\}$ spans a nilpotent Lie algebra.

\smallskip

 $\g=\mathfrak e_{7(-25)},  \h=\mathfrak{so}(2,10)+\mathfrak {sl}_2(\Bbb R), $
     $  \h_0 = \mathfrak {e}_{6(-14)}+\mathfrak{so}(2)   $. Next, we write the positive simple roots for $\g, \h, \h_0$.

  The Vogan diagram for a holomorphic system for $\mathfrak e_{7(-25)}$ has fundamental roots (as in Bourbaki) $\alpha_1, \dots, \alpha_5, \alpha_6,  \alpha_7$  the noncompact simple root is $\alpha_1$, the root $\alpha_2$ is  adjacent to  $\alpha_5$, the root $\alpha_7$ is  adjacent to the opposite of the maximal root. The maximal root is $\alpha_1 +2\alpha_3 +3 \alpha_4+4 \alpha_5+3\alpha_6 +2\alpha_7 +2\alpha_2$.

The simple roots for  $    \mathfrak {so}(2,10) +\mathfrak{sl}_2(\Bbb R)  $ are:  $\alpha_1$, $\alpha_3$, $\alpha_4$, $\alpha_5$, $\alpha_6$, $\alpha_2; \alpha_{max}$.

The simple roots for  $   \h=\mathfrak{e}_{6(-14)}+\mathfrak{so}(2)  $ are:
  $\{ \gamma :=\begin{smallmatrix}
   & &   & 0 &   &   \\
  1&1& 1 & 1 & 1 & 1
\end{smallmatrix}\} $,   $\alpha_3$, $\alpha_4$, $\alpha_5$, $\alpha_6$, $\alpha_7$.


 Then   $\Psi_n(\h ,\t)=\{  \sum_j a_j \alpha_j \in \Phi(\mathfrak e_6,\mathfrak t) :  a_2 \in \{1,2\}, a_1=1\}$, $ \Psi_n(\h_0,  \t)=\{  \sum_j a_j \alpha_j \in \Phi(\mathfrak e_6,\mathfrak t) :  a_2=0, a_1=1 \}\cup \{\alpha_{max} \}$. The choice of $\Sigma=\{\alpha_{max} \}$    yields $\{E_{\beta -\gamma} : \beta \in \Psi_n(\h), \gamma \in \Sigma\}$ spans a nilpotent Lie algebra. We change role, $\h :=\h_0, \h_0 :=\h
 $. Then $\Sigma:=\{ \begin{smallmatrix}
    &   & 0 &   &   \\
  1& 1 & 1 & 1 & 1
\end{smallmatrix}\} $ is a choice.

\smallskip
To finish the proof, from   \cite{Va} \cite{Kob9} \cite{KOadv} we are left to consider the pairs when $U\not= T$. The algebras we have to consider are: $\g= \mathfrak{su}(n,n), \h=\mathfrak{so}^\star(2n), \h_0=\mathfrak{sp}(n,\mathbb R)$ and $\g=\mathfrak{so}(2,2n), \h=\mathfrak{so}(2,2k+1)+\mathfrak{so}(2n-2k-1), \h_0=\mathfrak{so}(2,2n-2k-1)+\mathfrak{so}(2k+1)$.

\smallskip
Notation for $\g= \mathfrak{su}(n,n), \h=\mathfrak{so}^\star(2n), \h_0=\mathfrak{sp}(n,\mathbb R)$ as in \ref{sub:forb1}. A choice for $\Sigma=\{\frac12(\alpha_{n-1}+\alpha_{n+1})+\alpha_{n}\}$. When we change roles, $\h:=\h_0, \h_0:=\h$, $\Sigma=\{\alpha_{n}\}$ works.

\smallskip
Notation as in \ref{prop:triplein}. $\g=\mathfrak{so}(2,2n), 0 \leq k\leq n-1, \h=\mathfrak{so}(2,2k+1)+\mathfrak{so}(2n-2k-1), \h_0=\mathfrak{so}(2,2n-2k-1)+\mathfrak{so}(2k+1)$.   $\Psi_n=\{\epsilon \pm \delta_i :  1 \leq i \leq n \}$. $\u^\star $ is the span of $\{ \epsilon , \delta_i, i=1\dots k \}\cup \{  \delta_i, i \geq  k+2 \}$.  $\Psi_n(\h)=\{ \epsilon \pm \delta_i  :  1 \leq i\leq k \}\cup \{(\epsilon + \delta_{k+1})_{\vert_\mathfrak u}=\epsilon \}$, $\Psi_n(\h_0)=\{ \epsilon \pm \delta_j, k+2 \leq j \leq n  \}\cup \{(\epsilon+ \delta_{k+1})_{\vert_\mathfrak u} \}$, $\Psi_c(\l)=\{\delta_i \pm \delta_j :  1 \leq i<j\leq k, \text{or}\, k+1 \leq i<j\leq n  \}\cup \{(\pm \delta_i + \delta_{k+1})_{\vert_\u}, (\pm \delta_{k+1} + \delta_{a})_{\vert_\u}, i\leq k, a\geq k+2\}$, $\Psi ((\q_\C \cap \k_\C)^+ )= \{\delta_i \pm \delta_j :  1 \leq i \leq k, k+2 \leq j \leq n \}$. For $k\leq n-2$, one  choice  for $\Sigma$ is the set
$  \{ \epsilon +\delta_r, n\geq r\geq k+2 \}$,  owing to the set  $\{E_{\beta -\gamma} : \beta \in \Psi_n(\h), \gamma \in \Sigma\}=\{ E_{-\delta_r}, E_{-\delta_r\pm \delta_i}, 1\leq i \leq k, k+2 \leq r \leq n \} $    spans a nontrivial  nilpotent Lie algebra, as a direct computation yields.
 For $k=n-1$, we have  $\Psi_n(\h_0)=\{(\epsilon+ \delta_{n})_{\vert_\mathfrak u}=\epsilon \}$
The unique choice for $\Sigma =\{\epsilon \}$, yields $\pm \delta_j \in \{ \beta -\gamma : ...\}$. Thus, the subalgebra spanned by  $\{E_{\beta -\epsilon} : \beta \in \Psi_n(\mathfrak{so}(2,2n-1) \}$ contains a copy of $sl_2$. Actually, we   have
\begin{equation}\label{eq:so2n21}\mathcal V^{(1)} \cap \mathcal U(\mathfrak{so}(2,1)+\mathfrak{so}( 2n-1))W \cap \mathcal L_{W, \mathfrak{so}(2,2n-1)+\mathfrak{so}(1)}  =\{0\}. \end{equation}
In fact, let $E_{-(\epsilon +\delta_n)_{\vert_\u}}^{\h_0} \otimes w$ an element of the intersection, we show $w=0$.  As before, this forces

 $\tau ([E_{(\epsilon +\delta_n)_{\vert_\u}}^{\h},E_{-(\epsilon +\delta_n)_{\vert_\u} }^{\h_0}])(w)=0$,

  $\tau ([E_{(\epsilon \pm \delta_i)}^\h ,E_{-(\epsilon +\delta_n)_{\vert_\u}}^{\h_0}])(w)=0, \text{for}\, 1\leq  i\leq n-1$.

  In this case, the outer automorphism $\sigma$  so that $\sigma(\delta_i)=\delta_i, i=1 \dots n-1, \sigma(\delta_n)=-\delta_n, \sigma(\epsilon)=\epsilon$ has an extension to $\mathfrak{so}(2+2n, \C)$ such that $\sigma(E_\alpha)=E_{\sigma(\alpha)}, \, \forall \alpha \in \Psi$, (for $\alpha$ simple it is Serre's Theorem, for any $\alpha$ Freudenthal ....)for a convenient choice of root vectors $E_\alpha$.  Then,
$E_{(\epsilon \pm \delta_i)}^\h=E_{(\epsilon \pm \delta_i)}, i=1\cdots n-1$, $E_{-(\epsilon +\delta_n)_{\vert_\u}}^{\h_0}=E_{-(\epsilon +\delta_n)}-E_{-(\epsilon -\delta_n)}$, $E_{(\epsilon \pm \delta_n)}^\h=E_{(\epsilon \pm \delta_n)}+ E_{(\epsilon \mp \delta_n)}$, and $[E_{(\epsilon + \delta_n)}+ E_{(\epsilon - \delta_n)}, E_{-(\epsilon +\delta_n)}-E_{-(\epsilon -\delta_n)}]= b(E_{(\epsilon + \delta_n)},  E_{-(\epsilon +\delta_n)})H_{(\epsilon + \delta_n)}-b(E_{(\epsilon - \delta_n)},E_{-(\epsilon - \delta_n)})H_{( -2\delta_n)}$,

now, $b(E_{(\epsilon + \delta_n)},  E_{-(\epsilon +\delta_n)})=b(\sigma(E_{(\epsilon + \delta_n)}), \sigma( E_{-(\epsilon +\delta_n)}))= b(E_{(\epsilon - \delta_n)},E_{-(\epsilon - \delta_n)})$,

hence, $ b(E_{(\epsilon + \delta_n)},  E_{-(\epsilon +\delta_n)})H_{(\epsilon + \delta_n)}-b(E_{(\epsilon - \delta_n)},E_{-(\epsilon - \delta_n)})H_{(\epsilon -\delta_n)}$

 $= b(E_{(\epsilon + \delta_n)},  E_{-(\epsilon +\delta_n)})(H_{(\epsilon + \delta_n)}-H_{(\epsilon -\delta_n)})$
 $=  b(E_{(\epsilon + \delta_n)},  E_{-(\epsilon +\delta_n)}) H_{2\delta_n} $, and

$[E_{(\epsilon \pm \delta_n)}^\h, E_{-(\epsilon +\delta_n)_{\vert_\u}}^{\h_0}]= b(E_{(\epsilon + \delta_n)},  E_{-(\epsilon +\delta_n)}) H_{2\delta_n} $

 $[E_{(\epsilon \pm \delta_i)}^\h, E_{-(\epsilon +\delta_n)_{\vert_\u}}^{\h_0}]=[E_{(\pm\delta_i + \epsilon)} , E_{-(\epsilon +\delta_n)}-E_{-(\epsilon -\delta_n)}]= E_{-\delta_n\pm \delta_i}- E_{\pm\delta_i +\delta_n}$.  we recall $b(E_\alpha, E_{-\alpha})\not= 0$.  Thus, we have to show
$w=0$ is the unique solution to  $\tau(E_{\delta_i-\delta_n}- E_{\delta_i +\delta_n)})(w)=0, i=1 \dots n-1$, $\tau(E_{-\delta_i-\delta_n}- E_{-\delta_i +\delta_n)})(w)=0, i=1 \dots n-1$ and
  $  \tau(H_{2\delta_n})(w)=0 $

  \medskip
We observe for $1\leq i\not= j\leq n-1$, that $[E_{\delta_i-\delta_n}- E_{\delta_i +\delta_n)}, E_{-\delta_j-\delta_n}- E_{-\delta_j +\delta_n)}]=-2E_{\delta_i -\delta_j}$ and $[E_{\delta_i-\delta_n}- E_{\delta_i +\delta_n)}, E_{-\delta_i-\delta_n}- E_{-\delta_i +\delta_n)}]=-2H_{\delta_i }$, hence, $\tau(\mathfrak{so}(2n-1))w=0$, thus, $\tau$ restricted to $\mathfrak{so}(2n-1)$ contains the trivial representation, since $\tau$ is a nonscalar representation and the lowest $SO(2)\times SO(2n)$-type of a Discrete Series representation, this is not possible owing to the Harish-Chandra parameter $c_0\epsilon +c_1 \delta_1+\cdots +c_n \delta_n$ is dominant and regular with respect to the system $\Psi_c \cup -\Psi_n$, $\rho_n =-n\epsilon, \rho_c =\sum_{1\leq j \leq n} (n-j) \delta_j$, hence the highest weight for $(\tau,W)$ is $(c_0-n)\epsilon + \sum_{1\leq j \leq n} (c_j-(n-j)) \delta_j $, the regularity and dominance imply $\vert c_0 -n\vert <c_1, c_i>c_{i+1}, i=1, \cdots n-1$, hence $-n+i+c_i>-n+i+1+c_{i+1}$ the Weyl's Theorem to compute the highest weights of the restriction to $\mathfrak{so}(2n-1)$ forces $-n+i+c_i=-n+i+1+c_{i+1}$ a contradiction. Thus, for a  nonscalar irreducible representation, $(\tau,W)$,  it holds

\label{eq:so2n2n-1}$ \mathcal V^{(1)} \cap \mathcal U(\mathfrak{so}(1,2)+\mathfrak{so}( 2n-1))W \cap \mathcal L_{W, \mathfrak{so}(1,2n-1)+\mathfrak{so}(1)}  =\{0\}$.

Therefore, we have completed the construction or non existence of $\Sigma$ and the verification of Table A and Table B.
The following statement gives a neat answer to the structure of the symmetry breaking operators in a very particular case. This case has been considered by \cite[Theorem 5.3]{KP2}.
\begin{lem}\label{lem:sonn-12} We assume $\g\cong \mathfrak{so}(n,2)$, $\h=\mathfrak{so}(n-1,2)$ and $n\geq 3$. Then,

a) For a scalar irreducible representation $(\tau, W)$ for $K$, we have $ \mathcal L_{W,H}\cap \mathcal U(\h_0)W= W+\mathcal V^{(1)} \cap \mathcal L_{W,H}= W+\mathcal V^{(1)} \cap \mathcal U(\h_0)W$.

b) For a non  scalar irreducible representation $(\tau, W)$ for $K$, we have $ \mathcal L_{W,H}\cap \mathcal U(\h_0)W= W$
\end{lem}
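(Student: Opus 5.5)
Here $\h_0=\mathfrak{so}(n-1)+\mathfrak{so}(1,2)$, so $\p_{\h_0}^-$ is one dimensional. Write $\p_{\h_0}^-=\C Y$ with $Y=E_{-\epsilon}$; when $U\neq T$ (that is, $n$ even), take instead $Y=E^{\h_0}_{-(\epsilon+\delta_n)_{\vert_\u}}$ as in \ref{eq:so2n21}. Then $\mathcal U(\h_0)W=\bigoplus_{a\geq 0}[Y^a\otimes W]$, and by the Lemma in \ref{sub:disjoint} the intersection $\mathcal L_{W,H}\cap\mathcal U(\h_0)W$ is graded. So it suffices to determine, for each $a\geq 0$, the subspace of $w\in W$ with $[Y^a\otimes w]\in\mathcal L_{W,H}$, i.e.\ with $L_X^\tau[Y^a\otimes w]=0$ for all $X\in\p_\h^+$.

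For $a=0$ we always get all of $W$. For $a=1$, the computation in the proof of Proposition~\ref{prop:orderkisnormal} gives $L_X^\tau[Y\otimes w]=[1\otimes\tau([X,Y])w]$. The elements $[X,Y]$, $X\in\p_\h^+$, are the root vectors $E_{\pm\delta_i}$ (or the analogous elements computed in \ref{eq:so2n21}). For scalar $\tau$ they lie in $\k_{ss}\subset Ker(\tau)$, so $\mathcal V^{(1)}\cap\mathcal U(\h_0)W\subset\mathcal L_{W,H}$. Proposition~\ref{sub:7} a) then gives the equality $\mathcal V^{(1)}\cap\mathcal L_{W,H}=\mathcal V^{(1)}\cap\mathcal U(\h_0)W$. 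For non-scalar $\tau$, the degree one piece vanishes by \ref{sub:interequalceronosigma} and \ref{eq:so2n2n-1}: $w$ would be killed by a generating set of $\mathfrak{so}(n-1)_\C$, which contradicts the lowest $K$-type computation given there.

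For $a\geq 2$ I will use formula \ref{eq:formulapiX2}:
$$L_X^\tau[Y^a\otimes w]=\Bigl[aY^{a-1}\otimes\tau([X,Y])w+\tbinom a2 Y^{a-2}[[X,Y],Y]\otimes w\Bigr].$$
Here $[[X,Y],Y]\in\p_\h^-$ by Claim~\ref{prop:triplein}. Since $\p_\g^-=\p_\h^-\oplus\C Y$, the monomials $Y^{a-1}$ and $Y^{a-2}Z$ with $Z\in\p_\h^-$ are linearly independent in $S(\p_\g^-)$. Hence vanishing for all $X$ forces $[[X,Y],Y]\otimes w=0$, so either $w=0$ or $[[\p_\h^+,Y],Y]=0$. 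I will then check directly that $[[\p_\h^+,\p_{\h_0}^-],\p_{\h_0}^-]\neq 0$ for this triple. For example, $[[E_{\epsilon+\delta_1},E_{-\epsilon}],E_{-\epsilon}]=[E_{\delta_1},E_{-\epsilon}]$ is a nonzero multiple of $E_{-\epsilon+\delta_1}$, since $\delta_1-\epsilon$ is a root; the $U\neq T$ case is handled similarly using the explicit vectors in \ref{eq:so2n21}. This also matches the pair $(\mathfrak{so}(n,2),\mathfrak{so}(n-1,2))$ being absent from the list \ref{sub:triplepairszero}. Hence $w=0$, so the degree $a$ piece is zero for every $a\geq 2$, for scalar and non-scalar $\tau$ alike.

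Summing over degrees gives a): $\mathcal L_{W,H}\cap\mathcal U(\h_0)W=W+\mathcal V^{(1)}\cap\mathcal U(\h_0)W$, which equals $W+\mathcal V^{(1)}\cap\mathcal L_{W,H}$. It also gives b): the intersection is $W$. The main obstacle I expect is the $U\neq T$ case. There the root vectors are the $\sigma$-symmetrized combinations, and both the nonvanishing of the triple bracket and the independence argument must be carried out with those explicit vectors rather than with single root vectors.
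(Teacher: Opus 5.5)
Your proposal is correct and is essentially the paper's own proof. The paper also writes a general element of $\mathcal U(\h_0)W$ as $\sum_r c_r[Y^r\otimes w_r]$ with $\p_{\h_0}^-=\C Y$, and kills every component of degree $r\geq 2$ using formula \ref{eq:formulapiX2}, Claim~\ref{prop:triplein}, the non-vanishing of $[[\p_\h^+,\p_{\h_0}^-],\p_{\h_0}^-]$ (taken from the list in \ref{sub:triplepairszero}) and linear independence in $S(\p_\g^-)=S(\C Y)\otimes S(\p_\h^-)$. It settles degree one by Proposition~\ref{sub:7} in the scalar case and by \ref{sub:interequalceronosigma}, \ref{eq:so2n2n-1} in the non-scalar case; your degree-by-degree organization is just cleaner bookkeeping of the same computation.

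The one gloss to correct is in the non-scalar degree-one step. What annihilates $w$ is $\tau$ of all of $[\p_\h^+,Y]=\q_\C\cap\k_\C$: for $n$ odd these are the $E_{\pm\delta_i}$, and for $n$ even the combinations in \ref{eq:so2n21} together with the Cartan element found there. Since $\q_\C\cap\k_\C$ generates $\k_{ss,\C}=\mathfrak{so}(n)_\C$, the line $\C w$ is a $\k$-submodule of the irreducible non-scalar $W$, which forces $w=0$. Annihilation by $\mathfrak{so}(n-1)_\C$ alone would give no contradiction: a non-scalar $\tau$ with $\mathfrak{so}(n)$-highest weight $(a,0,\dots,0)$, $a\geq 1$, which does occur as a lowest $K$-type, has an $\mathfrak{so}(n-1)$-fixed vector.
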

Here, $\h_0=\mathfrak{so}(n-1) +\mathfrak{so}(1,2)$
\begin{proof}The hypothesis $n\geq 3$ implies $\g$ is a simple Lie algebra and $\z_\k=\z_\l$.  In \ref{sub:tech} we have  verified $W\subseteq  \mathcal L_{W,H}\cap \mathcal U(\h_0)W$. We set some notation: $\p_{\h_0}^-=\C Y$, $\sum_{0\leq r \leq R} c_r [Y^r \otimes w_r], w_r \in W $ a generic element of $\mathcal U(\h_0)W$. We note $[Y^r \otimes w_r] \in \mathcal V^{(r)}$. In \ref{sub:triplepairszero} we noted $[[\p_\h^+, \p_{\h_0}^-],\p_{\h_0}^-]\not=\{0\}$. Thus, there is  $ X_0\in \p_{\h}^+ $ with $[[X_0,Y],Y] \not=0$.  a) In Proposition~\ref{sub:7} we have shown for a scalar representation  $ \mathcal L_{W,H}\cap \mathcal U(\h_0)W \supseteq W+\mathcal V^{(1)} \cap \mathcal L_{W,H}= W+\mathcal V^{(1)} \cap \mathcal U(\h_0)W$. We are to show the reverse inclusion. The
reverse inclusion  follows from the statement $L_X^\tau (\sum_{0\leq r \leq R} c_r [Y^r \otimes w_0])=0 \, \forall X \in \p_\h^+$ implies $c_2=\cdots =c_R=0$. Here, $W=\C w_0$. In \ref{eq:formulapiX2} we computed  $L_X^\tau (\sum_{0\leq r \leq R} c_r [Y^r \otimes w_0])=  \sum_{1\leq r \leq R} r c_r  [Y^{r-1} \otimes \tau([X,Y])w_0]+ \sum_{2\leq r \leq R} \binom{r}{2} c_r [Y^{r-2}[[X,Y],Y] \otimes w_0]  $. Now $[X_0,Y]\in [\p_\h^+, \p_{\h_0}^ -] =\q_\C \cap \k_\C \subset \k_{ss}$ ($\g$ is simple!) and $\tau$ is a scalar representation, hence the first summand vanishes.   Also,   \ref{prop:triplein} yields   $0\not= [[X_0,Y],Y] \in \p_\h^-$, finally, the direct sum decomposition $\p_\g^-=\p_{\h_0}^- \oplus \p_{\h}^-$ forces the vectors $Y^{r-2}[[X_0,Y],Y] , r\geq 2$ are linearly independent in $S(\p_\g^-)$. Therefore, the equality  $L_X^\tau (\sum_{0\leq r \leq R} c_r [Y^r \otimes w_0])=0 \, \forall X \in \p_\h^+$ implies $c_2=\cdots =c_R=0$.

 b) \, For a non scalar representation in \ref{eq:so2n2n-1} we have verified for   $n=2k$,  $ \mathcal V^{(1)} \cap \mathcal U(\mathfrak{so}(1,2)+\mathfrak{so}( 2k-1))W \cap \mathcal L_{W, \mathfrak{so}(1,2k-1)+\mathfrak{so}(1)}  =\{0\}$, and in \ref{sub:interequalceronosigma} we have checked for $n=2k+1$ , $\mathcal V^{(1)}\cap \mathcal L_{W,H}^c \cap \mathcal U(\h_0)W=  \mathcal V^{(1)}\cap \mathcal L_{W, \mathfrak{so}(2,2k-1)+\mathfrak{so}(1) }^c \cap \mathcal U(\mathfrak{so}(2,1)+\mathfrak{so}(2k-1))W =\{0\}$. We are to show $\mathcal L_{W, \mathfrak{so}(2,2k-1)+\mathfrak{so}(1) }^c \cap \mathcal U(\mathfrak{so}(2,1)+\mathfrak{so}(2k-1))W =W$. Equivalently.\\ $\sum_{0\leq r \leq R} c_r [Y^r \otimes w_r] \in \mathcal L_{W, \mathfrak{so}(2,2k-1)+\mathfrak{so}(1) }^c \cap \mathcal U(\mathfrak{so}(2,1)+\mathfrak{so}(2k-1))W$ yields $c_1=\cdots =c_R=0$. In this case $L_X^\tau (\sum_{0\leq r \leq R} c_r [Y^r \otimes w_r])= \sum_{1\leq r \leq R} r c_r  [Y^{r-1} \otimes \tau([X,Y])w_r]+ \sum_{2\leq r \leq R} \binom{r}{2} c_r [Y^{r-2}[[X,Y],Y] \otimes w_0]  $. Now, in $S(\p_\g^-)$ the set of vectors $\{Y^s, s\geq 1, Y^{t}[[X_0,Y],Y], t\geq 1\}$ is linearly independent owing to $Y\in \p_{\h_0}^- $ and  $[[X_0,Y],Y]\in \p_\h^-$ (see \ref{prop:triplein} and part a)). Thus $c_1=\cdots =c_R =0$.

\end{proof}

  \subsubsection{The equality  $\mathcal U(\h_0)W \cap \mathcal V^{(2)}
   =\mathcal L_{W,H}^c\cap \mathcal V^{(2)}$. } To follow, we present a result analogues
    to the one in  Proposition~\ref{sub:7}.

   \begin{lem} We assume $H/L\rightarrow G/K$ is a holomorphic immersion. Then,
   $\mathcal U(\h_0)W \cap \mathcal V^{(2)}\subseteq \mathcal L_{W,H}^c$, if and only if, $[\p_\h^+, \p_{\h_0}^-] \subseteq Ker(\tau)$ and
$[[\p_{\h_0}^-, \p_\h^+],\p_{\h_0}^- ]$
 is equal to zero.

\end{lem}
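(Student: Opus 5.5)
We will work directly with the degree two formula computed in the proof of Proposition~\ref{prop:orderkisnormal}. For $X\in\p_\h^+$, $Y_1,Y_2\in\p_{\h_0}^-$ and $w\in W$ we have
\[
L_X^\tau[Y_1Y_2\otimes w]=[Y_1\otimes\tau([X,Y_2])w+Y_2\otimes\tau([X,Y_1])w+[[X,Y_1],Y_2]\otimes w].
\]
By \ref{sub:vn}, $\mathcal U(\h_0)W\cap\mathcal V^{(2)}$ is spanned by the classes $[Y_1Y_2\otimes w]$ with $Y_j\in\p_{\h_0}^-$. Recall also that $\mathcal L_{W,H}^c$ consists of the vectors annihilated by all $L_X^\tau$, $X\in\p_\h^+$. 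So the lemma reduces to the following claim: the right hand side vanishes for all $X,Y_1,Y_2,w$ if and only if the two conditions hold.

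The sufficiency direction is immediate. If $[\p_\h^+,\p_{\h_0}^-]\subseteq Ker(\tau)$, the first two summands vanish. If the triple bracket is zero, the third summand vanishes. Hence every generator $[Y_1Y_2\otimes w]$ lies in $\mathcal L_{W,H}^c$.

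For necessity, I use the graded identification $\mathcal V^{(1)}\oplus\mathcal V^{(0)}\cong(\p_\g^-\otimes W)\oplus W$. By Claim~\ref{prop:triplein}, $[[X,Y_1],Y_2]\in\p_\h^-\subset\p_\g^-$, so $[[X,Y_1],Y_2]\otimes w$ lies in $\p_\h^-\otimes W\subset\mathcal V^{(1)}$. The first two summands lie in $\p_{\h_0}^-\otimes W$. Since $\p_\g^-=\p_{\h_0}^-\oplus\p_\h^-$ is a direct sum and $S(\p_\g^-)\otimes W\to\mathcal V$ is a linear isomorphism, vanishing of the total forces two separate equations:
\[
[[X,Y_1],Y_2]\otimes w=0,\qquad Y_1\otimes\tau([X,Y_2])w+Y_2\otimes\tau([X,Y_1])w=0 .
\]
Taking $w\neq0$ in the first equation gives $[[\p_\h^+,\p_{\h_0}^-],\p_{\h_0}^-]=\{0\}$. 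For the second equation, I specialize to $Y_1=Y_2=Y$, which gives $2Y\otimes\tau([X,Y])w=0$ and hence $\tau([X,Y])w=0$ for every $Y\in\p_{\h_0}^-$. This is the degree two instance of \eqref{eq:formulapiX2}. Since $[\p_\h^+,\p_{\h_0}^-]$ is spanned by the brackets $[X,Y]$, we conclude $[\p_\h^+,\p_{\h_0}^-]\subseteq Ker(\tau)$.

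The only delicate point is the claim that the $\p_{\h_0}^-$ and $\p_\h^-$ components separate. This depends on two facts: Claim~\ref{prop:triplein}, which places the triple bracket inside $\p_\h^-$ and not merely inside $\p_\g$, and the injectivity of $D\otimes w\mapsto[D\otimes w]$ from \ref{eq:1}. Both are available earlier in the paper, so no case analysis is needed. The case $U\neq T$ is already covered by Claim~\ref{prop:triplein}.
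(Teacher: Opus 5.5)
Your proof is correct and is essentially the paper's argument. You use the same degree-two commutator formula, the same placement of $[[X,Y_1],Y_2]$ in $\p_\h^-$, and the same separation of components in $S(\p_\g^-)\otimes W$ via $\p_\g^-=\p_{\h_0}^-\oplus\p_\h^-$. The one difference is in your favor: the paper fixes $Y_1,Y_2$ linearly independent, whereas your specialization $Y_1=Y_2$ also covers the case $\dim\p_{\h_0}^-=1$ (e.g.\ $\h_0=\mathfrak{so}(2,1)+\mathfrak{so}(2n)$), where no such pair exists.
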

\begin{proof} As before $[D\otimes w]$ is the equivalence class in
$\mathcal U(\g) \otimes_{\mathcal U(\t_\C \oplus \p^+)}   W $ for $D\otimes w$. We fix
$ X\in \p_\h^+$, $ Y_1 , Y_2$ linear independent elements of $\p_{\h_0}^- $, $w \in W$. We recall
$\mathcal L_{W,H}^c=\{v \in \mathcal U(\g) \otimes_{\mathcal U(\t_\C \oplus \p^+)}   W : L_X^\tau (v)=0
 \forall X \in \p_\h^+  \} $.
  We compute
\begin{equation*} \begin{split} L_X^\tau(Y_1Y_2\otimes w)& =[XY_1Y_2 \otimes w] \\ & = [Y_1XY_2 + [X,Y_1]Y_2 \otimes w ]
\\ & = [ Y_1Y_2X\otimes w + Y_1[X,Y_2]\otimes w \\ & \quad \quad +Y_2[X,Y_1]\otimes w
+[[X,Y_1],Y_2] \otimes w ]
\\ & = [Y_1Y_2\otimes \tau(X)w + Y_1 \otimes \tau([X,Y_2])w \\ & \quad \quad + Y_2\otimes  \tau([X,Y_1])w +
[[X,Y_1],Y_2] \otimes w]=\\ & = [ Y_1 \otimes \tau([X,Y_2])w \\ & \quad \quad + Y_2\otimes  \tau([X,Y_1])w +
[[X,Y_1],Y_2] \otimes w]. \end{split}
\end{equation*}
Here, $[Y_1Y_2\otimes \tau(X)w]=0$ owing to $\tau(\p_\g^+
) W=0$.
For the direct implication, we assume $\mathcal U(\h_0)W \cap \mathcal V^{(2)}
   \subseteq \mathcal L_{W,H}^c\cap \mathcal V^{(2)}$ and we are to verify: $\tau([X,Y_1])= \tau([X,Y_2])=0$ and $ [[Y_1,X],Y_2]=0$.
The left hand side is zero due to our hypothesis.   The triple bracket belongs to $\p_\h^-$.
 Now, the first summand of the last equality
is equal to zero
owing to our hypothesis $\p_\h^+ \subset \p_\g^+ $. $ w \in  W$ is arbitrary.
  Thus,    since $\mathcal U(\p^-) \otimes W$ represents the totality of equivalence  classes in
$\mathcal U(\g) \otimes_{\mathcal U(\t_\C \oplus \p^+)}   W $,  we are left
with a sum in $\p^- \otimes W$, where, if $[[Y_1,X],Y_2]$   were nonzero,
$Y_2,Y_1, [[Y_1,X],Y_2]$ would be  linearly
independent, but, then $w=0$ which contradicts our hypothesis.
 Whence,
for all $X,Y_1,Y_2$ we have $\tau([X,Y_1])= \tau([X,Y_2])=0$ and $ [[Y_1,X],Y_2]=0$.
Thus, the direct implication in the Lemma follows. For the converse statement, the hypothesis implies
$L_X^\tau(Y_1Y_2\otimes w)=0$, whence the inclusion $\mathcal U(\h_0)W \cap \mathcal V^{(2)}
   \subseteq \mathcal L_{W,H}^c\cap \mathcal V^{(2)}$ holds, the properties of the map $D$ \cite{OV3}
    concludes the  verification of the Lemma,

     Here, we are using: $Y_1\otimes w_1 +  Y_2\otimes w_2 + Y_3\otimes w_3=0$ and $Y_1,Y_2,Y_3$ are
linearly independent, then $w_1=w_2=w_3 =0$. \end{proof}
As a corollary we obtain:
\begin{cor}Whenever $\tau$ is one dimensional or as in Proposition~\ref{sub:7} and
 $[[\p_\h^+, \p_{\h_0}^- ], \p_{\h_0}^- ]=\{0\}$, then $\mathcal U(\h_0)W
   =\mathcal L_{W,H}^c $.
\end{cor}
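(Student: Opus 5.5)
The plan is to reduce the corollary to Proposition~\ref{prop:equal}, whose hypothesis is exactly the first-order equality $\mathcal V^{(1)}\cap \mathcal L_{W,H}^c = \mathcal V^{(1)}\cap \mathcal U(\h_0)W$. Under that hypothesis, Proposition~\ref{prop:equal} gives $\mathcal L_{W,H}^c=\mathcal U(\h_0)W$ if and only if $[[\p_{\h_0}^+,\p_\h^-],\p_{\h_0}^+]=\{0\}$. This bracket is the conjugate of $[[\p_{\h_0}^-,\p_\h^+],\p_{\h_0}^-]$, as recorded in \ref{sub:triplepairszero}, and its vanishing is our second hypothesis. So two things remain: verify the first-order equality in each case allowed by the statement, and note that the vanishing of the triple bracket transfers under conjugation.

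For the first-order equality I would split according to the hypothesis on $\tau$.

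If $\tau$ is one dimensional, then $\tau$ kills $\k_{ss}$. By Section 2, $[\p_\h^+,\p_{\h_0}^-]\subset \q_\C\cap\k_\C\subset \k_{ss}$, so $[\p_\h^+,\p_{\h_0}^-]\subset Ker(\tau)$. The converse half of the proof of Proposition~\ref{sub:7}\,a) then applies verbatim. For $y\in\p_\h^+$, $x\in\p_{\h_0}^-$ we have $L^\tau_yL^\tau_x w=\tau([y,x])w=0$, which gives the inclusion $\mathcal V^{(1)}\cap\mathcal U(\h_0)W\subseteq \mathcal V^{(1)}\cap\mathcal L_{W,H}^c$. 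Equality of dimensions, both being $\dim\p_{\h_0}^+\dim W$, turns the inclusion into equality. This argument is valid whether or not $\k_{ss}$ is simple, so the $\mathfrak{su}(m,n)$ case needs no separate treatment here.

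If instead $\tau$ is ``as in Proposition~\ref{sub:7}'', I read this as: $\tau$ is one of the representations for which Proposition~\ref{sub:7} asserts the equality in the relevant case a), b1)--b4). For example, in case b3) this means $\tau=\pi^{\z_\k}\otimes\pi_0^{\mathfrak{su}(m)}\otimes\pi^{\mathfrak{su}(n)}$. In each such case the first-order equality is exactly the conclusion of Proposition~\ref{sub:7}, so nothing new has to be proved.

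With the first-order equality and the vanishing triple bracket in hand, part b) of the proof of Proposition~\ref{prop:equal} gives $\mathcal U(\h_0)W\subseteq\mathcal L_{W,H}^c$. That proof is an induction using that $[X_\h,Y_j]$ commutes with $\p_{\h_0}^-$ and lies in $Ker(\tau)$. The $L$-isomorphism $\mathcal L_{W,H}^c\cong\mathcal U(\h_0)W$ of \ref{sub:D1iso}, together with $L$-admissibility, then upgrades the inclusion to equality. There is no real obstacle here. The only point needing care is the interpretation of ``as in Proposition~\ref{sub:7}'': it must mean that $[\p_\h^+,\p_{\h_0}^-]\subseteq Ker(\tau)$, which is the property actually used in part b) of Proposition~\ref{prop:equal}, and the proof of Proposition~\ref{sub:7} shows that this property is equivalent to the first-order equality.
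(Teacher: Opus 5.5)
Your proposal is correct and follows the paper's own route. The corollary is Proposition~\ref{sub:7} fed into part b) of the proof of Proposition~\ref{prop:equal}: the proof of Proposition~\ref{sub:7} shows the first-order equality amounts to $[\p_\h^+,\p_{\h_0}^-]\subseteq Ker(\tau)$, and the $L$-isomorphism $\mathcal L_{W,H}^c\cong\mathcal U(\h_0)W$ turns the resulting inclusion into equality.

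One small caution about your b3) example. It copies the printed statement of Proposition~\ref{sub:7} b3), but the paper's own computation in \ref{sub:b)} gives $[\p_\h^+,\p_{\h_0}^-]\subseteq\mathfrak{su}(n)_\C$. So the relevant $\tau$ is the one trivial on $\mathfrak{su}(n)$, as in Remark c) after Proposition~\ref{prop:everynorfortriple}. This does not affect your argument, which only uses $[\p_\h^+,\p_{\h_0}^-]\subseteq Ker(\tau)$.
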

 \begin{cor}Whenever $\tau$ is one dimensional or as in Proposition~\ref{sub:7} and
 $[[\p_\h^+, \p_{\h_0}^- ], \p_{\h_0}^- ]\not=\{0\}$, then $\mathcal U(\h_0)W
   \not=\mathcal L_{W,H}^c $.
\end{cor}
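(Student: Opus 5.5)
The plan is to derive the corollary directly from the preceding Lemma on $\mathcal V^{(2)}$, or equivalently from part a) of the proof of Proposition~\ref{prop:equal}. The point is that a nonzero triple bracket already produces, in degree two, an element of $\mathcal U(\h_0)W$ that fails to lie in $\mathcal L_{W,H}^c$.

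First I reduce both alternatives in the hypothesis to the single condition $[\p_\h^+,\p_{\h_0}^-]\subseteq Ker(\tau)$.
\begin{itemize}
\item If $\tau$ is one dimensional, then $\tau$ vanishes on $\k_{ss}=[\k,\k]$. By the decomposition $\q_\C\cap\k_\C=[\p_{\h_0}^+,\p_\h^-]+[\p_{\h_0}^-,\p_\h^+]$ from the proof of Proposition~\ref{sub:7}, we have $[\p_\h^+,\p_{\h_0}^-]\subseteq \q_\C\cap\k_\C$. Since $\g$ is simple, Section~\ref{sec:notation} gives $\q_\C\cap\k_\C\subseteq\k_{ss}$. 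Hence $[\p_\h^+,\p_{\h_0}^-]\subseteq Ker(\tau)$.
\item If $\tau$ is as in Proposition~\ref{sub:7}, that is, $\mathcal V^{(1)}\cap\mathcal L_{W,H}^c=\mathcal V^{(1)}\cap\mathcal U(\h_0)W$, then the proof of that Proposition gives the same inclusion $[\p_\h^+,\p_{\h_0}^-]\subseteq Ker(\tau)$.
\end{itemize}

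Next I use the degree-two computation from the proof of the Lemma. Suppose $[[\p_\h^+,\p_{\h_0}^-],\p_{\h_0}^-]\neq\{0\}$, and choose $X\in\p_\h^+$ and $Y_1,Y_2\in\p_{\h_0}^-$ with $[[X,Y_1],Y_2]\neq0$. For any $w\neq0$ in $W$,
\[
L_X^\tau[Y_1Y_2\otimes w]=[Y_1\otimes\tau([X,Y_2])w+Y_2\otimes\tau([X,Y_1])w+[[X,Y_1],Y_2]\otimes w].
\]
By the first step, the first two terms vanish. For the third term:
\begin{itemize}
\item Claim~\ref{prop:triplein} gives $[[X,Y_1],Y_2]\in\p_\h^-\subseteq\p_\g^-$.
\item Therefore the third term equals $L^\tau_{[[X,Y_1],Y_2]}[1\otimes w]$.
\item This is nonzero, because under the identification with $S(\p_\g^-)\otimes W$ the action of a nonzero element of $\p_\g^-$ is injective.
\end{itemize}
Hence $[Y_1Y_2\otimes w]\in\mathcal U(\h_0)W\cap\mathcal V^{(2)}$ does not lie in $\mathcal L_{W,H}^c$. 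In the language of the Lemma, $\mathcal U(\h_0)W\cap\mathcal V^{(2)}\not\subseteq\mathcal L_{W,H}^c$. It follows that $\mathcal U(\h_0)W\neq\mathcal L_{W,H}^c$, since equality would force the inclusion in every degree.

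The only step needing care is the reduction in the one-dimensional case: checking that $[\p_\h^+,\p_{\h_0}^-]$ lies in $\k_{ss}$ rather than meeting $\z_\k$. This follows from $\z_\k\subseteq\l$ together with $[\p_\h^+,\p_{\h_0}^-]\subseteq\q_\C$, since $\sigma$ acts by $-1$ there. Everything else is a direct citation of the Lemma.
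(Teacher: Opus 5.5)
Your proposal is correct and is essentially the paper's argument. The corollary is read off from the degree-two Lemma (equivalently from part a) of the proof of Proposition~\ref{prop:equal}) by exhibiting $[Y_1Y_2\otimes w]\in\mathcal U(\h_0)W\cap\mathcal V^{(2)}$ outside $\mathcal L_{W,H}^c$. Your first step is dispensable, though: in $\p_\g^-\otimes W=(\p_{\h_0}^-\otimes W)\oplus(\p_\h^-\otimes W)$ the term $[[X,Y_1],Y_2]\otimes w$ lies in the second summand while the other two lie in the first, so $L_X^\tau[Y_1Y_2\otimes w]\neq 0$ for $w\neq 0$ with no hypothesis on $\tau$, which is how the Lemma's direct implication runs.
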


\subsubsection{Two  examples of $\mathcal V^{(n)}\cap \mathcal L_{W,H}^c \cap \mathcal U(\h_0)W=0 $ for all $n\geq 2 $}\label{:sub:twoexam} For this two examples we consider the triples a) $\g=\mathfrak{so}(2,2n)$, $\h=\mathfrak{so}(2,2n-1)+\mathfrak{so}(1)$, $\h_0= \mathfrak{so}(2,1)+\mathfrak{so}( 2n-1)$ and b)  $\g=\mathfrak{so}(2,2n+1)$, $\h=\mathfrak{so}(2,2n)+\mathfrak{so}(1)$, $\h_0=\mathfrak{so}(2,1)+\mathfrak{so}(2n)$. To follow, we verify:

For a nonscalar irreducible representation $(\tau, W)$, then,  for $n\geq 1$,   $\mathcal V^{(n)}\cap \mathcal L_{W,H}^c \cap \mathcal U(\h_0)W= \{0\}$.

For a scalar representation $(\tau,W)$, we have $\mathcal V^{(1)}\cap \mathcal L_{W,H}^c = \mathcal V^{(1)}\cap \mathcal U(\h_0)W$ and for $n\geq 2$,   $\mathcal V^{(n)}\cap \mathcal L_{W,H}^c \cap \mathcal U(\h_0)W= \{0\}$.

 The case $n=1$ has been computed in the proof of Proposition~\ref{prop:nu1nonzero}, Proposition~\ref{sub:7}, whence, from now on, $n\geq 2$. In both cases $\h_0 \equiv \mathfrak{su}(1,1)$ plus a compact Lie algebra. In both cases, we fix a linear basis $E,Y,Z$ of the complexification of $\h_0$ so that  $ E \in \p_{\h_0}^+, Y \in \p_{\h_0}^-, Z \in \mathfrak u \cap \h_0$. Then, in both cases $\mathcal V^{(n)}  \cap \mathcal U(\h_0)W=\{ [Y^n \otimes w] : w \in W \}$. We recall the formula \ref{eq:formulapiX}  for the action of $X \in \p_{\h}^+$ on $[Y^n \otimes w]$, namely, $L_X^\tau ([Y^n \otimes w])=[n Y^{n-1} \otimes \tau([X,Y])w +\binom{n}{2}Y^{n-2} [[X,Y],Y]\otimes w]$. Now, for the two triples $(\g,\h,\h_0)$ we have $[\p_\h^+, \p_{\h_0}^-], \p_{\h_0}^-] \not=\{0 \}$. Thus, we may choose $X \in \p_\h^+$ such that $[[X,Y],Y]\not= 0$. Then, the vectors $Y^{n-1} \in S^{n-1}(p_{\h_0}^-) \subset S^{n-1}(p_{\g}^-)   ,  Y^{n-2}[[X,Y],Y] \in S^{n-1}(p_{\h_0}^- \oplus p_{\h}^- ) \subset S^{n-1}(p_{\g}^-) $  are linearly independent (we recall $[[X,Y],Y] \in p_{\h}^-$), hence,  $L_X^\tau ([Y^n \otimes w])=0$ if and only if $w=0$.  Whence, for either a scalar or non scalar representation $(\tau,W)$, we obtain   $\mathcal V^{(n)}\cap \mathcal L_{W,H}^c \cap \mathcal U(\h_0)W=\{0\}$.
 \subsubsection{} A list of the split rank  one  pairs is in \cite[Table 2]{KP2}.
From the   list \ref{sub:triplepairszero}  and for $\g$ simple, we extract that the split rank
one symmetric pairs     satisfying the condition
$[[ \p_{\h_0}^+,\p_\h^- ],\p_{\h_0}^+]=\{0\}$, are:
$(\mathfrak{su}(m,n), \mathfrak s(\mathfrak{u} (m,n-1) + \mathfrak u(1)))$,
$(\mathfrak{so}(2m,2), \mathfrak u(m,1))$,    $(\mathfrak{so}^\star(2n),
\mathfrak{so}(2) +\mathfrak{so}^\star(2n-2)) $.
Whence, a consequence of Proposition~\ref{prop:equal}
is the conclusion of \cite[Theorem 5.3]{KP2} for scalar holomorphic Discrete
 Series representations.    That is,   for the three pairs quoted above, every symmetry breaking operator
is represented by a normal derivative differential operator.

\subsubsection{{\bf Tensor product}} \label{sub:tensorpro}As usual, we consider $G_0,K_0$ so that $G_0/K_0$ is
equivalent to a bounded symmetric domain. Henceforth $G_0$ is a simple Lie group. We set $G := G_0\times G_0, K:=K_0\times K_0$,
   $\theta =\theta_0 \times \theta_0$, $\p =\p_0 \times \p_0$. Let $H:=\{(x,x): x\in G_0 \}$. Then, $H$ is a subgroup of $G$, invariant under the Cartan involution of $G$. Actually, $H$ is the fix point subgroup for the involution $\sigma(x,y)=(y,x)$. A maximal compact subgroup $L$ of $H$ is $L=K\cap H=\{(x,x): x\in K_0 \}$. $\q=\{(x,-x): x\in \g_0 \}$, $\q_\C \cap \k_\C=\{(x,-x): x\in (\k_0)_\C \}$.  The associated pair  to $(G,H)$ is $(G, H_0)$ with $H_0$ the image of the immersion of $G_0$ in $G$ via the map $x \rightarrow (x, \theta_0 x).$   Thus, the pair $(H_0, L)$ is isomorphic to the symmetric pair $(G_0, K_0).$  $\p_{\h_0}=\h_0 \cap \p=\{ (X,-X): X \in \p_0 \}$. Let $T_0 \subset K_0$ be a maximal torus in $K_0$.  A compact Cartan for $G$ is $T:=T_0 \times T_0$.   We denote the holomorphic system of positive roots    for $\Phi(\g_0, \t_0) $ by $\Psi_{\g_0}$. Hence $\Psi := \Psi_{\g_0} \times \{0\} \cup \{0\} \times \Psi_{\g_0}$ is a holomorphic system of positive roots for $\Phi(\g, \t) $. A maximal torus in $L$ is $U=H\cap T= \{(x,x): x\in T_0 \}$. $\Psi_\h =\{(\alpha, 0): \alpha \in \Psi_{\g_0} \}$. $\Psi_{\h_0} =\{(\alpha, 0): \alpha \in \Psi_{\g_0} \}$. Respective, root vectors are:
$Y_{(\alpha, 0)}=(Y_\alpha, Y_\alpha)$, $Y_{(\alpha, 0)}^0=(Y_\alpha, -Y_{\alpha})$.

  We also fix $(L_\cdot^\eta,  V_\eta), (L_\cdot^\nu, V_\nu)$ holomorphic Discrete Series representations for $G_0$.
   $V_{\eta,\nu}:= V_\eta \otimes V_\nu $. Then, $V_{\eta,\nu}$ is a unitary irreducible holomorphic Discrete Series representation for $G$.
 Let $(\tau, W)$ denote the lowest $K$-type for $V_{\eta,\nu}$. Hence, $(\tau, W)$ is equivalent to $(L_\cdot^{\eta \otimes \nu}, V_{\eta,\nu}^{\p_\g^+})$ Under the above setting, a simple fact is,
\begin{prop} \label{prop:nu1fortensor} The equality
 $\mathcal V^{(1)} \cap \mathcal U(\h_0)W =\mathcal V^{(1)} \cap \mathcal L_{W,H}^c$ holds  if and only if $\tau$ is a one dimensional representation and $Ker(\tau)$ contains the subalgebra $(\z_\k)_\C  \cap \q_\C$.
\end{prop}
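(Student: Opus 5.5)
The plan is to follow the proof of Proposition~\ref{sub:7} almost verbatim. The first step is the first-order computation $L_X^\tau[Y\otimes w]=[1\otimes\tau([X,Y])w]$ for $X\in\p_\h^+$, $Y\in\p_{\h_0}^-$. Both $\mathcal V^{(1)}\cap\mathcal U(\h_0)W$ and $\mathcal V^{(1)}\cap\mathcal L_{W,H}^c$ are $L$-isomorphic via the map $D$ of \ref{sub:D1iso}, so they have equal dimension $\dim\p_{\h_0}^+\dim W$. Hence the equality holds if and only if $\mathcal V^{(1)}\cap\mathcal U(\h_0)W\subseteq\mathcal L_{W,H}^c$, and by that computation this happens if and only if $[\p_\h^+,\p_{\h_0}^-]\subseteq Ker(\tau)$. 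So everything reduces to identifying $[\p_\h^+,\p_{\h_0}^-]$, and the ideal of $\k_\C$ it generates, in the tensor product setting.

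The second step computes this bracket explicitly. Here $\p_\h^+=\{(X,X):X\in\p_0^+\}$ and $\p_{\h_0}^-=\{(Y,-Y):Y\in\p_0^-\}$, so
\[ [(X,X),(Y,-Y)]=([X,Y],-[X,Y]). \]
Thus $[\p_\h^+,\p_{\h_0}^-]=\{(Z,-Z):Z\in[\p_0^+,\p_0^-]\}$. Since $G_0$ is simple, $[\p_0^+,\p_0^-]=(\k_0)_\C$, using $\k_0=[\p_0,\p_0]$ and the abelianness of $\p_0^\pm$, exactly as in the proof of Proposition~\ref{sub:7}. Hence $[\p_\h^+,\p_{\h_0}^-]=\q_\C\cap\k_\C$, which contains $(\z_\k)_\C\cap\q_\C=\{(z,-z):z\in\z_{\k_0}\}$.

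The third step is the forward direction. If $\q_\C\cap\k_\C\subseteq Ker(\tau)$, then $Ker(\tau)$ contains the ideal of $\k_\C$ generated by $\q_\C\cap\k_\C$. That ideal contains $[(Z,-Z),(Z',0)]=([Z,Z'],0)$, so it contains $(\k_{0,ss})_\C\times\{0\}$, and symmetrically $\{0\}\times(\k_{0,ss})_\C$. Therefore $\tau$ kills $\k_{ss}$ and is one dimensional. It also kills $(\z_\k)_\C\cap\q_\C$ by inclusion. For the converse, a one dimensional $\tau$ vanishes on $\k_{ss}$. Writing $(Z,-Z)$ with $Z=z+s$, $z\in\z_{\k_0}$, $s\in\k_{0,ss}$, the element splits as $(z,-z)+(s,-s)$. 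The first summand lies in $(\z_\k)_\C\cap\q_\C\subseteq Ker(\tau)$ and the second lies in $\k_{ss}$, so $\tau$ vanishes on $\q_\C\cap\k_\C$. The equality then follows from the first step.

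The only delicate point I expect is the first step. I need to confirm that the dimension count and the $L$-isomorphism $D$ remain valid when $\g$ is not simple. The grading lemma in \ref{sub:disjoint} used $\z_\k\subset\l$, which now fails, but I only need the degree-one pieces. There both spaces are visibly isomorphic to $\p_{\h_0}^-\otimes W$: for $\mathcal L_{W,H}^c=\mathcal P(\p_{\h_0}^+,W)$ this is linear polynomials, and for $\mathcal U(\h_0)W$ it comes from $D_1$. So that part carries over directly.
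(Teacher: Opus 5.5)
Your proposal is correct and follows the paper's proof essentially step for step. You identify $[\p_\h^+,\p_{\h_0}^-]=\q_\C\cap\k_\C$ via $(\k_0)_\C=[\p_0^+,\p_0^-]$, you use that the ideal this subspace generates contains $\k_{ss}$ to force $\tau$ to be one dimensional, and you reduce the equality to $[\p_\h^+,\p_{\h_0}^-]\subseteq Ker(\tau)$ through the first-order computation and dimension count of Proposition~\ref{sub:7}. Your explicit splitting $(Z,-Z)=(z,-z)+(s,-s)$ in the converse fills in what the paper dismisses as ``repeats the proof of Proposition~\ref{sub:7}''; that earlier argument relied on $[y,x]\in\k_{ss}$, which no longer holds in the tensor product setting.
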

 \begin{proof}The  proof closely  follows the proof of Proposition~\ref{sub:7}. To begin with, we verify $[\p_\h^+, \p_{\h_0}^-]=\q_\C \cap \k_\C$. \\ For this,  we fix generators $(Y_j, Y_j),  (R_j, -R_j) \,  j=1,\dots $ for $\p_{\h}^+ (resp. \,\p_{\h_0}^-) $. Then, $[\p_\h^+, \p_{\h_0}^-] =linspan_\C \{ ([Y_j, R_s], -[Y_j, R_s]) \} $ and $Y_j \in  \p_{\g_0}^+, R_j \in \p_{\g_0}^-$.

  Now, $\q_\C \cap \k_\C =\{ (X,-X): X\in (\k_0)_\C \}$. Obviously, $[\p_\h^+, \p_{\h_0}^-]\subset \q_\C \cap \k_\C$. For the other inclusion, we recall the equality $(\k_0)_\C =[(\p_{\g_0})_\C, (\p_{\g_0})_\C]=[\p_{\g_0}^+, \p_{\g_0}^-] +[\p_{\g_0}^-, \p_{\g_0}^+]= [\p_{\g_0}^+, \p_{\g_0}^-] $.
Thus, for $X\in (\k_0)_\C $,  we have

  $(X,-X)=(\sum_{j,r} c_{j,s} [Y_j,R_s], - \sum_{j,r}  c_{j,s} [Y_j,R_s] )\in [\p_\h^+, \p_{\h_0}^-] $.

  In particular, $(\z_\k)_\C  \cap \q_\C \subseteq   [\p_\h^+, \p_{\h_0}^-]$ (for $\g_0=\mathfrak{su}(1,1)$ they are equal).  We notice,  the ideal in $\k_\C$ spanned by $\q_\C \cap \k_\C$ is equal to $(\k_{ss})_\C +(\z_\k)_\C  \cap \q_\C $. Next, as in the proof of Proposition~\ref{sub:7} if the equality $\mathcal V^{(1)} \cap \mathcal U(\h_0)W =\mathcal V^{(1)} \cap \mathcal L_{W,H}^c$ holds, then $Ker(\tau)$ contains  $[\p_\h^+, \p_{\h_0}^-] $, hence $Ker(\tau)$  contains $\k_{ss}$ and $(\z_\k)_\C  \cap \q_\C$, the irreducibility of $\tau$ forces $\tau$  is a one dimensional representation. The converse statement repeats the proof of Proposition~\ref{sub:7}.
\end{proof}

\begin{examp} For $G_0=SU(1,1)$, an explicit calculation yields   $\mathcal V^{(1)} \cap \mathcal U(\h_0)W
=\mathcal V^{(1)} \cap \mathcal L_{W,H}^c$  if and only if $\tau =\tau_1 \otimes \tau_1$ for $\tau_1 \in \widehat{K_0}$, and,     for $n\geq 2$,  $\mathcal V^{(n)} \cap \mathcal U(\h_0)W
$ is disjoint to $\mathcal V^{(n)} \cap \mathcal L_{W,H}^c$.
\end{examp}
 \begin{rmk} \label{rmk:tensor2}a) For every $G_0$ and $G=G_0\times G_0$ as above,  we always have $[[\p_{\h_0}^+, \p_{\h}^-],\p_{\h_0}^+]\not=\{0\}$ owing to
  $[[(Y_{\beta}, -Y_{\beta}), (Y_{-\beta}, Y_{-\beta})], (Y_{\beta}, -Y_{\beta})]\not= 0  \, \, \text{for \,every \, root } \beta \in \Psi_{\g_0}^n.$ Thus, even in the scalar case, Proposition~\ref{prop:equal} shows that the equality  $\mathcal L_{W,H}^c = \mathcal U(\h_0)W  $ does not hold.

  b) Under the setting in this subsubsection, the duality Theorem becomes \begin{equation*} Hom_H( V_\phi, V_{\eta,\nu})\equiv Hom_{K_0}(Z_\phi, \oplus_{1\leq j \leq S} \, S(\p_{\h_0}^-)\otimes_{\C} Z_j). \end{equation*}
 \end{rmk}Here, $Z_\phi, Z_\eta,  Z_\nu$  denotes the lowest $K_0$-type of $V_\phi, V_\eta,V_\nu$ and $Z_j$ are the irreducible  factors of tensor product  $Z_\eta \otimes Z_\nu$ restricted to the diagonal subgroup determinate  by $\k_0$. That is,  $res_{K_0}(Z_\eta \otimes Z_\nu) =\oplus_j Z_j$

c) The technique developed in \ref{sub:w0irred}b) to produce first order normal derivative symmetry breaking operators does not work for this case, owing to,   for each $Y \in \p_{\h_0}^- $, the subspace  $\{[X,Y], X \in \p_\h^+ \}$ contains semisimple elements, for example, for $Y=(Y_{-\beta}, -Y_{-\beta})$ the semisimple element $([Y_\beta, Y_{-\beta}], [Y_\beta, -Y_{-\beta}])$ belongs to, hence, in principle,the subspace  $\{[X,Y], X \in \p_\h^+ \}$ spans a solvable Lie algebra.

   \section{Acknowledgements}
Part of the research in this paper was carried out within the online research community on Representation Theory and Noncommutative Geometry sponsored by the American Institute of Mathematics. The  authors would like to thank T. Kobayashi for much insight  and inspiration on the problems considered here. Also, we thank Michel Duflo, Birgit Speh, Yosihiki Oshima and Jan Frahm for conversations on the subject. The second author   thanks  Aarhus University for generous support, its hospitality and  excellent  working conditions during the preparation of this paper.

\providecommand{\MR}{\relax\ifhmode\unskip\space\fi MR }
\providecommand{\MRhref}[2]{%
\href{http://www.ams.org/mathscinet-getitem?mr=#1}{#2}
}
\providecommand{\href}[2]{#2}

\end{document}